\documentclass[12pt]{amsart}

\usepackage{latexsym,amsfonts,amsmath,amssymb,amsthm,url,graphics,psfrag,amscd,stmaryrd}
\usepackage[english]{babel}
\usepackage[latin1]{inputenc}
\usepackage[T1]{fontenc}
\usepackage{graphics}
\usepackage{graphicx}
\usepackage{color}

\paperheight=29.7cm
 \paperwidth=21cm
 \setlength\textwidth{16cm}
\hoffset=-1in
  \setlength\marginparsep{0.5cm}
 \setlength\marginparwidth{0.5cm}
  \setlength\marginparpush{0.5cm}
  \setlength\evensidemargin{2.5cm}
 \setlength\oddsidemargin{2.5cm}
  \setlength\topmargin{2.4cm}
  \setlength\headheight{0.5cm}
  \setlength\headsep{0.5cm}
  \voffset=-1in
\setlength\textheight{23.5cm}

\newcommand{\N}{\mathbb{N}}

\newcommand{\Z}{\mathbb{Z}}

\newcommand{\pp}{\mathbb{P}}

\newcommand{\kA}{\mathcal{A}}
\newcommand{\kB}{\mathcal{B}}
\newcommand{\kC}{\mathcal{C}}

\newcommand{\kO}{\mathcal{O}}

\newcommand{\kF}{\mathcal{F}}
\newcommand{\kG}{\mathcal{G}}

\newcommand{\kM}{\mathcal{M}}
\newcommand{\kE}{\mathcal{E}}
\newcommand{\kH}{\mathcal{H}}

\newcommand{\kN}{\mathcal{N}}

\newcommand{\kL}{\mathcal{L}}

\newcommand{\pn}{\mathbb{P} }

\newcommand{\en}{\mathbb{E} }

\newcommand{\cov}{\textrm{Cov}}
\newcommand{\bin}{\mathcal{B}}

\newcommand{\lin}{\left[\kern-0.15em\left[}
\newcommand{\rin} {\right]\kern-0.15em\right]}
\newcommand{\linf}{[\kern-0.15em [}
\newcommand{\rinf} {]\kern-0.15em ]}
\newcommand{\ilin}{\left]\kern-0.15em\left]}
\newcommand{\irin} {\right[\kern-0.15em\right[}

\newtheorem{lem}{Lemma}[section]
\newtheorem{rema}[lem]{Remark}
\newtheorem{prop}[lem]{Proposition}
\newtheorem{theo}[lem]{Theorem}

\title[Contact process on conf. model with infinite mean degree]
       {\bf Metastability for the contact process on the configuration model with infinite mean degree}

\author{Van Hao Can}
\author{Bruno Schapira}
\address{Aix Marseille Universit\'e, CNRS, Centrale Marseille, I2M, UMR 7373, 13453 Marseille, France}
\email{cvhao89@gmail.com}
\email{bruno.schapira@univ-amu.fr}

\keywords{Contact process; random graphs; configuration model; metastability.} 
\subjclass[2010]{82C22; 60K35; 05C80.}

\begin{document}

\maketitle
\begin{abstract}
We study the contact process on the configuration model with a power law degree distribution, 
when the exponent is smaller than or equal to two. 
We prove that the extinction time grows exponentially fast with the size of the graph 
and prove two metastability results.  First the extinction time divided by its mean converges in distribution toward 
an exponential random variable with mean one, when the size of the graph tends to infinity. Moreover, the density of infected sites taken at exponential times converges in probability to a constant. This extends previous results in the case of an exponent larger than $2$ obtained in \cite{CD,MMVY,MVY}.  
\end{abstract}

\section{Introduction}
In this paper we will prove metastability results for the contact process on the configuration model 
with a power-law degree distribution, extending the main results of \cite{CD, MVY,MMVY} to the case when the exponent of the power-law is smaller than or equal to $2$. 

\vspace{0.2cm}
The contact process is one of the most studied interacting particle systems, see in particular Liggett's book \cite{L}, 
and is also often interpreted as a model for the spread of a virus in a population or a network.  
Mathematically, it can be defined as follows: given a countable locally finite graph $G$ and $\lambda >0$,  
the contact process on $G$ with infection rate $\lambda$ is a continuous-time Markov process $(\xi_t)_{t\geq 0}$ on $\{0,1\}^V$, 
with $V$ the vertex set of $G$. The elements of $V$, also called sites, are regarded as individuals which are either 
infected (state $1$) or healthy (state $0$). By considering $\xi_t$ as a subset of $V$ via $\xi_t \equiv \{v: \xi_t(v)=1\}$, 
the transition rates are given by
\begin{align*}
\xi_t \rightarrow \xi_t \setminus \{v\} & \textrm{ for $v \in \xi_t$ at rate $1,$ and } \\
\xi_t \rightarrow \xi_t \cup \{v\} & \textrm{ for $v \not \in \xi_t$ at rate }  \lambda \, \textrm{deg}_{\xi_t}(v),
\end{align*}
where $\textrm{deg}_{\xi_t}(v)$ denotes the number of edges between $v$ and another infected site  
(note that if $G$ is a simple graph, in the sense that there is only one edge between any pair of vertices, then $\textrm{deg}_{\xi_t}(v)$ is just the number of infected neighbors of $v$ at time $t$). 
 
\vspace{0.2cm}
Since the empty configuration is an absorbing state (and the only one), a quantity of particular interest is the extinction time, defined by 
$$\tau_G = \inf \{t: \xi_t = \varnothing\}.$$
Exploiting the fact that the contact process is stochastically increasing in $\lambda$, one can show that some graphs exhibit a 
nontrivial phase transition, regarding the finiteness of $\tau_G$. 
For instance on $\Z^d$, there exists a critical value $\lambda_c(d)>0$, such that for 
$\lambda\le \lambda_c(d)$, $\tau_{\Z^d}$ is a.s. finite (when $\xi_0$ is finite), 
whereas when $\lambda>\lambda_c(d)$, it has positive probability to be infinite (even when starting from a single vertex),  
see \cite{L} Section I.2 for a proof of this and references.

Here we will only consider finite graphs, in which case the extinction time is always almost surely finite. However, it is still 
interesting to understand its order of magnitude as a function of the size of the graph. 
For instance a striking phenomenon occurs on finite boxes 
$\llbracket 0,n \rrbracket^d$: one can show that with high probability (w.h.p.), if the process starts from full occupancy, the extinction time is of logarithmic order when 
$\lambda<\lambda_c(d)$, of polynomial order when $\lambda=\lambda_c(d)$ (at least in dimension one), 
and of exponential order when 
$\lambda>\lambda_c(d)$, see \cite{L} Section I.3 for a discussion on this and a complete list of references.

In fact such result seems intimately related to the fact that finite boxes converge to $\Z^d$ when $n$ tends to infinity, 
in the sense of the Benjamini--Schramm's local weak convergence of graphs \cite{BS}. 
If a rigorous connection between the two phenomena still remains conjectural at the moment, 
recently many examples gave substancial credit to this conjecture, see for instance  \cite{CD, CMMV, MV, MMVY}.   

\vspace{0.2cm} The case of the configuration model (a definition will be given later) is particularly interesting in this regard, 
at least when the degree distribution has finite mean. Indeed in this case it is not difficult to see 
that when the number of vertices increases, the sequence of graphs converges toward a Galton Watson tree.  
In \cite{CD} Chatterjee and Durret have shown that when the degree distribution has a power law (with exponent larger than two), 
the extinction time grows faster than any stretched exponential (in the number of vertices), 
which can be interpreted in saying that the critical value is zero for these graphs 
(invalidating thereby some physicists predictions). Since on the other hand one can show 
that the critical value on the limiting Galton Watson tree is also zero 
(the process has always a positive probability to survive for any $\lambda>0$), 
the conjecture mentioned above is satisfied for this class of examples. It is worth noting that the case of degree distributions 
with lighter tails than polynomial seems much harder 
(in particular understanding the case of Poisson distributions would be of great interest due to its connection with 
Erd\"os-R\'enyi random graphs).

But the configuration model is also interesting for another reason, highlighted in \cite{CD}: 
when the degree sequence has a power law, the contact process exhibits a metastable behaviour. 
This was first proved under a finite second moment hypothesis (equivalently for exponents larger than three) in \cite{CD}, and the result has been later strengthened 
and extended to exponents 
larger than two in \cite{MVY, MMVY}. To be more precise now, in \cite{CD} the authors proved that when the degree distribution has a power law with finite second moment, then 
 $$\pn \left( c\lambda^{1+(a-2)(2- \delta)} \leq \frac{|\xi_{\exp(\sqrt n)}|}{n} \leq C \lambda^{1+(a-2)(1- \delta)} \right) \rightarrow 1,$$
for some positive constants $c$ and $C$ (independent of $\lambda$), 
where $\xi$ denotes the contact process starting from full occupancy.    
In \cite{MMVY} the authors have shown that when the degree distribution has finite mean (and a power law), 
the extinction time is w.h.p. exponential in the size of the graph (when starting from full occupancy), 
and combined with the results of \cite{MVY}, 
one obtains that 
 $$ \mathbb{P} \left(   c \rho_{a}(\lambda) \leq \frac{|\xi_{t_n}|}{n} \leq  C \rho_{a}(\lambda)   \right) \rightarrow 1,$$
for any sequence $(t_n)$ satisfying $t_n \to \infty$ and $t_n \le \exp(cn)$, where  
\begin{displaymath}
\rho_{a}(\lambda) = \left \{ \begin{array}{ll}
\lambda^{\frac{1}{3-a}} & \textrm{ if } 2 < a \leq 5/2\\
\frac{\lambda ^{2a-3}}{\log ^{a-2} (\frac{1}{\lambda})} & \textrm{ if } 5/2 < a \leq 3\\
\frac{\lambda ^{2a-3}}{\log ^{2a-4} (\frac{1}{\lambda})} & \textrm{ if }  a > 3.
\end{array} \right.
\end{displaymath}

\noindent In this paper we complete this picture by studying the case of power laws with exponents  $ a\in (1,2]$. 
To simplify the discussion and some proofs we have chosen to consider mainly only two special choices of degree distribution. 
Namely we assume that it is given either by 
\begin{equation} \label{pnaj}
   p_{n,a}(j)=   c_{n,a}\, j^{-a}\qquad \textrm{for }j=1,\ldots,n, 
\end{equation}
for graphs of size $n$, or by 
\begin{equation}
\label{paj}
p_a(j) = c_{\infty,a}\, j^{-a} \qquad \textrm{for }j\ge 1,
\end{equation}
independently of the size of the graph, where $(c_{n,a})$ and $c_{\infty,a}$ are normalizing constants. 
However, at the end of the paper we also present straightforward extensions of our results to more general distributions, 
see Section \ref{secext} for more details. Our first main result in this setting is the following:

\begin{theo} \label{td}
For each $n$, let $G_n$ be the configuration model with $n$ vertices and degree distribution given either by \eqref{pnaj} or \eqref{paj} with $a\in (1,2]$. Consider the contact process $(\xi_t)_{t\ge 0}$ with infection rate $\lambda>0$ starting from full occupancy on $G_n$.
Then there is some positive constant $c = c(\lambda)$, such that the following convergence in probability holds:
\begin{equation} \label{etd}
\frac{|\xi_{t_n}|}{n}\quad  \mathop{\longrightarrow}^{(\pn)}_{n\to\infty} \quad    \rho_{a}(\lambda), 
\end{equation}
for any sequence $(t_n)$ sastifying  $ t_n  \to \infty$ and $t_n \leq \exp(c n)$, where
\begin{equation}
\rho_{a}(\lambda) = \sum \limits_{j=1}^{\infty} \frac{j\lambda}{j\lambda+1} p_a(j).
\end{equation}
\end{theo}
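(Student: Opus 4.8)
The plan is to pass, via the self-duality of the contact process, from the density $|\xi_{t_n}|/n$ to an average of single‑site survival probabilities, and then to exploit the fact that for $a\le 2$ the degree sequence of the configuration model is heavy enough that a half‑edge chosen at random points to a vertex of arbitrarily large degree with probability tending to one. Concretely: since the contact process is self‑dual (see \cite{L}) and $\xi_0$ is full occupancy, for every vertex $v$ one has $\pp(v\in\xi_{t_n})=\pp(\tau^v>t_n)$, where $\tau^v$ is the extinction time of the process started from the single infected vertex $v$; hence $\E|\xi_{t_n}|=\sum_v\pp(\tau^v>t_n)$, and it suffices to show that this sum divided by $n$ tends to $\rho_a(\lambda)$ and that $|\xi_{t_n}|$ concentrates around its mean. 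For the upper bound, started from $\{v\}$ the first ring among the Poisson clocks attached to $v$ and to its incident edges is a recovery of $v$ with probability $1/(\lambda\deg(v)+1)$ — in which case the process is already extinct — and it occurs after time $t_n$ with probability $e^{-(\lambda\deg(v)+1)t_n}\le e^{-t_n}$; therefore $\pp(\tau^v>t_n)\le \frac{\lambda\deg(v)}{\lambda\deg(v)+1}+e^{-t_n}$, so $\frac1n\E|\xi_{t_n}|\le \frac1n\sum_v\frac{\lambda\deg(v)}{\lambda\deg(v)+1}+e^{-t_n}$, whose first term converges to $\rho_a(\lambda)$ by the law of large numbers for the empirical degree distribution (the summands being bounded by $1$) and whose second term vanishes because $t_n\to\infty$.

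For the lower bound, $\pp(\tau^v>t_n)$ is bounded below by the probability that $v$ first infects a neighbour $w$ (probability exactly $\frac{\lambda\deg(v)}{\lambda\deg(v)+1}$), that $\deg(w)\ge K$ for a large constant $K=K(\lambda)$, and that the infection then survives up to time $t_n$. The neighbour reached along a given half‑edge of $v$ has an (almost) size‑biased degree; since $\sum_j j\,p_a(j)=\infty$ for $a\le 2$, the probability that this neighbour has degree at least $K$ tends to $1$ as $n\to\infty$ for every fixed $K$, and uniformly in $v$ — because the total number of half‑edges is of larger order than $n$, while the half‑edges attached to vertices of degree $<K$ number only $O_K(n)$ — so averaging this probability over $v$ with weights $\frac{\lambda\deg(v)}{\lambda\deg(v)+1}\le 1$ costs only $o(1)$. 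It then remains to show that once a hub $w$ is infected, with probability at least $1-\delta(K)$, where $\delta(K)\to 0$, the infection reaches the metastable regime and stays alive beyond $t_n\le e^{cn}$: an infected hub behaves like the contact process on a star and, before its centre would first recover, infects a number of leaves comparable to $\lambda\deg(w)$, so with high probability it settles into a configuration with a positive density of infected leaves that it maintains for a time exponential in $\deg(w)$; during that time it infects many of its neighbours, a proportion tending to $1$ of which are themselves hubs, and the infection thereby invades the entire hub set and survives there for a time exponential in its cardinality, which is of order $n$. Combining the two bounds gives $(1-\varepsilon)\rho_a(\lambda)\le \frac1n\E|\xi_{t_n}|\le \rho_a(\lambda)+o(1)$ for every $\varepsilon>0$ and $n$ large.

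Finally, to upgrade convergence of the mean to convergence in probability it suffices to show $\mathrm{Var}\,|\xi_{t_n}|=o(n^2)$, which reduces, via duality, to showing that $\cov(\mathbf 1_{u\in\xi_{t_n}},\mathbf 1_{v\in\xi_{t_n}})$ is $o(1)$ for all but an $o(n^2)$‑proportion of pairs $(u,v)$: this covariance is governed by the joint behaviour of the two dual processes issued from $u$ and from $v$, which are close to independent since each escapes to the hub set within a bounded number of steps using disjoint portions of the graphical construction, after which both survive with high probability, so that $\pp(u,v\in\xi_{t_n})\approx\pp(u\in\xi_{t_n})\pp(v\in\xi_{t_n})$.

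I expect the genuinely delicate part to be the spreading statements used in the lower bound, namely that from a single infected hub one reaches the metastable state and that the metastable state survives for a time exponential in $n$, together with the quantitative control uniform over hubs of degree $\ge K$; this is the analogue for $a\le 2$ of the extinction‑time estimates of \cite{MMVY}, and although the hub set is now much denser than for $a>2$, which makes hub‑to‑hub transmission easier, the divergence of the moments of the degree distribution forces care throughout the estimates, in particular in the exchange between the local equilibrium computation giving $\rho_a(\lambda)$ and the global survival mechanism.
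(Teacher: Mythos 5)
Your high-level architecture matches the paper's exactly: self-duality to reduce to single-site survival, an upper bound from the observation that the first ring among the clocks attached to $v$ and its stubs is a recovery of $v$ with probability $1/(\lambda D_v+1)$, a lower bound by showing that conditionally on $v$ infecting a neighbour before recovering the process then survives until $t_n$, and a second-moment / covariance argument to upgrade from expectation to probability. The paper's version of your upper bound defines $Y_{n,v}=\mathbf 1(C_{n,v})$ with $C_{n,v}=\{\min_{e:v\to\cdot}\sigma(e)<\sigma(v)\}$, notes $\en(Y_{n,v})=\rho_{n,a}(\lambda)$ exactly, and uses that the $Y_{n,v}$ are i.i.d.\ by construction of the graphical representation; your version via the law of large numbers on the empirical degree distribution is equivalent. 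The paper's covariance argument is cleaner than your sketch: rather than reasoning about near-independence of the two dual processes, it sandwiches $\{s_v=0\}\cap C_{n,v}\subset A_{n,v}\subset C_{n,v}$ (with $A_{n,v}$ the event that $v$ infects a neighbour before recovery) and uses the exact i.i.d.-ness of the $C_{n,v}$'s together with $\pn(B_{n,v}\mid A_{n,v})=1-o(1)$. Your version would need justification of ``disjoint portions of the graphical construction,'' which is false as stated since both duals end up using the same Poisson processes once they reach the hub region.

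The genuine gap is in the lower bound, precisely at the step you yourself flag as delicate: showing that an infected high-degree vertex survives until $t_n\le e^{cn}$ with probability $1-o(1)$. Your sketch claims the infection ``invades the entire hub set and survives there for a time exponential in its cardinality, which is of order $n$,'' but this is not a working mechanism. If ``hub'' means degree $\ge K$ for a fixed $K$, then a single such star survives only for a time of order a constant (not growing in $n$), and the known lit-star survival bound is exponential in the \emph{degree} of a star centre, not in the \emph{number} of hubs; moreover one would then need to show that $\Theta(n)$ such low-degree stars, suitably interconnected, support an exponential-in-$n$ extinction time, which does not follow from the star estimate. The paper resolves this by proving two ingredients that are entirely missing from your proposal. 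First (Section 3) it shows that $G_n$ contains w.h.p.\ a two-step star graph $S(k;d_1,\dots,d_k)$ in which each $d_i\ge C\log n/\lambda^2$ and $d_1+\cdots+d_k\asymp n$; crucially, the star centres are very high-degree vertices (e.g.\ degree $\ge n/2$ or $\ge n^{3/4}$ depending on the regime), not merely vertices of bounded degree $\ge K$, and the total size, not the count, is $\Theta(n)$. Second (Proposition~\ref{psta}) it proves, via the Chatterjee--Durrett lit-star machinery, that the contact process started from a single centre of such a two-step star graph survives for time $\exp(c\lambda^2 n)$ w.h.p.\ --- the key point being a careful union-of-stars argument that a currently lit centre reignites the other centres before extinction, giving a survival probability compounding across stars. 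Your reference to MMVY as the analogue to emulate is also misleading here: for $a\le 2$ the mean degree is infinite (or unbounded), so $G_n$ does not converge locally and the MVY/MMVY route through an infinite limit tree is unavailable; the argument has to be run directly on $G_n$, which is precisely why the two-step star graph construction and Proposition~\ref{psta} are the technical heart of the paper. Until you supply a proof of a statement of this strength, the lower bound is unsubstantiated.
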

\noindent Note that as $\lambda \rightarrow 0$, 
\begin{displaymath}
\rho_{a}(\lambda)\asymp \left \{ \begin{array}{ll}
\lambda^{a-1} & \textrm{ if } 1 < a < 2 \\
\lambda \log \frac{1}{\lambda} & \textrm{ if } a =2,
\end{array} \right.
\end{displaymath}
which in particular shows that the guess of Chatterjee and Durrett \cite{CD} that $\rho_a(\lambda)$ should be   
$\kO(\lambda)$ was not correct. 

\vspace{0.2cm}
Now let us make some comments on the proof of this result. One first remark is that one of the main ingredients 
in the approach of \cite{MVY} completely breaks down when the degree distribution has infinite mean 
(or when its mean is unbounded like in the case \eqref{pnaj}), since in this case 
the sequence of graphs $(G_n)$ does not locally converge anymore. 
In particular we cannot transpose the analysis of the contact process on $G_n$ (starting from a single vertex) 
into an analysis on an infinite limit graph. So instead we have to work directly on the graph $G_n$. 
In fact we will show that it contains w.h.p. a certain number of disjoint star graphs 
(i.e. graphs with one central vertex and all the others connected to the central vertex), 
which are all connected, and whose total size is of order $n$ (the size of $G_n$). 
It is well known that the contact process on a star graph remains active w.h.p. for a time exponential in the size of the graph. 
So our main contribution here is to show that when we connect disjoint star graphs together, 
the process survives w.h.p. for a time which is exponential in the total size of these graphs.  
To this end we use the machinery introduced in \cite{CD}, with their notion of lit stars. We refer to  
Proposition \ref{psta} and its proof for more details. 
Now it is interesting to notice that while this strategy works in all the cases we consider, 
the details of the arguments strongly depend on whether $a<2$ or $a=2$, and on the choice of the degree distribution. 
This explains why we found interesting to present the proof for the two examples \eqref{pnaj} and \eqref{paj}  
(note that these distributions were also considered in \cite{VVHZ}, 
where it was already proved that the distance between two randomly chosen vertices was a.s. equal either to two or three).

Then to obtain the asymptotic expression for the density \eqref{etd}, the point is to use the 
self-duality of the contact process. This allows to transpose the problem on the density of infected sites 
in terms of survival of the process starting from a single vertex. But starting from a single vertex, 
the process has a real chance to survive for a long time only if it infects one of its neighbors before extinction. 
Moreover, when it does, one can show that w.h.p. it immediately infects one of the star graphs mentioned above, 
and therefore the virus survives w.h.p. for a time at least $t_n$. 
The conclusion of the theorem follows once we observe 
that the probability to infect a neighbor before extinction starting from any vertex is exactly equal to $\rho_a(\lambda)$ in case \eqref{paj} and to   
\begin{equation} \label{lm}
\rho_{n,a}(\lambda) := \sum \limits _{j=1}^n \frac{j \lambda}{j \lambda +1} p_{n,a}(j), 
\end{equation}
in case \eqref{pnaj}, which converges to $ \rho_a(\lambda)$, as $n \rightarrow \infty$.

\vspace{0.2cm}
\noindent Our second result is often considered in the literature as another (weaker) expression of the metastability:  

\begin{theo}
\label{propexp}
Assume that the degree distribution on $G_n$ is given either by \eqref{pnaj} or \eqref{paj} with $a\in (1,2]$, 
and let $\tau_n$ be the extinction time of the contact process with infection rate $\lambda>0$ starting from full occupancy. Then 
\begin{itemize} 
\item[(i)] the following convergence in law holds
$$\frac{\tau_n}{\en(\tau_n)}\ \mathop{\longrightarrow}^{(\mathcal L)}_{n\to \infty}  \  \kE(1),$$
with $\kE(1)$ an exponential random variable with mean one,  \\ 
\item[(ii)] there exists a constant $C>0$, such that $\en(\tau_n) \le \exp(Cn)$, for all $n\ge 1$. 
\end{itemize}
\end{theo}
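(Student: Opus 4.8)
These two assertions are essentially independent; I would prove (ii) first, as it admits a short self-contained argument, and then (i) along the by-now standard route for exponential limits of extinction times. For (ii), my plan is to establish, for \emph{every} realization of $G_n$, the quenched bound $\en(\tau_n\mid G_n)\le\exp\big(\sum_{v\in V_n}\log(2(1+\lambda\,\mathrm{deg}(v)))\big)$. Working in the graphical representation, with independent Poisson clocks for recoveries (one per vertex) and for infections (one per ordered pair of neighbours), for each vertex $v$ let $E_v$ be the event that $v$ carries at least one recovery mark in $[0,1]$ and that no infection arrow points into $v$ strictly after its last recovery mark. On $\bigcap_v E_v$ one has $\xi_1=\varnothing$, since after its last recovery a vertex receives no incoming infection and so stays healthy. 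The event $E_v$ depends only on the recovery clock of $v$ and on the infection clocks of the ordered pairs $(u,v)$, $u\sim v$; these families are pairwise disjoint as $v$ varies, so $(E_v)_{v\in V_n}$ are mutually independent, and an elementary computation gives $\pn(E_v)=\frac{1-e^{-(1+\lambda\,\mathrm{deg}(v))}}{1+\lambda\,\mathrm{deg}(v)}\ge\frac1{2(1+\lambda\,\mathrm{deg}(v))}$. Hence $p:=\pn_{\mathbf 1}(\tau_n\le 1)\ge\prod_v\pn(E_v)>0$, and since the contact process is attractive the Markov property applied at integer times yields $\pn_{\mathbf 1}(\tau_n>k)\le(1-p)^k$, so $\en(\tau_n\mid G_n)\le 1/p$. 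Finally $\mathrm{deg}(v)\ge1$, and because $a>1$ we have $\en[\log\mathrm{deg}]<\infty$ for both \eqref{pnaj} and \eqref{paj}; thus $\sum_v\log(1+\lambda\,\mathrm{deg}(v))\le Cn$ with probability tending to one by the law of large numbers (the parity constraint of the configuration model is a routine adjustment), and (ii) follows.

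For (i), Theorem \ref{td} applied with $t_n=e^{cn}$ gives $\pn_{\mathbf 1}(\tau_n>e^{cn})\to1$, so $\en(\tau_n)\ge\tfrac12 e^{cn}$ eventually and any slowly growing scale, say $s_n=(\log n)^2$, satisfies $s_n=o(\en(\tau_n))$. The heart of the matter is a relaxation estimate. Let $\kM_n$ denote the metastable event that each of the disjoint connected star graphs provided by Proposition \ref{psta} contains an infected vertex. I would prove that there is $\varepsilon_n\to0$ such that, for every non-empty $A\subseteq V_n$ that is ``large'' in the sense of meeting each of those stars (in particular $A=V_n$),
\[
\pn_A(\tau_n>s_n)\ge 1-\varepsilon_n\quad\text{and}\quad\pn_A(\xi_{s_n}\in\kM_n\mid\tau_n>s_n)\ge1-\varepsilon_n,
\]
and likewise that from $\mathbf 1$ the process enters $\kM_n$ within time $s_n$ with probability at least $1-\varepsilon_n$. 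This should follow from the same tools as Theorem \ref{td}: the contact process on each single star stays active for a time $e^{cn}$ with overwhelming probability, and the monotone (basic) coupling lets one dominate and couple the evolution from any such $A$ with the one from $\mathbf 1$.

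Granting this, (i) follows from a classical criterion for exponential limits, used in the same context in \cite{CD,MVY,MMVY}: cutting $[0,\infty)$ into consecutive blocks of length $s_n$, the relaxation estimate together with the Markov property shows that, conditionally on survival, the process lies in $\kM_n$ at the start of each block up to an error $\varepsilon_n$, so the probability of extinction during a block given survival so far is a common value $q_n\to0$ up to a factor $1+o(1)$; hence $\tau_n/s_n$ is asymptotically geometric with parameter $q_n$, giving $q_n\tau_n/s_n\Rightarrow\kE(1)$. To recenter by $\en(\tau_n)$, let $a_n$ be defined by $\pn_{\mathbf 1}(\tau_n>a_n)=1/e$ (possible since $t\mapsto\pn_{\mathbf 1}(\tau_n>t)$ is continuous); the restart bound of part (ii), now applied with step $a_n$, gives $\pn_{\mathbf 1}(\tau_n>ka_n)\le e^{-k}$, so the family $(\tau_n/a_n)$ is uniformly integrable; combined with the weak convergence $\tau_n/a_n\Rightarrow\kE(1)$ this forces $\en(\tau_n)\sim a_n$, whence $\tau_n/\en(\tau_n)\Rightarrow\kE(1)$.

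The main obstacle is the uniform relaxation estimate: one must show that, conditioned on survival, the contact process on $G_n$ is essentially always in the same metastable profile, irrespective of its initial configuration and of how long it has been running, so that extinction is driven by a single effective exponential clock and the error $\varepsilon_n$ is uniform in time. Obtaining a control of the process precise enough for this — in particular ruling out that it lingers in atypical configurations — is exactly where the disjoint-lit-star structure of Proposition \ref{psta} is needed, and it is here that the argument genuinely depends on whether $a<2$ or $a=2$ and on the choice \eqref{pnaj} versus \eqref{paj}.
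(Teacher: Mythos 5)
Your argument for part (ii) is correct and is in substance the paper's. The paper bounds $\pn(\tau_n\le\min_v\sigma(v)\mid G_n)$ from below by $\prod_v 1/(1+\lambda D_v)$ (the product over vertices of the probability that each one recovers before ever sending an infection), and then applies the quenched restart bound of Lemma~\ref{mintaun} with $s=n^2$. Your events $E_v$ and the bound $\prod_v 1/(2(1+\lambda\,\mathrm{deg}\,v))$ are an equivalent packaging of the same idea, and the law-of-large-numbers control of $\sum_v\log(1+\lambda D_v)$ is the same last step, which the paper spells out via $\mathcal{E}_n$ and the bucket counts $n_j$.

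For part (i) you take a genuinely different route, and it has a gap that you have in fact flagged yourself. You propose a block/renewal scheme: cut time into blocks of length $s_n$, argue that conditionally on survival the configuration is in the metastable set $\kM_n$ at the start of each block up to an error $\varepsilon_n$, and deduce that $\tau_n/s_n$ is asymptotically geometric. The paper instead invokes Mountford's criterion (Proposition 1.2 of \cite{M}, restated here as Proposition~\ref{pcel}): it suffices to produce $a_n=o(\en\,\tau_n)$ with $\sup_v\pn(\xi^v_{a_n}\ne\xi_{a_n},\ \xi^v_{a_n}\ne\varnothing)=o(1)$, i.e.\ the process started from any single vertex either dies by time $a_n$ or by then coincides with the process from full occupancy. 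That coupling condition is verified by duality and the ``lit'' star around a high-degree vertex $u$: the forward process from $v$ and the dual from $w$ each light $u$, and two lit configurations on $S(u)$ are joined by an infection path inside $S(u)$. The only graph input needed is $D_{n,\max}\gg d_n\vee\log n$, which is why the paper also proves the diameter bound of Lemma~\ref{dmg} and checks that the extinction times on $G_n$ and on the giant component $G_n^0$ agree w.h.p.

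The missing piece in your sketch is the relaxation estimate itself, and it is not a cosmetic omission. As stated it does not close the renewal argument: knowing that $\xi_{ks_n}\in\kM_n$ given survival does not control the conditional \emph{law} of $\xi_{ks_n}$, and configurations in $\kM_n$ can have materially different one-block survival probabilities; moreover the conditioning on $\{\tau_n>ks_n\}$ biases that law in a way you cannot compare across $k$, and any non-zero error $\varepsilon_n$ would have to be propagated over roughly $e^{cn}/s_n$ blocks. What is actually needed is a coupling statement — after time $s_n$ the process from any surviving initial set agrees with the process from full occupancy — and this is exactly what Mountford's criterion packages and what the paper's dual lit-star argument delivers. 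Your closing recentering via uniform integrability (defining $a_n$ by $\pn_{\mathbf 1}(\tau_n>a_n)=e^{-1}$ and using the restart bound to get $\pn(\tau_n>ka_n)\le e^{-k}$) is sound and a nice observation, but it becomes unnecessary once the coupling criterion is used, since that already gives the limit normalised by $\en(\tau_n)$.
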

In particular this result shows that Theorem \ref{td} cannot be extended to sequences $(t_n)$ growing faster than exponentially. 
In fact one can prove (see Remark \ref{tn}) that Theorem \ref{td} holds true for any constant $c$ smaller  
than $\liminf (1/n) \log \en(\tau_n)$, and cannot be extended above this limit. This of course raises the question  
of knowing if the sequence $(1/n)\log \en(\tau_n)$ admits a limit or not. Such result has been obtained in a number 
of contexts, for instance in \cite{MMVY} or on finite boxes $\llbracket 0,n\rrbracket^d$ (see \cite{L} Section I.3), 
but we could not obtain it in our setting. One reason, which for instance prevents us to apply the strategy of \cite{MMVY}, 
is that there does not seem to be a natural way to embed $G_n$ into $G_{n+1}$ (or another configuration model with larger size).  

\vspace{0.2cm}
Our method for proving Theorem \ref{propexp} (i) is rather general and only requires some simple hypothesis on the maximal degree and the diameter of the graph, which is satisfied in most scale-free random graphs models, like 
the configuration model with power law distribution 
having a finite mean (with the same hypothesis as in \cite{CD,MVY}), or the preferential attachment graph (see \cite{C}).  
We refer the reader to Proposition \ref{pcel} and Remark \ref{remexp} for more details.

\vspace{0.2cm} Let us also stress the fact that (ii) would be well known if the graph had order $n$ edges, as when 
the degrees have finite mean, but here it is not the case, so we have to use a more specific argument, see Section 6.

\vspace{0.2cm}
Now the paper is organized as follows. In the next section, we recall the well-known and very usefull graphical construction of 
the contact process. We also give a definition of the configuration model, fix some notation, and prove preliminary  
results on the graph structure. In Section 3, we prove that $G_n$ contains w.h.p. a subgraph, called two-step star graph, 
which is made of several star graphs connected together, whose total size is comparable to the size of the whole graph. 
We refer to this section for a precise statement, which in fact depends on which case we consider ($a<2$ or $a=2$,  
and distribution \eqref{pnaj} or \eqref{paj}). 
In Section 4 we show that once a vertex (with high degree) of the two-step star graph is infected, the virus survives 
for an exponential time. Then we prove Theorem \ref{td} and \ref{propexp} in Sections 5 and 6 respectively. 
Finally in the last section we discuss several extenstions of our results to more general degree distributions.

\section{Preliminaries}
\subsection{Graphical construction of the contact process.}
We briefly recall here the graphical construction of the contact process (see more in Liggett's book \cite{L}).

Fix $\lambda>0$ and an oriented graph $G$ (recall that a non-oriented graph can also be seen as oriented by associating 
to each edge two oriented edges). Then assign independent Poisson point processes $\mathcal{N}_v$ of rate $1$ to each 
vertex $v \in V$ and $\mathcal{N}_{e}$ of rate $\lambda$ to each  oriented edge $e$. 
Set also $\mathcal{N}_{(v,w)}:=\cup_{e : v\to w}\, \mathcal{N}_e$,  
for each ordered pair $(v,w)$ of vertices, where the notation $e: v\to w$ means that the oriented edge $e$ goes from $v$ to $w$.

We say that there is an infection path from $(v,s)$ to $(w,t)$, and we denote it by 
\begin{eqnarray}
\label{vswt}
(v,s)\longleftrightarrow (w,t), 
\end{eqnarray}
either if $s=t$ and $v=w$, or if $s<t$ and if there is a sequence of times $s=s_0< s_1<\ldots<s_l<s_{l+1}=t,$ and a sequence of vertices $v=v_0,v_1,\ldots,v_l=w$ such that for every $i=1,\ldots,l$ 
\begin{displaymath}
\left \{ \begin{array}{ll}
 s_i \in \mathcal{N}_{(v_{i-1},v_i)} \quad \textrm{ and }\\
 \mathcal{N}_{v_i} \cap [s_i, s_{i+1}] = \varnothing.
\end{array} \right.
\end{displaymath}
Furthermore, for any $A$, $B$ two subsets of $V_n$ and $I$, $J$ two subsets of $[0,\infty)$, we write 
$$A\times I \longleftrightarrow B\times J,$$
if there exists $v\in A$, $w\in B$, $s\in I$ and $t\in J$, such that \eqref{vswt} holds. 
Then for any $A\subset V_n$, the contact process with initial configuration $A$ is defined by  
$$\xi^A_t :=\left\{v \in V_n:  A\times\{0\}\longleftrightarrow (v,t)\right\},$$
for all $t\ge 0$.  
It is well known that $(\xi^A_t)_{t \geq 0}$ has the same distribution as the process defined in the introduction.
Just note that in our definition, the Poisson processes associated to edges forming loops play no role 
(we could in particular remove them), but this definition will be convenient at one place of the proof (when we will use that the $Y_{n,v}$'s are i.i.d. in Subsection \ref{subsectionYnv}). 
We define next $\tau_n^A$ as the extinction time of the contact process starting from $A$. 
However, we will sometimes drop the superscript $A$ from the notation when it will be clear from the context. We 
will also simply write $\xi^v_t$ or $\tau_n^v$ when $A=\{v\}$. 

\vspace{0.3cm}
\noindent Finally we introduce the following related notation:
\begin{equation} \label{sm}
\sigma(v)= \inf \{s \geq 0: s \in \mathcal{N}_v \},
\end{equation}
and 
\begin{equation} \label{sme}
\sigma(e)= \inf \{s \geq 0: s \in \mathcal{N}_e \},
\end{equation}
for any vertex $v$ and oriented edge $e$. 

\subsection{Configuration model and notation.}
The configuration model is a well known model of random graph with prescribed degree distribution, see for instance \cite{V}. 
In fact here we will consider a sequence $(G_n)$ of such graphs. 
To define it, start for each $n$ with a vertex set $V_n$ of cardinality 
$n$ and construct the edge set as follows. Consider a sequence of i.i.d. integer valued random variables $(D_v)_{v\in V_n}$
(whose law might depend on $n$) and assume that $L_n =\sum_v D_v$ is even (if not increase one of the $D_v$'s by $1$, 
which makes no difference in what follows). For each vertex $v$, start with $D_v$ half-edges (sometimes called stubs) 
incident to $v$. Then match uniformly at random all these stubs by pairs. Once paired two stubs form an edge of the graph. 
Note that the random graph we obtain may contain multiple edges (i.e. edges between the same two vertices), 
or loops (edges whose two extremities are the same vertex). 

In fact one can also define $G_n$ by matching the stubs sequentially. This equivalent construction will be used in particular in Lemma \ref{lta1}, \ref{2a<2} and  \ref{ltb1}, so let us describe it now. As with the previous construction we start with a sequence of degrees $(D_v)_{v\in V_n}$, and for each $v\in V_n$, $D_v$ half-edges emanating from $v$. Then we denote by $\kH$ the set of all the half-edges. Select one of them $h_1$ arbitrarily and then choose a half-edge $h_2$ uniformly from $\kH \setminus \{h_1\}$, and match $h_1$ and $h_2$ to form an edge. Next, select arbitrarily another half-edge $h_3$ from $\kH \setminus \{h_1, h_2\}$  and match it to another $h_4$ uniformly chosen from  $\kH \setminus \{h_1, h_2, h_3\}$. Then continue this procedure until there are no more half-edges. It is possible to show that the two constructions of $G_n$ have the same law.

\vspace{0,2cm}
Now we introduce some notation. We denote the indicator function of a set $E$ by ${\bf 1}(E)$.  
For any vertices $v$ and $w$ we write $v \sim w$ if there is an edge between them (in which case we say that they are neighbors or connected), and $v \not \sim w$ otherwise. We also denote by $s_v$ the number of half-edges forming loops attached to a vertex $v$. We call size of a graph $G$ the cardinality of its set of vertices, and we denote it by $|G|$. 

\vspace{0.2cm}
A graph in which  all vertices have degree one, except one which is connected to all the others is called a {\bf star graph}. The only vertex with degree larger than one is called the center of the star graph, or central vertex. 
We call {\bf two-step star graph} a graph formed by a family of disjoints star graphs, denoted by $S(v_i)_{1\le i\le k}$,  
centered respectively in vertices $(v_i)_{1\le i\le k}$, plus an additional vertex $v_0$ and edges between $v_0$  
and all the $v_i'$'s 
(or equivalently it is just a tree, which is of height $2$ when rooted at $v_0$). The notation ${\bf S(k; d_1,\dots,d_k)}$ 
will refer to the two-step star graph where $v_i$ has degree $d_i+1$ for all $i$ (which means that inside $S(v_i)$, 
$v_i$ has degree $d_i$, or that $S(v_i)$ has size $d_i+1$). These graphs will play a crucial role in our proof of Theorem \ref{td}.

\vspace{0.2cm}
Furthermore we denote by $\bin(n,p)$ the binomial distribution with parameters $n$ and $p$. 
If $f $ and $g$ are two real functions, we write $f= \mathcal{O}(g)$ if there exists a constant $C>0,$ such that $f(x) \leq C g(x)$ for all $x ;$  $f \asymp g $ if $f= \mathcal{O}(g)$ and $g= \mathcal{O}(f);$  $f=o(g)$ if $g(x)/f(x) \rightarrow 0$ as $x \rightarrow \infty$. 
Finally  for a sequence of random variables $(X_n)$ and a function $f:\N \to (0,\infty)$, we say that $X_n \asymp f(n)$ holds w.h.p. if there exist positive constants $c$ and $C,$ such that $\pn(c f(n) \leq X_n \leq C f(n)) \rightarrow 1$, as $n\to \infty$.

\subsection{Preliminary estimates on the graph structure}
We first recall a large deviations result which we will use throughout this paper (see for instance \cite{DZ}): if $X \sim \bin(n,p),$ then for all $c>0$, there exists $\theta>0$, such that
\begin{equation} \label{ld}
\pn(|X-np| \geq cnp) \leq \exp(-\theta np ) \quad \textrm{   for all } n \in \mathbb{N} \textrm{ and } p \in [0,1].  
\end{equation}

\noindent Now we present a series of lemmas deriving basic estimates on the degree sequence and the graph structure. The first one is very elementary and applies to all the cases we will consider in this paper.  
\begin{lem} \label{lb}
Assume that the degree sequence is given either by \eqref{pnaj} or \eqref{paj}, with $1< a \leq 2$. For $j\ge 1$, let $A_j:=\{v : D_v =j\}$ and  $n_j= |A_j|$. 
Then there exist positive constants $c$ and $C$, such that 
$$ \pn (n_j \in (c n j^{-a},C n j^{-a}) \textrm{ for all }j=1,...,n^{1/2a})  = 1- o(1).$$
\end{lem}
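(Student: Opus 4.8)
The plan is to decompose the statement into two pieces: the degree distribution on $G_n$ gives each $D_v$ a probability $p(j)$ of equalling $j$ that is comparable to $j^{-a}$ uniformly in the relevant range of $j$, and then $n_j$ is a $\bin(n,p(j))$ random variable, so a union bound over the large deviations estimate \eqref{ld} finishes the argument. First I would fix the probabilities. In case \eqref{paj} one simply has $\pn(D_v=j)=p_a(j)=c_{\infty,a}j^{-a}$ for all $j\ge 1$, so $\pn(D_v=j)\asymp j^{-a}$ trivially with $c=C=c_{\infty,a}$. In case \eqref{pnaj} one has $\pn(D_v=j)=p_{n,a}(j)=c_{n,a}j^{-a}$ for $j=1,\dots,n$, where $c_{n,a}=\big(\sum_{i=1}^n i^{-a}\big)^{-1}$. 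Since $1<a\le 2$ I would note that $\sum_{i=1}^n i^{-a}$ is bounded above by $\sum_{i=1}^\infty i^{-a}=:\kappa_a<\infty$ and below by $1$, so $c_{n,a}\in[\kappa_a^{-1},1]$ for all $n$; hence again $\pn(D_v=j)\asymp j^{-a}$ with constants not depending on $n$, uniformly for $1\le j\le n$ (in particular for $1\le j\le n^{1/2a}\le n$). Call these uniform constants $c_0$ and $C_0$, so that $c_0 j^{-a}\le \pn(D_v=j)\le C_0 j^{-a}$.

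Next, for each fixed $j$ in the range $1\le j\le n^{1/2a}$, the variable $n_j=|A_j|=\sum_{v\in V_n}{\bf 1}(D_v=j)$ is a sum of $n$ i.i.d.\ indicators, hence $n_j\sim\bin(n,p(j))$ with $p(j)=\pn(D_v=j)$ and mean $np(j)\in[c_0 nj^{-a},C_0 nj^{-a}]$. I would apply \eqref{ld} with, say, the choice $c=1/2$: there is $\theta>0$ (depending only on this choice of $1/2$, not on $n$, $j$ or $p$) such that $\pn(|n_j-np(j)|\ge \tfrac12 np(j))\le \exp(-\theta\, np(j))$. On the event $|n_j-np(j)|<\tfrac12 np(j)$ we get $\tfrac12 np(j)<n_j<\tfrac32 np(j)$, and since $\tfrac12 np(j)\ge \tfrac{c_0}{2}nj^{-a}$ and $\tfrac32 np(j)\le \tfrac{3C_0}{2}nj^{-a}$, taking $c:=c_0/2$ and $C:=3C_0/2$ yields $n_j\in(cnj^{-a},Cnj^{-a})$. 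The failure probability is at most $\exp(-\theta np(j))\le \exp(-\theta c_0 nj^{-a})$, which is largest when $j$ is largest; for $j\le n^{1/2a}$ we have $j^{-a}\ge n^{-1/2}$, so each failure probability is bounded by $\exp(-\theta c_0\, n^{1/2})$.

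Finally I would take a union bound over the at most $n^{1/2a}\le n$ values of $j$ under consideration:
\begin{equation*}
\pn\big(\exists\, j\le n^{1/2a}:\ n_j\notin (cnj^{-a},Cnj^{-a})\big)\ \le\ n\exp(-\theta c_0\, n^{1/2})\ =\ o(1),
\end{equation*}
which is exactly the claimed bound. I do not expect any genuine obstacle here; the only points that require a little care are (i) checking that the normalizing constant $c_{n,a}$ in case \eqref{pnaj} stays bounded away from $0$ and $\infty$ uniformly in $n$ (this uses $a>1$ for the upper bound on $\sum i^{-a}$ and is immediate for the lower bound), and (ii) making sure the truncation level $n^{1/2a}$ is chosen so that the exponent $nj^{-a}\ge n^{1/2}$ dominates the number $n^{1/2a}$ of terms in the union bound — which it does comfortably since $n^{1/2a}\le n \ll e^{c n^{1/2}}$. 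Everything else is a direct application of the large deviations estimate \eqref{ld} already recorded in the paper.
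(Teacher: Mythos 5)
Your proof is correct and follows essentially the same route as the paper: identify $n_j$ as $\bin(n,p_j)$ with $p_j\asymp j^{-a}$ uniformly in $n$ (using $a>1$ to bound the normalizing constant in case \eqref{pnaj}), apply the large deviations estimate \eqref{ld}, and take a union bound over $j\le n^{1/2a}$. The paper states this more tersely, but the content is identical.
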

\begin{proof}
Observe that we always have $ n_j \sim \bin (n,p_j)$, for some $p_j\in  (c_{\infty,a} j^{-a}, j^{-a})$, with $c_{\infty,a}$ as in \eqref{paj}. Thus the result directly follows from \eqref{ld}.  
\end{proof}

\noindent
Our next results depend more substantially on the value of $a$ and the choice of the degree distribution.  
\begin{lem} \label{lta1}
Assume that the degree distribution is given by \eqref{pnaj}, with $a \in(1,2)$. 
Let $E := \{ v\, :\, D_v \geq n/2 \}$. Let also $\kappa > 2-a$ and $\chi<1$ be some constants. 
Then the following assertions hold   
\begin{itemize}
\item[(i)] $ L_n \asymp n^{3-a} $ w.h.p., \\
\item[(ii)] $ |E| \asymp n^{2-a} $ w.h.p., \\
\item[(iii)] $\pn( v\sim w \textrm{ for all $v$ and $w$ such that $D_v \geq n/2$ and $D_w \geq n^{\kappa}$}) = 1-o(1)$, \\
\item[(iv)] $\pn(s_v\ge 1) = o(1)$, for any $v\in V_n$, \\
\item[(v)]  $\pn\left(\textrm{All neighbors of $v$ have degree larger than $n^\chi$} \right) = 1-o(1)$, for any $v\in V_n$.
\end{itemize}
\end{lem}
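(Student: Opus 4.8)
The plan is to prove the five assertions in turn, each reducing to an elementary moment estimate for $D_v$ together with a concentration bound; throughout I write $p_j=p_{n,a}(j)=c_{n,a}j^{-a}$, using that $c_{n,a}$ is bounded above and below by positive constants (since $a>1$) and that $\sum_{j\le K}j^{b}\asymp K^{b+1}$ for fixed $b>-1$. For (i): since $\en[D_v]=c_{n,a}\sum_{j=1}^n j^{1-a}\asymp n^{2-a}$ we get $\en[L_n]\asymp n^{3-a}$, while $\mathrm{Var}(L_n)=n\,\mathrm{Var}(D_v)\le n\,\en[D_v^2]=n\,c_{n,a}\sum_{j=1}^n j^{2-a}\asymp n^{4-a}$; as $a<2$, Chebyshev's inequality yields $\pn(|L_n-\en[L_n]|\ge\varepsilon\,\en[L_n])=\kO(n^{a-2})=o(1)$. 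For (ii): $|E|=\sum_v\mathbf{1}(D_v\ge n/2)$ is exactly $\bin(n,p)$-distributed with $p=\pn(D_v\ge n/2)=c_{n,a}\sum_{n/2\le j\le n}j^{-a}\asymp n^{1-a}$, so $np\asymp n^{2-a}\to\infty$ and \eqref{ld} applies directly.

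For (iii) I fix two distinct vertices $v,w$ with $D_v\ge n/2$ and $D_w\ge n^\kappa$, condition on the degree sequence, and use the sequential construction to reveal the partners of the half-edges of $w$ one at a time. So long as none of these partners has been a half-edge of $v$, all $D_v$ half-edges of $v$ remain free and there are at most $L_n$ half-edges available to match with, so the conditional probability that the next partner is not a half-edge of $v$ is at most $1-D_v/L_n$; as there are at least $D_w/2$ such steps,
\[
\pn\big(v\not\sim w\mid(D_u)_u\big)\ \le\ \Big(1-\tfrac{D_v}{L_n}\Big)^{D_w/2}\ \le\ \exp\!\Big(-\tfrac{D_vD_w}{2L_n}\Big)\ \le\ \exp\!\Big(-\tfrac{n^{1+\kappa}}{4L_n}\Big).
\]
On the event $\{L_n\le Cn^{3-a}\}$, of probability $1-o(1)$ by (i), this is at most $\exp(-n^{\kappa-(2-a)}/(4C))$, which decays faster than any negative power of $n$ because $\kappa>2-a$; a union bound over the at most $n^2$ relevant pairs then gives (iii).

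Items (iv) and (v) are the delicate ones, because both statements genuinely fail for an atypically high-degree $v$: if $D_v$ has order $n$ then $v$ carries of order $D_v^2/L_n\asymp n^{a-1}$ self-loops and is matched to of order $D_v\,n^{1+\chi(2-a)}/L_n\to\infty$ half-edges of vertices of degree $\le n^\chi$. The resolution is that such $v$ are rare: $\pn(D_v\ge n^\delta)=c_{n,a}\sum_{j\ge n^\delta}j^{-a}\asymp n^{-\delta(a-1)}=o(1)$ for every $\delta>0$. So I would fix a small $\delta>0$ and split on whether $D_v\ge n^\delta$; the first case costs $\kO(n^{-\delta(a-1)})=o(1)$. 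In the second case I condition on the degree sequence, restricting to the event $\{cn^{3-a}\le L_n\le Cn^{3-a}\}$ of (i) (and, for (v), also to $\{L_n^{\mathrm{low}}\le C'n^{1+\chi(2-a)}\}$, where $L_n^{\mathrm{low}}:=\sum_{u:D_u\le n^\chi}D_u$, a $1-o(1)$ event by a second-moment bound parallel to (i), since $\en[L_n^{\mathrm{low}}]\asymp n^{1+\chi(2-a)}$), and I union-bound over the $D_v<n^\delta$ half-edges of $v$: conditionally each of them is matched to another half-edge of $v$ with probability $\kO(D_v/L_n)$ and to a half-edge of a vertex of degree $\le n^\chi$ with probability $\kO(L_n^{\mathrm{low}}/L_n)$ (the denominators lose only a constant factor because $D_v<n^\delta\ll L_n$). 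This bounds the conditional failure probability by $\kO(n^{2\delta-(3-a)})$ for (iv) and by $\kO(n^{\delta+(2-a)(\chi-1)})$ for (v) — the latter after discarding via (iv) the $o(1)$ chance of a self-loop at $v$ — both of which are $o(1)$ once $\delta<(3-a)/2$ for (iv), resp. $\delta<(2-a)(1-\chi)$ for (v), each possible since $1<a<2$ and $\chi<1$.

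The only genuine obstacle is the phenomenon exploited for (iv)--(v): the naive union bound over the half-edges of $v$ diverges when $D_v$ is large, so one must first discard the polynomially small event $\{D_v\ge n^\delta\}$ before that bound becomes effective. Everything else is routine bookkeeping with \eqref{ld}, Lemma \ref{lb}, and the moment estimates above.
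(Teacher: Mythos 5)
Your proof is correct and follows essentially the same route as the paper: Chebyshev for (i), \eqref{ld} for (ii), a sequential-matching bound plus a union bound for (iii), and for (iv)--(v) the key step of first discarding the event $\{D_v\ge n^{\delta}\}$ (the paper phrases this as ``$D_v\le n^{\varepsilon}$ w.h.p.'' but it is the same split) before a union bound over $v$'s half-edges conditioned on the degree sequence. The only cosmetic difference is in (iii), where you expose the stubs of $w$ rather than the first $n/8$ stubs of $v$ as the paper does; both give a tail that is $o(n^{-2})$, which is what the union bound needs.
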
 
\begin{proof} Let us start with Part (i). It follows from the definition \eqref{pnaj} that 
$$\mathbb{E} \left( D_v\right) \asymp n^{2-a} \quad \textrm{and} \quad \textrm{Var}(D_v) \asymp n^{3-a}. $$
The result follows by using Chebyshev's inequality.

Part (ii) is similar to Lemma \ref{lb}. 
For Part (iii), let $v$ and $w$ be two vertices such that $D_v \geq n/2 $ and $ D_w \geq n^{\kappa}$. Then conditionally on $(D_z)_{z \in V_n}$, 
the probability that the $n/8$ first stubs of $v$ do not connect to $w$ is smaller than $(1- \frac{n^{\kappa}}{L_n-n/4})^{n/8}$. 
Hence,
$$ \pn \left(v \not \sim w \mid (D_z)_{z \in V_n}, \, L_n \in (c n^{3-a},C n^{3-a})\right) \leq \left(1- \frac{n^{\kappa}}{Cn^{3-a}-n/4} \right)^{n/8} = o(n^{-2}),
$$
which proves (iii) by using (i) and a union bound.

We now prove (iv). To this end, notice that conditionally to $D_v$ and $L_n$, $s_v$ 
is stochastically dominated by a binomial random variable with parameters $D_v$ and $D_v/(L_n-2D_v+2)$
(remark in particular that since $D_z\ge 1$ for all $z$, the denominator in the last term is always positive). 
Hence Markov's inequality shows that 
$$\pn(s_v\ge 1\mid D_v,L_n) \le \frac{D_v^2}{L_n-2D_v+2}.$$
The result follows by using (i) and that for any fixed $\varepsilon>0$, $\pn(D_v\ge n^\varepsilon) = o(1)$.

It remains to prove (v). Denote the degrees of the neighbors of $v$ by $D_{v,i}$, $i = 1,\ldots,D_v$. 
It follows from the definition of the configuration model that for any $i\le D_v$ and $k\neq D_v$,  
$$\pn(D_{v,i} =k \mid (D_z)_{z\in V_n} )= \frac{kn_k}{L_n-1},$$  
where we recall that $n_k$ is the number of vertices of degree $k$. 
Therefore, 
\begin{align*}
\pn(D_{v,i} \le n^\chi \mid (D_z)_{z\in V_n} ) \le \frac{K_n}{L_n-1},
\end{align*}
where 
$$K_n = \sum_{k\le n^\chi} k n_k.$$
Summing over $i$, we get 
$$\pn(\exists i\le D_v : D_{v,i} \le n^\chi \mid (D_z)_{z\in V_n} ) \le \frac{K_nD_v}{L_n-1}.$$
Moreover, similarly to the proof of (i), we can see that 
w.h.p. 
$$K_n\asymp n ^{1+\chi(2-a)}.$$
Together with (i), and using again that $D_v\le n^\varepsilon$ w.h.p. for any fixed $\varepsilon>0$, 
we get (v). 
\end{proof}

\vspace{0.3cm}
\noindent Things drastically change when the degree distribution is given by \eqref{paj}. In this case $L_n$, as well as the 
$k$ maximal degrees, for any fixed $k$, are all of order $n^{1/(a-1)}$ (for the comparison with the previous case 
note that $1/(a-1)$ is always larger than $a-3$ when $a\in(1,2)$, which is consistent with the fact that the distribution 
\eqref{paj} stochastically dominates \eqref{pnaj}): 

\begin{lem} \label{2a<2}
Assume that the degree distribution is given by \eqref{paj}, with $a\in(1,2)$. 
Denote by $(D_i)_{1\le i\le n}$ the sequence of degrees ranged in decreasing order 
(in particular $D_1$ is the maximal degree). 
Let also $\kappa > (2-a)/(a-1)$ and $\chi<1/(a-1)$ be some constants. 
Then the following assertions hold   
\begin{itemize}
\item[(i)] there exist (a.s. positive and finite) random variables $(\gamma_i)_{i\ge 0}$, 
such that for any fixed $k\ge 1$, 
\begin{equation*}
\left( \frac{L_n}{n^{1/(a-1)}}, \frac{D_1}{n^{1/(a-1)}},\ldots,\frac{D_k}{n^{1/(a-1)}} \right)\ \mathop{\longrightarrow}^{(\kL)}_{n\to \infty} \  (\gamma_0, \gamma_1,\ldots,\gamma_k).
\end{equation*} 
\item[(ii)] For any $\varepsilon >0$, there exists a positive constant $\eta=\eta(\varepsilon)$, such that for any fixed $k\ge 1$, 
$$\liminf_{n\to \infty} \, \pn\left(D_i/L_n \ge \eta\, i^{-1/(a-1)} \quad  \textrm{for all } 1\le i\le k\right) \ge 1-\varepsilon,$$
and an integer $k=k(\varepsilon)$, such that 
$$\liminf_{n\to \infty} \, \pn(D_1+\dots +D_k\ge L_n/2) \ge 1-\varepsilon.$$ 
\item[(iii)] $\pn( v\sim w \textrm{ for all $v$ and $w$ such that $D_v \geq n$ and $D_w \geq n^{\kappa}$}) = 1-o(1)$, \\
\item[(iv)] $\pn(s_v\ge 1) = o(1)$, for any $v\in V_n$, \\
\item[(v)]  $\pn\left(\textrm{All neighbors of $v$ have degree larger than $n^\chi$} \right) = 1-o(1)$, for any $v\in V_n$.
\end{itemize}
\end{lem}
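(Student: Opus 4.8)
Looking at Lemma 2.5 (labeled \texttt{2a<2}), I need to prove five assertions about the configuration model with degree distribution $p_a(j) = c_{\infty,a}\, j^{-a}$ for $a \in (1,2)$.

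\textbf{The plan.} The key object is the sequence of degrees $(D_v)_{v \in V_n}$, which are i.i.d.\ with a heavy-tailed distribution: $\pn(D_v \ge x) \asymp x^{-(a-1)}$. Since $a-1 < 1$, the mean is infinite, and the standard theory of sums of i.i.d.\ heavy-tailed variables applies. For Part (i), I would recall the classical result (e.g.\ from Feller or the stable-law literature) that when $\pn(D_v > x) = x^{-(a-1)} \ell(x)$ with $a - 1 \in (0,1)$, the normalized sum $L_n / n^{1/(a-1)}$ converges in distribution to a positive stable random variable $\gamma_0$, and jointly the top order statistics $D_i / n^{1/(a-1)}$ converge to the points $\gamma_i$ of the corresponding Poisson point process (the LePage representation), all a.s.\ positive and finite. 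This gives the joint convergence in (i) directly.

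\textbf{Parts (ii)--(v).} Part (ii) is a quantitative extraction from (i): since $(\gamma_i)$ are a.s.\ positive and $\sum_i \gamma_i \le \gamma_0 < \infty$ a.s., for any $\varepsilon$ I can pick $\eta$ small enough that $\pn(\gamma_i \ge 2\eta\, i^{-1/(a-1)}\ \forall i \le k) \ge 1 - \varepsilon/2$, and $k$ large enough that $\pn(\gamma_1 + \dots + \gamma_k \ge 2\gamma_0/3) \ge 1 - \varepsilon/2$ (using that the $\gamma_i$ sum to a finite quantity comparable to $\gamma_0$); then transfer these to finite $n$ via the convergence in (i) and the continuous mapping theorem, adjusting constants. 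Parts (iv) and (v) are essentially identical to their counterparts in Lemma \ref{lta1}: for (iv), conditionally on $D_v$ and $L_n$, $s_v$ is stochastically dominated by $\bin(D_v, D_v/(L_n - 2D_v + 2))$, so $\pn(s_v \ge 1 \mid D_v, L_n) \le D_v^2/(L_n - 2D_v + 2)$, and since $L_n \asymp n^{1/(a-1)}$ w.h.p.\ while $D_v = o(n^{1/(a-1)})$ in probability (indeed $D_v = O_{\pp}(1)$ for a fixed vertex, as it does not depend on $n$), the bound is $o(1)$. For (v), conditionally on the degree sequence, $\pn(D_{v,i} \le n^\chi \mid (D_z)) \le K_n/(L_n - 1)$ with $K_n = \sum_{k \le n^\chi} k n_k$; here $K_n \asymp n \cdot n^{\chi(2-a)} = n^{1 + \chi(2-a)}$ w.h.p.\ (same computation as before, truncated sum of a power law), and since $\chi < 1/(a-1)$ one checks $1 + \chi(2-a) < 1/(a-1)$, so $K_n D_v / (L_n - 1) \to 0$ after a union bound over the $D_v = O_{\pp}(1)$ neighbors.

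\textbf{Part (iii) and the main obstacle.} For (iii), if $D_v \ge n$ and $D_w \ge n^\kappa$, then conditionally on the degrees, the probability that the first, say, $n/2$ stubs of $v$ all avoid $w$ is at most $(1 - n^\kappa/(L_n - n))^{n/2}$; since $L_n \asymp n^{1/(a-1)}$ w.h.p., this is at most $\exp(-c\, n^{1 + \kappa - 1/(a-1)})$, and the exponent is positive precisely because $\kappa > (2-a)/(a-1) = 1/(a-1) - 1$. A union bound over at most $n^2$ pairs then finishes it. I expect the main delicate point to be Part (i): one must verify that the heavy-tail hypothesis $\pn(D_v > x) = x^{-(a-1)}\ell(x)$ holds with the slowly varying correction $\ell$ behaving well enough for the joint stable/point-process limit, and then carefully invoke the joint convergence of the sum with finitely many top order statistics — this is standard but requires citing the right result (e.g.\ via the LePage series representation of stable laws). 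Everything downstream, (ii)--(v), is then routine and parallels Lemma \ref{lta1}.
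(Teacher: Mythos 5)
Your approach matches the paper's: Part (i) is the LePage/stable-limit representation (the paper simply cites Lemma 2.1 of \cite{VVHZ} and records the explicit form $\gamma_i = ((a-1)\Gamma_i/c_{\infty,a})^{-1/(a-1)}$ with $\Gamma_i$ a Gamma$(i,1)$ variable and $\gamma_0=\sum_i\gamma_i$), Part (ii) is extracted from the almost-sure structure of the limit $(\gamma_i)$, and Parts (iii)--(v) transpose the bounded-degree arguments of Lemma~\ref{lta1}, which is exactly the level of detail the paper gives (``similar to the previous case''). Two small points to tighten in your write-up. First, in Part (ii) the lemma demands a single $\eta(\varepsilon)$ valid for \emph{every} $k$; your phrase ``pick $\eta$ small enough that $\pn(\gamma_i\ge 2\eta i^{-1/(a-1)}\ \forall i\le k)\ge 1-\varepsilon/2$'' reads as if $\eta$ may depend on $k$. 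The paper secures uniformity by noting $\Gamma_i/i\to1$ a.s., hence $\pn(\Gamma_i\le Ci\ \forall i\ge1)\ge1-\varepsilon/2$ for $C$ large, which gives $\gamma_i\ge c_1 i^{-1/(a-1)}$ for all $i$ simultaneously, and then combines this with $\pn(\gamma_0\le C)\ge 1-\varepsilon/2$. Second, your ``$L_n\asymp n^{1/(a-1)}$ w.h.p.'' in (iii)--(v) is not literally correct under the paper's definition of w.h.p.\ $\asymp$, since $\gamma_0$ is a strictly positive but unbounded random variable whose support touches $0$; the clean fix (implicit in the paper) is to fix $\delta>0$, work on $\{cn^{1/(a-1)}\le L_n\le Cn^{1/(a-1)}\}$ which has probability $\ge1-\delta$ by Part (i), carry out the union-bound/Chebyshev argument there, and let $\delta\to0$ at the end. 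Neither issue changes the overall logic, which is the same as the paper's.
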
 
\begin{proof}
Part (i) is standard, we refer for instance to Lemma 2.1 in \cite{VVHZ}. More precisely   
let $(e_i)_{i \geq 1} $  be an i.i.d. sequence of exponential random variables with mean one and $\Gamma_i= e_1+\ldots+e_i$, for all $i\ge 1$ (in particular $\Gamma_i$ is a Gamma random variable with parameters $i$ and $1$). Then the result holds with 
$$\gamma_i = ((a-1)\Gamma_i/c_{\infty,a})^{-1/(a-1)},$$ 
for all $i\ge 1$, and $\gamma_0=\sum_i \gamma_i$ (which is well a.s. a convergent series).

For (ii) note that $\Gamma_i/i\to 1$ a.s. as $i\to \infty$. In particular for any $\varepsilon$, there exists $C>0$, such that 
$$\pn(\Gamma_i\le Ci \textrm{ for all }i\ge 1)\ge 1-\varepsilon/2.$$
The first assertion follows with (i), using also that $\pn(\gamma_0\le C)\ge 1-\varepsilon/2$, for $C$ large enough. The second one is an immediate corollary of (i) and the definition of $\gamma_0$ as the limit of the partial sum $\sum_{i\le k} \gamma_i$, as $k\to \infty$.

Parts (iii)-(v) are similar to the previous case. 
\end{proof}

\noindent
We now give an analogous result for the case $a=2$, which we will not prove here since it is entirely similar to the case $a<2$ (just for the case when the degree distribution is given by \eqref{paj}, one can use the elementary fact that w.h.p. all vertices have degree smaller than $n\log \log n$). 
\begin{lem} \label{ltb1} Assume that the degree distribution is given either by \eqref{pnaj} or \eqref{paj}, with $a=2$. 
Let $E': =\{v :  D_v \geq n^{3/4}\}$. Then the following assertions hold
\begin{itemize}
\item[(i)] $L_n \asymp n \log n$ w.h.p.,   \\
\item[(ii)] $|E'|\asymp n^{1/4} \quad  \textrm{and}\quad  \sum_{v\in E'} D_v \asymp n\log n \quad  \textrm{w.h.p.}$, \\ 
\item[(iii)] $\pn(v\sim w \textrm{ for all $v$ and $w$ such that }  D_v \geq n/\log n \textrm{ and } D_w \geq (\log n)^4) =1- o(1)$,\\ 
\item[(iv)] $\pn(s_v\ge 1) = o(1)$, for any $v\in V_n$.\\
\item[(v)] $\pn\left(\textrm{All neighbors of $v$ have degree larger than $(\log n)^4$} \right) = 1-o(1)$, for any $v\in V_n$.
\end{itemize}
\end{lem}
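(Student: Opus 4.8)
\textbf{Proof proposal for Lemma \ref{ltb1}.} The plan is to mimic the proofs of Lemma \ref{lta1} (case \eqref{pnaj}) and Lemma \ref{2a<2} (case \eqref{paj}), tracking how each estimate degenerates when $a=2$. The new feature is that $j^{-a}=j^{-2}$ is exactly on the borderline of summability: $\sum_{j\le n} j\cdot j^{-2}\asymp \log n$, which is the source of the logarithmic corrections in (i) and (ii). I would first fix the normalising constants: in case \eqref{pnaj} one has $c_{n,2}\asymp 1$, so $\E(D_v)=\sum_{j=1}^n j\, p_{n,2}(j)\asymp \log n$, while in case \eqref{paj} the degree distribution has infinite mean but, as noted in the statement, one can truncate at $n\log\log n$ since w.h.p. no vertex has degree exceeding that (a union bound over the $n$ vertices using $\pn(D_v\ge n\log\log n)=c_{\infty,2}\sum_{j\ge n\log\log n} j^{-2}\asymp 1/(n\log\log n)$ gives probability $o(1)$). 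For (i), in case \eqref{pnaj} I would use Chebyshev exactly as in Lemma \ref{lta1}(i): here $\mathrm{Var}(D_v)\asymp n$, so $\mathrm{Var}(L_n)\asymp n^2$ while $\E(L_n)\asymp n\log n$, and Chebyshev gives concentration. In case \eqref{paj}, I would instead work with the truncated variables $\widetilde D_v=D_v\wedge (n\log\log n)$, whose mean is $\asymp \log n$ and whose variance is $O(n\log\log n)$, so again $\E(\widetilde L_n)\asymp n\log n$ dominates the standard deviation, and then use that $L_n=\widetilde L_n$ w.h.p.

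For (ii), I would split: the vertices of $E'=\{v:D_v\ge n^{3/4}\}$ satisfy $n^{3/4}\le D_v\le n$ in case \eqref{pnaj} (resp.\ $\le n\log\log n$ in case \eqref{paj}), so $|E'|=\sum_{j\ge n^{3/4}} n_j$, and the Lemma \ref{lb}-type estimate together with $\sum_{j\ge n^{3/4}} j^{-2}\asymp n^{-3/4}$ gives $|E'|\asymp n\cdot n^{-3/4}=n^{1/4}$ w.h.p. For the sum of the degrees in $E'$, one has $\sum_{v\in E'} D_v=\sum_{j\ge n^{3/4}} j\, n_j$; since $n_j\asymp n j^{-2}$ this is $\asymp n\sum_{n^{3/4}\le j\le n} j^{-1}\asymp n\log(n/n^{3/4})=n\cdot \tfrac14\log n\asymp n\log n$ (and analogously, with the upper cutoff $n\log\log n$, in case \eqref{paj}). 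As usual, a concentration argument (Chebyshev, or the binomial large deviation bound \eqref{ld} applied to each block of degrees and summed) upgrades these order-of-magnitude computations on expectations to w.h.p.\ statements; the only care needed is that these are sums of independent but non-identically distributed variables, so I would either use Bernstein/Chebyshev directly on the sum or dyadically decompose the range of $j$.

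Parts (iii)--(v) go through verbatim as in the earlier lemmas, replacing the polynomial thresholds by their $a=2$ analogues. For (iii), given $D_v\ge n/\log n$ and $D_w\ge (\log n)^4$, conditionally on the degree sequence the probability that the first, say, $D_v/4$ stubs of $v$ avoid $w$ is at most $(1-(\log n)^4/(L_n-2D_v+2))^{D_v/4}$; using (i) to substitute $L_n\asymp n\log n$ and $D_v/4\ge n/(4\log n)$, the exponent is $\asymp (\log n)^4\cdot n/(\log n)/(n\log n)=(\log n)^2\to\infty$ fast enough that, after a union bound over the at most $n^2$ pairs, the probability is $o(1)$. For (iv), as in Lemma \ref{lta1}(iv), $s_v$ conditionally on $D_v,L_n$ is stochastically dominated by $\bin(D_v, D_v/(L_n-2D_v+2))$, so $\pn(s_v\ge 1\mid D_v,L_n)\le D_v^2/(L_n-2D_v+2)$; since $D_v\le n^\varepsilon$ w.h.p.\ for every fixed $\varepsilon>0$ (in case \eqref{pnaj} trivially, in case \eqref{paj} because $\pn(D_v\ge n^\varepsilon)\asymp n^{-\varepsilon}=o(1)$) and $L_n\asymp n\log n$, the bound is $o(1)$. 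For (v), writing $K_n=\sum_{k\le (\log n)^4} k\,n_k$, one has $\pn(\exists i\le D_v: D_{v,i}\le (\log n)^4\mid (D_z))\le K_n D_v/(L_n-1)$; here $K_n\asymp n\sum_{k\le (\log n)^4} k^{-1}\asymp n\log\log n$, so $K_n D_v/L_n\asymp n^\varepsilon\log\log n/\log n=o(1)$ w.h.p.

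\textbf{Main obstacle.} The only genuinely delicate point, relative to the $a<2$ cases already treated, is controlling the logarithmic factors in (i) and (ii)---in particular making sure that in case \eqref{paj}, where $D_v$ has no finite mean, the truncation at $n\log\log n$ is both legitimate (which it is, by the union bound above) and sharp enough that the truncated mean is still $\asymp \log n$ rather than something larger: indeed $\E(\widetilde D_v)=c_{\infty,2}\sum_{j\le n\log\log n} j^{-1}\asymp \log(n\log\log n)\asymp \log n$, so no extra factor appears, and the variance bound $\E(\widetilde D_v^{\,2})\le (n\log\log n)\E(\widetilde D_v)\asymp n\log n\cdot\log\log n$ is comfortably smaller than $(\E\widetilde L_n)^2\asymp n^2(\log n)^2$. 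Everything else is a routine transcription of the arguments in Lemmas \ref{lta1} and \ref{2a<2}, which is exactly why the paper omits the details.
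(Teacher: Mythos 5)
Your overall strategy is exactly what the paper intends (it omits the proof of Lemma~\ref{ltb1}, remarking only that it is ``entirely similar to the case $a<2$'' plus the $n\log\log n$ truncation in case~\eqref{paj}), and parts (i)--(iv) are correct: the normalization $c_{n,2}\asymp 1$, the truncation/union-bound argument for case~\eqref{paj}, the Chebyshev computations for (i) and (ii), and the exponent $(\log n)^2$ in (iii) all check out. One cosmetic remark on (ii): for $j$ between $n^{3/4}$ and $n$ the pointwise estimate $n_j\asymp nj^{-2}$ of Lemma~\ref{lb} does not hold (those $n_j$ are typically $0$ or $1$), but you correctly flag that one should apply Chebyshev/Bernstein directly to the sum $\sum_v D_v\,\mathbf 1(D_v\ge n^{3/4})$ of i.i.d.\ variables, which is what makes the argument rigorous.

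There is, however, a genuine gap in your treatment of part~(v). You import verbatim the bound ``$D_v\le n^\varepsilon$ w.h.p.\ for every fixed $\varepsilon>0$'' from the proof of Lemma~\ref{lta1}(v), where it suffices because the polynomial margin $n^{(\chi-1)(2-a)}$ absorbs $n^\varepsilon$. At $a=2$ all polynomial margins collapse to logarithms, and your final expression $K_nD_v/L_n\asymp n^\varepsilon\log\log n/\log n$ \emph{diverges} for every fixed $\varepsilon>0$; it is not $o(1)$ as you claim. The fix is to use a genuinely logarithmic-scale bound on $D_v$. Since $\pn(D_v\ge M)=c\sum_{j\ge M}j^{-2}\asymp 1/M$ (in both cases~\eqref{pnaj} and~\eqref{paj}), one has for instance $\pn\bigl(D_v\le \log n/(\log\log n)^2\bigr)=1-\mathcal O\bigl((\log\log n)^2/\log n\bigr)=1-o(1)$, and on this event $K_nD_v/L_n\asymp (\log\log n)\cdot\bigl(\log n/(\log\log n)^2\bigr)/\log n=1/\log\log n=o(1)$, which is what is needed. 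More generally any threshold $M_n\to\infty$ with $M_n=o(\log n/\log\log n)$ works; the key point is that at $a=2$ the degree of the starting vertex must be controlled at the $\log n/\log\log n$ scale rather than at any polynomial scale.
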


\section{Existence of a large two-step star graph}
In this section we will prove that the graph $G_n$ contains w.h.p. a large two-step star graph 
$S(k;d_1,\dots,d_k)$, the term large meaning that $d_1+\dots +d_k$ will be of order $n$, and all the $d_i$'s of order at least $\log n$. 
However, the precise values of $k$ and the $d_i$'s 
will depend on which case we consider 
(to be more precise, in the case of degree distribution given by \eqref{paj} 
with $a\in(1,2)$ we prove that for any $\varepsilon>0$, $G_n$ contains a large two-step star graph with probability 
at least $1-\varepsilon$, with $k$ and the $d_i$'s depending on $\varepsilon$. Nevertheless, 
the rest of the proof works mutadis mutandis).  

\subsection{Case $1<a<2$}
\subsubsection{Bounded degree sequence}
We assume here that the law of the degrees is given by \eqref{pnaj}.
Recall that $E= \{v: D_v \geq n/2\}$ and $A_1 =\{v : D_v =1\}$. 
In addition for any vertex $v$, let us denote by
$$d_1(v):= \sum_{w \in A_1} {\bf 1}(\{w \sim v\}),$$
the number of neighbors of $v$ in $A_1$. 

\begin{lem} \label{q}
There exist  positive contants $\beta $ and $\kappa$, such that 
\begin{equation} \label{eta2}
\pn \left( \# \{v \in E: d_1(v) \geq \beta n^{a-1}\} \geq \kappa\,  n^{2-a}\right) = 1-o(1).
\end{equation}
\end{lem}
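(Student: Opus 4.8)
The plan is to estimate, for a fixed vertex $v\in E$, the conditional expectation and conditional variance of $d_1(v)$ given the degree sequence $(D_z)_{z\in V_n}$, and then combine a concentration bound for $d_1(v)$ with a second-moment (or martingale) argument over the $|E|\asymp n^{2-a}$ vertices of $E$. First I would condition on the event $\Omega_n$ from Lemmas \ref{lb} and \ref{lta1}, on which $L_n\asymp n^{3-a}$, $n_1\asymp n$, and $|E|\asymp n^{2-a}$; this event has probability $1-o(1)$, so it suffices to prove \eqref{eta2} conditionally on $\Omega_n$. On $\Omega_n$, for a vertex $v$ with $D_v\ge n/2$, each of its (at least $n/2$) stubs pairs with a stub attached to an $A_1$-vertex with probability $\asymp n_1/L_n \asymp n/n^{3-a} = n^{a-2}$, so $\en(d_1(v)\mid (D_z)) \asymp n\cdot n^{a-2} = n^{a-1}$. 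This identifies the correct order $\beta n^{a-1}$.

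The key steps, in order, are: (1) Compute $\en(d_1(v)\mid (D_z))$ on $\Omega_n$ and show it is $\ge 2\beta n^{a-1}$ for a suitable small constant $\beta$ — here one writes $d_1(v)$ as a sum over the (roughly $n/2$ up to $D_v\le n^\varepsilon$ corrections — actually $D_v$ can be as large as $n$, but only $O(1)$ such vertices exist, which is harmless) half-edges of $v$ of indicators that a given half-edge connects into $A_1$, and uses that after revealing a few pairings the ratio $n_1/L_n$ changes by a negligible factor since $D_v = o(L_n)$. (2) Show $d_1(v)$ concentrates around its conditional mean: the cleanest route is to note that $d_1(v)$ is a function of the sequential pairing of $v$'s half-edges which changes by at most $1$ when one pairing is resampled, or alternatively to dominate/compare $d_1(v)$ with a hypergeometric/binomial variable with mean $\asymp n^{a-1}$ and apply the large-deviation bound \eqref{ld} (or Azuma), giving $\pn(d_1(v)<\beta n^{a-1}\mid (D_z)) \le \exp(-\theta n^{a-1})$ on $\Omega_n$. (3) Let $N:=\#\{v\in E: d_1(v)\ge \beta n^{a-1}\}$. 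From step (2), $\en(|E|-N\mid (D_z)) \le |E|\exp(-\theta n^{a-1}) = o(1)$ on $\Omega_n$, so by Markov's inequality $N = |E|(1-o(1)) \ge \kappa n^{2-a}$ w.h.p. for $\kappa$ a bit smaller than the constant $c$ from $|E|\ge cn^{2-a}$. Finally, removing the conditioning on $\Omega_n$ costs only $o(1)$ in probability, which gives \eqref{eta2}.

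The main obstacle I expect is step (2): the events $\{w\sim v\}$ for $w\in A_1$ are not independent (they are negatively correlated through the matching constraint), and the conditional mean $n_1/L_n$ itself drifts as $v$'s stubs get matched. The honest way to handle this is to use the \emph{sequential} construction of the configuration model described in the Preliminaries: match $v$'s half-edges one at a time, and observe that at each of the first, say, $n/4$ steps the conditional probability of pairing with an unused $A_1$-stub is at least $(n_1 - n/4)/(L_n) \ge \beta' n^{a-2}$ on $\Omega_n$ (using $n_1\asymp n \gg n/4$ and $L_n\asymp n^{3-a}$); hence $d_1(v)$ stochastically dominates a $\bin(n/4,\beta' n^{a-2})$ variable, whose mean is $\asymp n^{a-1}$, and \eqref{ld} applies directly to give the exponential lower-tail bound. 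A minor subtlety is the handful of vertices with degree much larger than $n/2$ (up to $n$), and loops at $v$ (controlled by Lemma \ref{lta1}(iv)); both affect only lower-order terms and can be absorbed into the constants. With this domination in hand, steps (1) and (3) are routine.
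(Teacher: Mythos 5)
Your proposal is correct in substance, but it follows a genuinely different concentration strategy than the paper's. The paper establishes concentration of $d_1(v)$ via a second-moment argument: it computes, conditionally on $(D_z)$, the mean $D_v n_1/(L_n-1)\asymp n^{a-1}$ and the pairwise covariances $\cov(w\sim v,\,w'\sim v\mid(D_z))=\mathcal O(D_v/L_n^2)$, applies Chebyshev to get $\pn(d_1(v)\ge \beta n^{a-1}\mid(D_z))=1-o(1)$ for each $v\in E$, and then invokes a \emph{second} Chebyshev across pairs $v\neq w\in E$ (after arguing that $\cov(\mathbf 1\{d_1(v)\ge\beta n^{a-1}\},\,\mathbf 1\{d_1(w)\ge\beta n^{a-1}\}\mid(D_z))=o(1)$) to control the count $\#\{v\in E: d_1(v)\ge\beta n^{a-1}\}$. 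You instead obtain an \emph{exponential} lower-tail bound for each $d_1(v)$ via stochastic domination by a binomial, using the sequential-matching description and the large deviation bound \eqref{ld}; the exponential decay then makes the union step trivial, since $\en(|E|-N\mid(D_z))\le |E|e^{-\theta n^{a-1}}=o(1)$ on the good event and Markov finishes it. Your route buys a much stronger per-vertex bound and thereby sidesteps the (somewhat delicate) cross-vertex covariance estimate that the paper needs; the paper's route stays entirely within variance calculations and avoids the coupling/domination step.

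One constant-level slip worth fixing: you propose to match the first $n/4$ half-edges of $v$ and bound the hit probability by $(n_1-n/4)/L_n$. Since $n_1\asymp c_{n,a}\,n$ with $c_{n,a}\to 1/\zeta(a)$, the constant in $n_1\ge cn$ can easily be below $1/4$ (indeed $1/\zeta(a)\to 0$ as $a\downarrow 1$), so $n_1-n/4$ may be negative. You should instead run the domination for $K=\min(n/4,\,n_1/2)$ steps (still $\asymp n$ on the good event), so that at step $k\le K$ there remain at least $n_1-k\ge n_1/2$ unpaired $A_1$-stubs and, even if every pairing so far were a loop at $v$, at most $2k\le n/2\le D_v$ of $v$'s stubs have been consumed. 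With that correction, $d_1(v)$ dominates $\bin(K,(n_1/2)/(L_n-1))$, whose mean is $\asymp n^{a-1}$ on the good event, and \eqref{ld} gives the exponential bound you want. The rest of the argument is sound.
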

\begin{proof}
It follows from the definition of the configuration model that for any $w \in A_1$ and $v \in E$,
\begin{align}
\label{wv}
\pn(w \sim v \mid (D_z)_{z\in V_n}) & = \frac{D_v}{L_n-1}.
\end{align}
Similarly for any $v\in E$ and $w \neq w' \in A_1$, 
\begin{align} \label{c1}
|\cov(w \sim v, w' \sim v\mid (D_z)) |& =  \left| \frac{D_v(D_v-1)}{(L_n-1)(L_n-3)} -\left( \frac{D_v}{L_n-1}\right)^2\right| \notag \\
& = \mathcal O\left( \frac{D_v}{L_n^2}\right) 
\end{align}
Define now the set 
$$\kA_n := \left\{cn^{3-a}\le L_n\le Cn^{3-a}\right\}\cap \left\{|A_1|\ge c n\right\},$$
with $0<c\le C$, such that 
\begin{equation}
\label{Gn}
\pp(\kA_n)= 1-o(1). 
\end{equation}
Note that the existence of $c$ and $C$ is guaranteed by Lemma \ref{lb} and \ref{lta1}. 
Set also $\beta=c/(4C)$. 
Then \eqref{wv} and \eqref{c1} show that on $\mathcal{A}_n$, 
\begin{equation*}
\label{sumwv}
\sum\limits_{w \in A_1} \pn (w \sim v \mid (D_z)) \geq 2\beta n^{a-1}, 
\end{equation*}
and 
$$\sum\limits_{w \neq w'\in A_1} \cov(w \sim v,w'\sim v \mid (D_z)) =o(n^{2a-2}).$$
Thus by using Chebyshev's inequality, we deduce that on $\mathcal{A}_n$, 
\begin{equation*}
\pn(d_1(v) \geq \beta n^{a-1} \mid (D_z))  = \pn \left(\sum\limits_{w \in A_1} {\bf 1}( \{w \sim v \}) \geq \beta n^{a-1} \ \Big| \ (D_z)\right) = 1- o(1).  
\end{equation*}
Hence for any $v \neq w \in E$,
\begin{align*}
\cov(d_1(v) \geq \beta n^{a-1}, d_1(w) \geq \beta n^{a-1}\mid (D_z)_{z\in V_n}) =o(1).
\end{align*}
Then by using Chebyshev's inequality again we obtain that on the event $\{|E|\ge 2\kappa n^{2-a}\}$, 
\begin{equation*} 
\pn \left( \# \{v \in E: d_1(v) \geq \beta n^{a-1}\} \geq \kappa\,  n^{2-a}\mid (D_z)_{z\in V_n}\right) = 1-o(1).
\end{equation*}
Then  \eqref{eta2} follows by using \eqref{Gn}, Lemma \ref{lta1} (ii) and  taking expectation. 
\end{proof}

\noindent As a corollary we get the following result: 

\begin{prop}
\label{stara<2}
Assume that the law of the degree sequence is given by \eqref{pnaj} with $a\in(1,2)$. 
There exist positive contants $\beta $ and $\kappa$, such that w.h.p. $G_n$ contains 
as a subgraph a copy of $S(k;d_1,\dots, d_k)$, with $k= \kappa n^{2-a}$ and $d_i= \beta n^{a-1}$, for all $i\le k$. 
\end{prop}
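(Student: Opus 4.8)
The plan is to bootstrap from Lemma~\ref{q} by exhibiting inside $G_n$ a center vertex $v_0$ of degree $\ge n/2$ that is connected to many of the high-degree vertices identified there, and then showing that those high-degree vertices each carry enough degree-one pendants to serve as the ``inner'' star centers. Concretely, let $E=\{v:D_v\ge n/2\}$ and let $\kG_n$ be the event in \eqref{eta2}, i.e. the set $E^\star:=\{v\in E: d_1(v)\ge \beta n^{a-1}\}$ has size at least $\kappa n^{2-a}$; by Lemma~\ref{q} this holds w.h.p. On $\kG_n$ pick any vertex $v_0\in E$ (so $D_{v_0}\ge n/2$, in particular $v_0\in E$ itself has high degree), and by Lemma~\ref{lta1}(iii) with the choice $\kappa$ of exponent adapted so that $n^{2-a}$ vertices of $E^\star$ all have degree $\ge n^{\kappa}$ — which is automatic since every $v\in E$ has $D_v\ge n/2\gg n^\kappa$ — we get that w.h.p. $v_0\sim v_i$ for \emph{all} $v_i\in E^\star$ simultaneously. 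Thus w.h.p. $v_0$ is adjacent to a set $\{v_1,\dots,v_k\}\subseteq E^\star$ with $k=\kappa n^{2-a}$, and each $v_i$ has at least $\beta n^{a-1}$ neighbors in $A_1$.

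Next I would extract the actual two-step star graph. For each $i$, among the $\ge\beta n^{a-1}$ pendant neighbors of $v_i$ in $A_1$, I select exactly $\beta n^{a-1}$ of them and declare $S(v_i)$ to be the star consisting of $v_i$ together with these chosen pendants. The only thing to check is \textbf{disjointness}: the stars $S(v_i)$ must be vertex-disjoint, and none of them may contain $v_0$. Disjointness of the centers $v_i$ is clear (distinct vertices of $E^\star$). A degree-one vertex $w\in A_1$ has a single edge, hence can be a pendant of at most one $v_i$, so the pendant sets are automatically disjoint; moreover $v_0$ has degree $\ge n/2>1$ so $v_0\notin A_1$ and cannot be any selected pendant, and $v_0\ne v_i$ since we may simply choose $v_0\in E\setminus E^\star$ (which is nonempty w.h.p. because $|E|\asymp n^{2-a}$ by Lemma~\ref{lta1}(ii) while we only use $\kappa n^{2-a}$ vertices of $E^\star$, and we can shrink $\kappa$ slightly to leave room; alternatively just relabel one element of $E^\star$ as $v_0$ and keep $k=\kappa n^{2-a}$ with a marginally smaller constant). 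Putting this together, $G_n$ contains $S(k;d_1,\dots,d_k)$ with $k=\kappa n^{2-a}$ and $d_i=\beta n^{a-1}$ for all $i$, with the constants possibly adjusted by harmless factors.

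The one genuinely delicate point — the main obstacle — is making sure that the various ``w.h.p.'' events can be intersected: we need the event of Lemma~\ref{q}, the event of Lemma~\ref{lta1}(iii) guaranteeing $v_0\sim v_i$ for \emph{all} $v_i\in E^\star$, and the event $\{|E|\asymp n^{2-a}\}$ all to hold on a common event of probability $1-o(1)$. Since each is $1-o(1)$, a union bound suffices, but one must be careful that Lemma~\ref{lta1}(iii) is stated for a \emph{fixed} threshold pair $(n/2,n^\kappa)$ and already quantifies over all such $v,w$ at once — which it does — so no extra union bound over the $k\asymp n^{2-a}$ pairs is needed. I would also double-check that the value of $\kappa$ appearing in Lemma~\ref{lta1}(iii) can be taken as small as we like (any $\kappa>2-a$ works there), and that here the relevant degrees $n/2$ dwarf $n^\kappa$, so the hypothesis is trivially met; the rest is bookkeeping of constants.

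\begin{proof}[Proof of Proposition~\ref{stara<2}]
Let $\beta,\kappa>0$ be the constants from Lemma~\ref{q}. Fix any $\kappa'\in(2-a,1)$ and consider the event
\begin{equation*}
\kB_n := \big\{\#\{v\in E: d_1(v)\ge \beta n^{a-1}\}\ge \kappa\, n^{2-a}\big\}\ \cap\ \{v\sim w \text{ for all }v,w\text{ with }D_v\ge n/2,\ D_w\ge n^{\kappa'}\}.
\end{equation*}
By Lemma~\ref{q} and Lemma~\ref{lta1}(iii), $\pn(\kB_n)=1-o(1)$. On $\kB_n$, set $E^\star:=\{v\in E: d_1(v)\ge\beta n^{a-1}\}$, so $|E^\star|\ge \kappa n^{2-a}$. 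Choose distinct vertices $v_0,v_1,\dots,v_k\in E^\star$ with $k:=\lfloor \kappa n^{2-a}\rfloor - 1$ (for $n$ large, $k\ge \tfrac{\kappa}{2} n^{2-a}$, and we rename $\kappa/2$ as $\kappa$). Since each $v_i\in E$ has $D_{v_i}\ge n/2\ge n^{\kappa'}$, the second event in $\kB_n$ gives $v_0\sim v_i$ for all $1\le i\le k$.

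For each $i=1,\dots,k$, pick $d_i:=\lceil \beta n^{a-1}\rceil$ distinct neighbors of $v_i$ lying in $A_1$ (possible since $d_1(v_i)\ge\beta n^{a-1}$), and let $S(v_i)$ be the subgraph on $v_i$ together with these chosen pendants. Each chosen pendant $w\in A_1$ has degree one, hence lies in exactly one $S(v_i)$; the centers $v_i$ are distinct and, having degree $\ge n/2>1$, do not lie in $A_1$; and $v_0\notin A_1$ for the same reason, while $v_0\ne v_i$ by construction. Hence the stars $S(v_1),\dots,S(v_k)$ are vertex-disjoint and do not contain $v_0$. Adding the edges $\{v_0v_i\}_{1\le i\le k}$ (present on $\kB_n$) exhibits a copy of $S(k;d_1,\dots,d_k)$ as a subgraph of $G_n$, with $k\asymp n^{2-a}$ and $d_i\asymp n^{a-1}$ for all $i$. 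Since $\pn(\kB_n)=1-o(1)$, this completes the proof.
\end{proof}
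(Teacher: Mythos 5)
Your argument is correct and follows the paper's route exactly: the paper's proof is the one-liner that the proposition is a direct consequence of Lemma~\ref{lta1}(iii) and Lemma~\ref{q}, and your write-up simply fills in the implicit bookkeeping — picking $v_0,v_1,\dots,v_k$ from the set produced by Lemma~\ref{q}, invoking Lemma~\ref{lta1}(iii) for the adjacency $v_0\sim v_i$, and checking disjointness of the pendant sets via the degree-one property of vertices in $A_1$. No gap.
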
  
\begin{proof}
This is a direct consequence of Lemma  \ref{lta1} (iii) and Lemma \ref{q}.   
\end{proof} 

\subsubsection{Unbounded degree sequences}
We assume here that the law of the degrees is given by \eqref{paj}. 
The proof of the next result is similar to the one of Lemma \ref{q}, so we omit it. 
\begin{lem}
\label{viri}
With the notation of Lemma \ref{2a<2}, let $(v_i)_{i\le n}$ be a reordering of the vertices of $G_n$, such that the degree of $v_i$ is $D_i$ for all $i$ (in particular $v_1$ is a vertex with maximal degree).  Then for any fixed $i$, 
$$\pn(d_1(v_i)\ge D_i n_1/(2L_n)) = 1-o(1).$$
\end{lem}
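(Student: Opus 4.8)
The plan is to mimic the proof of Lemma \ref{q} exactly, tracking the relevant quantities. Fix $i$ and work conditionally on the degree sequence $(D_z)_{z\in V_n}$, on the event $\kB_n$ on which simultaneously $L_n\asymp n^{1/(a-1)}$, $D_i\asymp n^{1/(a-1)}$ and $n_1 = |A_1|\ge cn$; by Lemma \ref{2a<2}(i) and Lemma \ref{lb} this event has probability $1-o(1)$ (the convergence in law in Lemma \ref{2a<2}(i) gives, for any $\varepsilon$, constants $0<c\le C$ with $\pn(cn^{1/(a-1)}\le L_n, D_i\le Cn^{1/(a-1)})\ge 1-\varepsilon$, and since $i$ is fixed this suffices). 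On $\kB_n$ the target threshold $D_in_1/(2L_n)$ is itself of order $n^{1/(a-1)}\cdot n / n^{1/(a-1)} = \Theta(n)$, i.e. a positive fraction of $|A_1|$.

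First I would compute the conditional first moment of $d_1(v_i)=\sum_{w\in A_1}{\bf 1}(w\sim v_i)$. As in \eqref{wv}, for each $w\in A_1$ the probability that its unique stub pairs with a stub of $v_i$ is $D_i/(L_n-1)$, so $\en(d_1(v_i)\mid (D_z)) = n_1 D_i/(L_n-1) \ge D_in_1/(2L_n)\cdot(1+o(1))$ once $n_1$ and $L_n/D_i$ are large, in fact it is at least, say, $(1-o(1))\cdot n_1 D_i/L_n$ which comfortably exceeds the stated threshold by a factor close to $2$. Next I would bound the conditional variance: exactly as in \eqref{c1}, for $w\ne w'\in A_1$ one has $|\cov({\bf 1}(w\sim v_i),{\bf 1}(w'\sim v_i)\mid(D_z))| = \kO(D_i/L_n^2)$, so summing over the $\kO(n^2)$ pairs and adding the $\kO(n_1 D_i/L_n)$ diagonal terms gives, on $\kB_n$, a conditional variance that is $o\big((D_in_1/L_n)^2\big)$ since $D_i n^2/L_n^2 \asymp n^2\cdot n^{1/(a-1)}/n^{2/(a-1)} = n^{2-1/(a-1)} = o(n^2)$ for $a\in(1,2)$ (here $1/(a-1)>1$). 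Chebyshev's inequality then yields $\pn(d_1(v_i)\ge D_in_1/(2L_n)\mid (D_z)) = 1-o(1)$ on $\kB_n$, and taking expectations over $\kB_n$ finishes the argument.

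The only mildly delicate point is the one already flagged: unlike the case \eqref{pnaj}, here $L_n$ and $D_i$ are genuinely random of order $n^{1/(a-1)}$ rather than deterministic, so one cannot get a full w.h.p. statement for all $i$ uniformly — but since the lemma only claims the conclusion for each \emph{fixed} $i$, restricting to the high-probability event $\kB_n$ supplied by Lemma \ref{2a<2}(i)--(ii) is harmless. I do not expect any real obstacle; the second-moment computation is routine and identical in spirit to Lemma \ref{q}, which is precisely why the authors omit it.
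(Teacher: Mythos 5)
Your proposal is correct and is exactly the route the paper takes: the authors omit the proof of Lemma~\ref{viri} with the remark that it is ``similar to the one of Lemma~\ref{q}'', and your second-moment/Chebyshev computation is that adaptation, with the key observation that on the good event the conditional mean $n_1 D_i/(L_n-1)$ exceeds the random threshold $D_i n_1/(2L_n)$ by a factor close to $2$, while the conditional variance is $\kO(n_1 D_i/L_n) + \kO(n_1^2 D_i/L_n^2) = o\big((n_1D_i/L_n)^2\big)$ once $D_i\to\infty$ and $n_1 D_i/L_n\to\infty$. One small wording issue: $\pn(\kB_n)$ is not literally $1-o(1)$ for fixed constants $c,C$ — as you yourself flag at the end, Lemma~\ref{2a<2}(i)--(ii) only gives $\pn(\kB_n)\ge 1-\varepsilon$ for $n$ large, with $c,C,\eta$ depending on $\varepsilon$ — but since $\varepsilon$ is arbitrary the ``for every $\varepsilon$, $\liminf\ge 1-\varepsilon$'' argument still yields the claimed $1-o(1)$, so the proof stands.
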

\noindent As a consequence we get 
\begin{prop}
Assume that the degree distribution is given by \eqref{paj}, with $a\in(1,2)$. There exists a constant $c>0$, such that 
for any $\varepsilon>0$, there exists $\eta=\eta(\varepsilon)>0$ and an integer $k=k(\varepsilon)$, such that for $n$ large enough, 
with probability at least $1-\varepsilon$, $G_n$ contains as a subgraph a copy of $S(k;d_1,\dots,d_k)$, 
with $d_i\ge \eta i^{-1/(a-1)}n$ for all $i\ge 1$, and $d_1+\dots +d_k\ge cn$. 
\end{prop}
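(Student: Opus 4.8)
The plan is to deduce this proposition from Lemma \ref{2a<2} and Lemma \ref{viri}, exactly as Proposition \ref{stara<2} was deduced from Lemma \ref{lta1} and Lemma \ref{q} in the bounded-degree case, but keeping track of the fact that in the unbounded case all our control on the large degrees is only ``with probability at least $1-\varepsilon$'' rather than ``w.h.p.''. So fix $\varepsilon>0$. First I would use Lemma \ref{2a<2} (ii) to choose an integer $k=k(\varepsilon/4)$ and a constant $\eta_0=\eta_0(\varepsilon/4)>0$ so that, with probability at least $1-\varepsilon/4$ for $n$ large, one has simultaneously $D_i/L_n\ge \eta_0\, i^{-1/(a-1)}$ for all $1\le i\le k$ and $D_1+\dots+D_k\ge L_n/2$. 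On this event, $v_1,\dots,v_k$ are $k$ vertices with ``macroscopic'' degree: $D_i\ge \eta_0 i^{-1/(a-1)} L_n$, and the sum of the star sizes $\sum_i D_i$ is at least $L_n/2$. Since Lemma \ref{2a<2} (i) also gives $L_n/n^{1/(a-1)}\to \gamma_0$ in law with $\gamma_0>0$ a.s., I can enlarge the constants so that additionally $L_n\ge c_0 n$ on an event of probability at least $1-\varepsilon/4$; combining, on an event of probability $\ge 1-\varepsilon/2$ we have $D_i\ge \eta_0 c_0\, i^{-1/(a-1)}n$ for $i\le k$ and $\sum_{i\le k}D_i\ge c_0 n/2=:cn$.

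Next I would produce the star leaves. For each fixed $i\le k$, Lemma \ref{viri} gives $\pn(d_1(v_i)\ge D_i n_1/(2L_n))=1-o(1)$; intersecting over the finitely many $i\le k$, the event $\bigcap_{i\le k}\{d_1(v_i)\ge D_i n_1/(2L_n)\}$ has probability $1-o(1)$, hence probability at least $1-\varepsilon/4$ for $n$ large. By Lemma \ref{lb} we have $n_1\asymp n$ w.h.p., and by Lemma \ref{2a<2} (i), $L_n\asymp n^{1/(a-1)}$ w.h.p.; so on an event of probability $\ge 1-\varepsilon/4$ (for large $n$) we get $d_1(v_i)\ge D_i n_1/(2L_n)\ge c'\, D_i\, n^{1-1/(a-1)}\ge c'\eta_0 c_0\, i^{-1/(a-1)} n$ for all $i\le k$. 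In particular each $v_i$ has at least $c''\, i^{-1/(a-1)} n$ neighbours of degree one; choosing $d_i$ to be (a constant multiple of) $\min(D_i, d_1(v_i))$, which is still $\gtrsim i^{-1/(a-1)}n$, I obtain disjoint stars $S(v_i)$ — disjoint because their leaves have degree one and hence are attached to a unique center, and the centers are distinct — with $S(v_i)$ of size $d_i+1$.

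It remains to hook the stars together through a common vertex $v_0$, and for this I invoke Lemma \ref{2a<2} (iii): with $\kappa>(2-a)/(a-1)$ fixed, w.h.p. every pair $v,w$ with $D_v\ge n$ and $D_w\ge n^\kappa$ is adjacent. Since each $D_i\gtrsim i^{-1/(a-1)}n\ge (\text{const})\, n$ for $i\le k$ (recall $k$ is \emph{fixed}), all of $v_1,\dots,v_k$ have degree $\ge n$ for $n$ large, and in particular $v_1$ and $v_i$ are adjacent for every $2\le i\le k$ on an event of probability $1-o(1)$. Taking $v_0:=v_1$ and relabelling, we get edges between $v_0$ and all the $v_i$'s, $1\le i\le k-1$, so $G_n$ contains a copy of $S(k-1;d_1,\dots,d_{k-1})$ with $d_i\gtrsim i^{-1/(a-1)}n$ and $\sum d_i\gtrsim n$ (up to adjusting $k$ and renaming constants, this is exactly the claimed statement). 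Intersecting the four events above, each of probability at least $1-\varepsilon/4$ for $n$ large, gives the conclusion with probability at least $1-\varepsilon$.

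The only genuinely delicate point — and the one I would be most careful about — is the bookkeeping of constants and the ``$1-\varepsilon$ versus w.h.p.'' dichotomy: Lemma \ref{2a<2} (ii) is the sole place where $k$ and $\eta$ must be allowed to depend on $\varepsilon$, while everything else (Lemma \ref{lb}, and parts (i), (iii) of Lemma \ref{2a<2}, and Lemma \ref{viri} for the finitely many fixed indices) holds w.h.p. and therefore costs nothing asymptotically. One also has to make sure the stars are genuinely vertex-disjoint and disjoint from the edges $v_0\sim v_i$; since the leaves are degree-one vertices this is automatic, but it should be stated. Modulo these routine verifications, the proof is a direct concatenation of the cited lemmas, just as the authors indicate.
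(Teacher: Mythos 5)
Your overall strategy matches the paper's, but there is a genuine gap in the choice of hub vertex that breaks the universality of the constant $c$.

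You take $v_0 := v_1$ (the maximal-degree vertex) as the hub and so are left with $v_2,\dots,v_k$ as the star centers. After relabelling, the total leaf count is $\sum_{i=2}^k d_1(v_i) \ge \frac{n_1}{2L_n}\sum_{i=2}^k D_i$, so you need $\sum_{i=2}^k D_i$ to be a \emph{universal} positive fraction of $L_n$. But Lemma~\ref{2a<2}~(ii) only gives $D_1+\dots+D_k\ge L_n/2$, and once $D_1$ is removed, the remainder $\sum_{i\ge 2}D_i = L_n-D_1$ has no lower bound of the form $\delta L_n$ with $\delta$ independent of $\varepsilon$: by Lemma~\ref{2a<2}~(i), $D_1/L_n\to \gamma_1/\gamma_0$ in law, and $\gamma_1/\gamma_0$ takes values arbitrarily close to $1$ with small but positive probability, so any lower bound $\pn(L_n-D_1\ge\delta L_n)\ge 1-\varepsilon$ forces $\delta=\delta(\varepsilon)\to 0$ as $\varepsilon\to 0$. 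Thus ``adjusting $k$ and renaming constants'' does not repair this: the claimed $\sum_i d_i \ge cn$ would then have $c$ depending on $\varepsilon$, whereas the proposition requires $c$ to be fixed \emph{before} $\varepsilon$.

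The paper's proof avoids this by invoking Lemma~\ref{2a<2}~(ii) for the first $k+1$ degree-ranked vertices (so that $D_{k+1}/L_n\ge\eta(k+1)^{-1/(a-1)}$ too) and then taking $v_{k+1}$ as the hub, connected to $v_1,\dots,v_k$ by the argument of Lemma~\ref{lta1}~(iii). This keeps \emph{all} of $v_1,\dots,v_k$ as star centers, so $\sum_{i\le k}d_1(v_i)\ge \frac{n_1}{2L_n}(D_1+\dots+D_k)\ge n_1/4\ge cn$ with $c$ coming only from Lemma~\ref{lb}, hence universal. (Using $v_k$ as the hub would also work, since $D_k\le L_n/k$ gives $D_1+\dots+D_{k-1}\ge(1/2-1/k)L_n$, but using $v_1$ as the hub is exactly the one choice that fails.) Apart from this point, your bookkeeping of the $\varepsilon$-events, your use of Lemma~\ref{viri} and Lemma~\ref{lb}, and your application of Lemma~\ref{2a<2}~(iii) for the connectivity are all in line with the paper's argument.
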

\begin{proof}
It follows from Lemma \ref{viri} that for any $i$,
\begin{align*}
\pn(d_1(v_i)\ge D_i n_1/(2L_n)) = 1-o(1).
\end{align*}
Hence for any fixed $k$,
\begin{align*}
\pn \left( d_1(v_1) + \ldots + d_1(v_k)\ge \frac{n_1(D_1+ \ldots+ D_k)} { 2 L_n}  \right)= 1-o(1).
\end{align*}
Moreover, by Lemma \ref{lb} we have $\pn(n_1 \in (cn, Cn)) =1-o(1)$. On the other hand, for any $\varepsilon >0$, by Lemma \ref{2a<2} (ii), there exist $\eta= \eta(\varepsilon)$ and $k=k(\varepsilon)$, such that
\begin{align*}
\pn \left( \frac{D_1+ \ldots + D_k}{L_n } \geq \frac{1}{2} \right) &  \geq 1- \varepsilon/4, \\ 
 \pn(D_i/ L_n \geq \eta i^{-1/(a-1)} \quad \forall \, i \leq k+1)  & \geq 1- \varepsilon/4.
\end{align*}
 Therefore with probability at least $1- (3\varepsilon/4)$, for $n$ large enough, $\eta$ and $k$ as above,
 \begin{align*}
d_1(v_1) + \ldots + d_1(v_k) \geq cn/4, \\
 d_1(v_i) \geq c \eta i^{-1/(a-1)} n/2 \, \, \forall \,  i \leq k+1.
 \end{align*}
Then by using a similar argument as in the proof of Lemma \ref{lta1} (iii), we can show that with probability larger than $1- (\varepsilon/4)$, $v_{k+1}$ and $v_i$ are connected for all $i \leq k$. The result follows. 
\end{proof}

\subsection{Case $a=2$}
In this case we can treat both distributions \eqref{pnaj} and \eqref{paj} in the same way. 
Recall that $E'=\{v :  D_v \geq n^{3/4}\}$, and that $d_1(v)$ denotes the number of neighbors in $A_1$ of a vertex $v$. 

\begin{lem} \label{ex4}
There exists a positive constant $\beta$, such that
\begin{equation} \label{e17}
\pn\left(d_1(v) \ge \beta D_v/\log n \quad \textrm{for all }v\in E'\right) =1-o(1).   
\end{equation}
\end{lem}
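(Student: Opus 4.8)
The plan is to follow the proof of Lemma \ref{q}, the only structural difference being that here the set $E'$ is of polynomial size $n^{1/4}$, so that once the failure probability for a single vertex is controlled, one can conclude by a plain union bound over $E'$ rather than by a second application of Chebyshev's inequality. Working conditionally on the degree sequence $(D_z)_{z\in V_n}$, formula \eqref{wv} gives $\pn(w\sim v\mid (D_z)) = D_v/(L_n-1)$ for every $w\in A_1$ and $v\in E'$, hence
$$\en(d_1(v)\mid (D_z)) = \frac{|A_1|\,D_v}{L_n-1},$$
while \eqref{c1}, together with the trivial bound $\textrm{Var}({\bf 1}(\{w\sim v\})\mid(D_z))\le D_v/(L_n-1)$, yields
$$\textrm{Var}(d_1(v)\mid (D_z)) \le \frac{|A_1|\,D_v}{L_n-1} + \sum_{w\neq w'\in A_1}\left|\cov({\bf 1}(\{w\sim v\}),{\bf 1}(\{w'\sim v\})\mid(D_z))\right| = \mathcal{O}\!\left(\frac{|A_1|\,D_v}{L_n}\right) + \mathcal{O}\!\left(\frac{|A_1|^2\,D_v}{L_n^2}\right).$$

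Next I would introduce the good event $\kA_n := \{cn\log n\le L_n\le Cn\log n\}\cap\{|A_1|\ge cn\}\cap\{|E'|\le Cn^{1/4}\}$, which satisfies $\pn(\kA_n)=1-o(1)$ by Lemmas \ref{lb} and \ref{ltb1}. On $\kA_n$ we have $\en(d_1(v)\mid (D_z))\ge (c/C)\,D_v/\log n$ and, since the covariance contribution is of the smaller order $D_v/(\log n)^2$, also $\textrm{Var}(d_1(v)\mid (D_z)) = \mathcal{O}(D_v/\log n)$, with constants independent of $v$. Choosing $\beta := c/(2C)$, so that $\beta\,D_v/\log n\le \tfrac12\,\en(d_1(v)\mid(D_z))$ on $\kA_n$, Chebyshev's inequality gives, uniformly in $v\in E'$,
$$\pn\!\left(d_1(v)<\beta\,\frac{D_v}{\log n}\ \Big|\ (D_z)\right)\ \le\ \frac{4\,\textrm{Var}(d_1(v)\mid(D_z))}{\en(d_1(v)\mid(D_z))^2}\ =\ \mathcal{O}\!\left(\frac{\log n}{D_v}\right)\ =\ \mathcal{O}\!\left(\frac{\log n}{n^{3/4}}\right),$$
where the last equality uses $D_v\ge n^{3/4}$ for $v\in E'$.

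Finally, a union bound over the at most $Cn^{1/4}$ vertices of $E'$ gives, on $\kA_n$,
$$\pn\!\left(\exists\, v\in E':\ d_1(v)<\beta\,\frac{D_v}{\log n}\ \Big|\ (D_z)\right)\ \le\ Cn^{1/4}\cdot\mathcal{O}\!\left(\frac{\log n}{n^{3/4}}\right)\ =\ o(1),$$
and \eqref{e17} follows after taking expectations and using $\pn(\kA_n)=1-o(1)$. The argument is identical for both degree laws \eqref{pnaj} and \eqref{paj} with $a=2$ (for the latter one could, although it is not needed here, also invoke the crude fact that all degrees are at most $n\log\log n$ w.h.p.). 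There is no real obstacle: the only point to keep in mind is that the union bound closes precisely because $|E'|\asymp n^{1/4}$ is polynomially small compared with the inverse of the per-vertex failure probability $\mathcal{O}(\log n/n^{3/4})$, which is why — in contrast with Lemma \ref{q} — no decorrelation estimate between distinct vertices of $E'$ is required.
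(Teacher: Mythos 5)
Your proof is correct and follows essentially the same approach as the paper: conditional first and second moments of $d_1(v)$ via formulas \eqref{wv}--\eqref{c1}, Chebyshev to get a per-vertex failure probability of order $\log n/D_v = O(\log n/n^{3/4})$, and a union bound over the $O(n^{1/4})$ vertices of $E'$. The observation that no decorrelation between distinct vertices of $E'$ is needed (unlike in Lemma \ref{q}) is exactly the point the paper exploits as well.
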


\begin{proof}
The proof is very close to the proof of Lemma \ref{q}. 
First, for any $v\in E'$ and $w\in A_1$, we have  
$$ \pn(w  \sim v\mid (D_z)) \asymp  \frac{ D_v}{L_n} ,$$
and furthermore for any $w\neq w'\in A_1$, 
$$|\cov(w \sim v, w' \sim v\mid (D_z)) | = \kO\left(\frac{D_v}{L_n^2}  \right).$$
Then by using Chebyshev's inequality, we get that for any $v\in E'$, 
 $$ \pn \left(d_1(v) \leq \beta D_v n_1/ L_n \mid (D_z) \right) =\kO\left(\frac{L_n}{n_1D_v}\right),$$
for some constant $\beta>0$. The desired result follows by using a union bound and then Lemma \ref{lb} and \ref{ltb1} (i)-(ii).  
\end{proof}
\noindent As a consequence we get 
\begin{prop}
\label{stara2}
Assume that the law of the degree distribution is given either by \eqref{pnaj} or \eqref{paj} and that $a=2$. 
There exists a positive constant $\beta$ such that w.h.p. $G_n$ contains 
as a subgraph a copy of $S(k;d_1,\dots, d_k)$, with $k\asymp n^{1/4}$, $d_i\ge \beta n^{3/4}/\log n$ for all $i\le k$, and $d_1+\dots + d_k \asymp n$.  
\end{prop}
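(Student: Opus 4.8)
The plan is to combine Lemma \ref{ex4} with the structural estimates of Lemma \ref{ltb1}, following the same scheme as in the proof of Proposition \ref{stara<2}. First I would fix $v_0$ to be a vertex of maximal degree in $G_n$. A short computation using the tails $p_{n,2}(j),p_2(j)\asymp j^{-2}$ gives $\pn(D_v\ge n/\log n)\asymp (\log n)/n$, hence $\pn(D_{v_0}< n/\log n)=\pn(D_v< n/\log n)^n\le (1-c(\log n)/n)^n=o(1)$; so w.h.p. $D_{v_0}\ge n/\log n$, and in particular $v_0\in E'$. Then I would let $v_1,\dots,v_k$ be an enumeration of $E'\setminus\{v_0\}$, so that by Lemma \ref{ltb1}(ii), w.h.p. $k=|E'|-1\asymp n^{1/4}$, and for each $i\le k$ set $d_i:=d_1(v_i)$, the number of neighbors of $v_i$ in $A_1$, letting $S(v_i)$ be the star formed by $v_i$ together with those $d_i$ degree-one neighbors.

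The lower bound on each branch size is immediate from Lemma \ref{ex4}: w.h.p. $d_i\ge \beta D_{v_i}/\log n\ge \beta n^{3/4}/\log n$ for every $i\le k$, since $D_{v_i}\ge n^{3/4}$. For the total size, on the one hand each vertex of $A_1$ has degree one and is therefore a neighbor of at most one $v_i$, so $d_1+\dots+d_k\le |A_1|\le n$ deterministically; on the other hand, summing the bound of Lemma \ref{ex4},
\[
d_1+\dots+d_k\ \ge\ \frac{\beta}{\log n}\sum_{v\in E'\setminus\{v_0\}}D_v\ \ge\ \frac{\beta}{\log n}\Big(\sum_{v\in E'}D_v-D_{v_0}\Big),
\]
and since $\sum_{v\in E'}D_v\asymp n\log n$ w.h.p. by Lemma \ref{ltb1}(ii) while $D_{v_0}$ is at most the maximal degree, which is $o(n\log n)$ w.h.p. (bounded by $n$ in case \eqref{pnaj}, by $n\log\log n$ in case \eqref{paj}), we get $d_1+\dots+d_k\asymp n$ w.h.p.

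Finally I would check that these pieces assemble into a copy of $S(k;d_1,\dots,d_k)$. The stars $S(v_i)$ are pairwise disjoint because their leaves have degree one, and they avoid $v_0$ because $D_{v_0}>1$; loops at the $v_i$'s are irrelevant, as $d_1(v_i)$ only counts neighbors lying in $A_1$. It remains to connect each $v_i$ to $v_0$: since $D_{v_0}\ge n/\log n$ and $D_{v_i}\ge n^{3/4}\ge (\log n)^4$ for $n$ large, Lemma \ref{ltb1}(iii) gives $v_0\sim v_i$ for all $i\le k$ with probability $1-o(1)$. Intersecting the finitely many w.h.p. events used above yields the claim. The only point requiring genuine care — rather than a direct citation — is the existence of the high-degree hub $v_0$ together with the verification that removing its contribution does not spoil the $\asymp n$ lower bound on $d_1+\dots+d_k$; everything else is bookkeeping on top of Lemmas \ref{ex4} and \ref{ltb1}.
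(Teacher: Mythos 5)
Your proof is correct and follows the same route as the paper's, which simply cites Lemma \ref{ltb1}(ii)--(iii) together with Lemma \ref{ex4}. You actually supply more detail than the paper: the explicit choice of $v_0$ as the maximal-degree vertex (essentially forced, since $v_0$ must have degree at least $n/\log n$ to apply Lemma \ref{ltb1}(iii) against the $v_i$'s, whose degrees are only guaranteed to be $\ge n^{3/4}$), and the check that removing $D_{v_0}$ from $\sum_{v\in E'}D_v$ does not spoil the order-$n$ lower bound.
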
  
\begin{proof}
Just take for the $v_i$'s the elements of $E'$. Then use Lemma  \ref{ltb1} (ii)-(iii) and Lemma \ref{ex4}.   
\end{proof}

\section{Contact process on a two-step star graph}
In this section we will study the contact process on a two-step star graph. 
Our main result is the following:

\begin{prop} \label{psta}
There exist positive constants $c$ and $C$, such that for any two-step star graph $G=S(k;d_1,\dots,d_k)$, satisfying $d_i \geq C \log n/\lambda^2$, for all $i\le k$, and $d_1+...+d_k = n$, 
\begin{align*}
\pn\left(\tau_n^{v_1} \geq \exp(c  \lambda^2 n)\right) = 1- o(1),
\end{align*}
where $\tau_n^{v_1}$ is the extinction time of the contact process with infection parameter $\lambda\le 1$ starting from $v_1$ on $S(k;d_1,\dots,d_k)$. 
\end{prop}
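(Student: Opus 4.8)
The plan is to establish that the infection survives for an exponentially long time by a two–layer argument in the spirit of Chatterjee–Durrett's lit-stars machinery. First I would record the basic single-star estimate: on a star graph with center of degree $d$, if the center is infected, then with probability at least $1-\exp(-c\lambda^2 d)$ the star keeps its center infected for a time at least $\exp(c\lambda^2 d)$, and in fact during such an excursion the center spends a positive fraction of time infected, so that it keeps roughly $\asymp \lambda d$ of its leaves infected at a typical time. (This is the standard fact that the contact process on a star with degree $d$ survives for time $\exp(\Omega(\lambda^2 d))$; one proves it by comparison with a birth-and-death chain for the number of infected leaves, which has a drift toward $\asymp \lambda d/(1+\lambda)$ leaves and thus a long return time to $0$ once the center has been reinfected.) Since $d_i \ge C\log n/\lambda^2$, each star $S(v_i)$ individually survives for time at least $\exp(c\lambda^2 d_i) \ge n^{cC}$, which is polynomially large, and moreover once $v_1$ is infected it quickly lights up its own star.

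\textbf{Definition of lit stars and the comparison chain.} Following \cite{CD}, I would say the star $S(v_i)$ is \emph{lit} at time $t$ if at least $\beta \lambda d_i$ of its leaves are infected, for a suitable small constant $\beta$. The key lemmas to prove are: (a) once $S(v_i)$ becomes lit, it stays lit for a time stochastically dominated below by $\exp(c\lambda^2 d_i)$ with probability $1-\exp(-c\lambda^2 d_i)$; (b) while $S(v_i)$ is lit, the center $v_i$ is infected for a positive fraction of every time interval of length $O(1)$, and therefore it sends infection to $v_0$ at rate $\asymp \lambda$; (c) whenever $v_0$ is infected, it infects a given neighbor $v_j$ (whose star is currently unlit) within time $O(1/\lambda)$ with probability $\asymp \lambda$, and such a single infection of $v_j$ lights up $S(v_j)$ with probability bounded below (again by the single-star analysis, using $d_j \ge C\log n/\lambda^2$). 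Combining (a)–(c), the collection of lit stars performs, on the relevant time scale, a process that from "at least one star lit" relights any fixed dark star before all lit stars die, with overwhelming probability, because the death of a lit star requires time $\ge \exp(c\lambda^2 d_i)$ while relighting a neighbor takes time only polynomial in... — wait, that is too crude; rather, one runs the comparison on a coarse time grid of mesh $\exp(c\lambda^2 \min_i d_i) \gg 1$ and shows that the number of lit stars dominates a supercritical birth-and-death / contact-process-type chain on $\{0,1,\dots,k\}$ whose "up" rate (relighting) dominates its "down" rate (a lit star dying), so that the chain hits $0$ only after time $\exp(\Omega(k))$ — but here I must be careful because $k$ need not be large.

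\textbf{Getting the exponent $n$ rather than $k$.} The point where more care is needed — and what I expect to be the main obstacle — is that we want survival time $\exp(c\lambda^2 n)$ with $n = d_1+\dots+d_k$, not merely $\exp(c\,k)$ or $\exp(c\lambda^2 \max_i d_i)$. The honest way to get the full sum in the exponent is to argue that extinction of the whole two-step star graph forces, in particular, \emph{simultaneous} death of all $k$ stars within a short window, or more efficiently: run the process on the grid of mesh $T_0 := \exp(c_0 \lambda^2 \min_i d_i)$ and show that at each grid step, given at least one lit star, the conditional probability that the configuration is "healthy" (all stars lit) at the next grid point is $1 - o(1)$ uniformly, because relighting a single dark neighbor within time $T_0$ has probability $1-\exp(-c\lambda^2 d_j)$ (one lit star pumps infection into $v_0$ for the whole duration $T_0$, and $T_0$ is enormous compared to the $O(1/\lambda^2 \cdot \log d_j) = O(1/\lambda^2)\cdot O(\log n)$ time typically needed to relight). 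Thus from the fully-lit state the system returns to the fully-lit state after each step with probability $1-ke^{-c\lambda^2\min d_i} = 1-o(1)$, so it survives for $\exp(\Omega(\lambda^2 \min_i d_i))$ grid steps trivially; to upgrade the exponent to $\lambda^2 n$ one instead observes that from the fully lit state, the time for \emph{all} stars to die is at least the time for the \emph{longest-living} star to die \emph{without being relit}, but relighting is so likely that the dominant contribution is: the product over $i$ is not right either. The cleanest route, which I would adopt, is Chatterjee–Durrett's: show the lit stars' process dominates a contact process on the "star of stars" that is supercritical, and bound its extinction time from below by a direct large-deviations / block argument giving $\exp(c \sum_i \lambda^2 d_i) = \exp(c\lambda^2 n)$, using that each individual star contributes an independent $\exp(c\lambda^2 d_i)$ factor to the cost of a space-time path to extinction. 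I would carry this out by a renewal/restart argument: each time a star dies it is independently relit within time $O(1)$ w.h.p. by the still-lit majority, so the number of "deaths" before global extinction dominates a geometric variable with success probability $\prod_i(1-e^{-c\lambda^2 d_i})^{-1}$-type bound — formally, couple with independent excursions and use that global extinction needs a cut in space-time crossing all $k$ layers, each layer crossing costing probability $e^{-c\lambda^2 d_i}$, whence $\pn(\tau < \exp(c\lambda^2 n)) \le \exp(-c\lambda^2 n/2)$ by a union bound over the (at most exponential in $k$) combinatorial choices of such a cut. The main obstacle throughout is keeping the constants uniform and making the "relighting" step rigorous when the intermediate vertex $v_0$ has low degree and so is only infected intermittently; this is handled by the self-correcting drift of the lit-star chain, exactly as in \cite{CD}, and I would cite that machinery rather than redo it in full.
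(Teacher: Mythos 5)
You have the right intuitions — the lit-star machinery, the single-star survival estimate, the relighting mechanism through $v_0$, and crucially the observation that the failure cost should factorize over the stars to yield $\exp(-c\lambda^2\sum_i d_i)=\exp(-c\lambda^2 n)$ rather than something like $\exp(-ck)$. That factorization is indeed the heart of the paper's proof. But your proposal never closes the argument: the decisive step is replaced by a metaphor (``a cut in space-time crossing all $k$ layers, each layer costing $e^{-c\lambda^2 d_i}$, union bound over exponentially-in-$k$ many cuts'') that is never defined. Without a precise definition of a cut, you cannot bound its probability, you cannot verify independence across the ``layers,'' and you cannot enumerate the cuts to run a union bound. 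Moreover, the very naive version of your ``fully-lit to fully-lit'' grid argument only yields survival for $\exp(\Omega(\lambda^2\min_i d_i))$ steps, which you notice is insufficient, and the two subsequent attempts to repair it are both retracted mid-sentence. So the proposal is directionally correct but has a genuine gap exactly at the step it itself identifies as the main obstacle.

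The paper closes this gap with a clean block-chaining scheme that you may find instructive to compare. It first disposes of the easy case $\max_i d_i\geq n/2$, so that in the main case $\sum_{j\neq i}d_j\geq n/2$ for every $i$. Time is cut into blocks of length $n^2$, and for each block $\ell$ and star $i$ one defines $E_{\ell,i}$ = ``the infection survives \emph{inside} $S(v_i)$ throughout block $\ell$.'' The key inequality is $\pn\bigl(E_{\ell,i}\cap\bigcap_{j\neq i}E_{\ell+1,j}^c\bigr)\leq\exp(-c\lambda^2 n)$, which says: if some star carries the infection through a block, it is exponentially unlikely that every other star fails to carry it through the next block. To prove it one shows (via a comparison with a geometric, not by citing external machinery) that $E_{\ell,i}$ forces $v_0$ to be continuously infected on at least $n/3$ of the $n^2$ unit subintervals with probability $1-\exp(-c\lambda^2 n)$; on each such subinterval there is probability $\geq c\lambda$ of lighting any fixed $v_j$ via an event depending only on Poisson processes in $S(v_j)\cup\{v_0v_j\}$; consequently all $v_j$ are lit during the block up to probability $\exp(-c\lambda n)$; and finally, \emph{conditionally on being lit}, the events $E_{\ell+1,j}^c$ are independent over $j$ and each has probability $\leq U_j^{-1}:=\exp(-c\lambda^2 d_j)$ (Lemma~\ref{lst}(i), with $C$ chosen so $U_j\geq 2n^2$). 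Taking the product gives $\prod_{j\neq i}U_j^{-1}\leq\exp(-c\lambda^2 n/2)$, which is the precise form of your ``cost factorizes over stars'' heuristic. Chaining this over $\exp(c\lambda^2 n)/n^2$ blocks and union-bounding over blocks and over the surviving index $i\leq k$ yields the result; the number of terms in the union bound is $\mathcal{O}(k\exp(c\lambda^2 n)/n^2)$, which is harmless, so no count of ``combinatorial cuts'' is ever needed. That precise setup — the events $E_{\ell,i}$, the quantification of how often $v_0$ is infected, and the conditional independence across stars — is exactly what your proposal is missing.
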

Note that since we are only concerned with the extinction time here, there is no restriction in assuming $\lambda \le 1$, 
as the contact process is stochastically monotone in $\lambda$ (see \cite{L}). So when $\lambda>1$ the same result holds; 
one just has to remove the $\lambda$ everywhere in the statement of the proposition. 

\vspace{0.2cm} 
Now of course an important step in the proof is to understand the behavior of the process 
on a single star graph. This has already been studied for a long time, for instance it appears in Pemantle \cite{P}, 
and later in \cite{BBCS, CD, MVY}. We will collect all the results we need in Lemma \ref{lst} below, but before 
that we give some new definition. We say that a vertex $v$ is {\bf lit} (the term is taken from \cite{CD}) at some time $t$ if the proportion of its infected neighbors 
at time $t$ is larger than $\lambda/(16e)$ (note that in \cite{MMVY} the authors also use the term {\it infested} 
for a similar notion). 

\begin{lem} \label{lst}
There exists a constant $c\in (0,1)$, such that if $(\xi_t)$ is the contact process with parameter $\lambda\le 1$ 
on a star graph $S$ with center $v$, satisfying $\lambda^2 |S|\geq  64 e^2$, then 
\vspace{0.15cm}
\begin{itemize}
\item[(i)] 
$\pn(\xi_{\exp(c \lambda^2 |S|)} \neq \varnothing \mid v \textrm{ is lit at time } 0) \geq 1-\exp(-c \lambda^2 |S|)$, \\ 
\item[(ii)] 
$\pn(\exists t>0 : v \textrm{ is lit at time }t\mid \xi_0(v)=1)\to 1\qquad \textrm{as }|S|\to \infty$.  \\
\item[(iii)] 
$\pn( v \textrm{ is lit at time } 1 \mid \xi_0(v)=1)\geq  (1- \exp(-c \lambda |S|))/e$,\\
\item[(iv)] $\pn(v \textrm{ lit during }[\exp(c \lambda^2 |S|), 2\exp(c \lambda^2 |S|)]\mid v \textrm{ lit at time } 0)\ge 1- 2 \exp (-c \lambda ^2 |S|)$.  
\end{itemize}
\end{lem}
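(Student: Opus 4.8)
The four assertions are the standard quantitative facts about the contact process on a star graph $S$ with center $v$ and $m := |S|-1$ leaves; I collect them here because the proof of Proposition \ref{psta} rests entirely on them. The guiding idea throughout is a comparison of the number $I_t$ of infected leaves with a birth-and-death chain, exploiting the self-reinforcing mechanism: when a positive fraction of leaves is infected, the center is reinfected at rate $\asymp \lambda m$, i.e. much faster than the recovery rate $1$, so the center spends an overwhelming fraction of time in state $1$, which in turn keeps the leaves infected at density $\asymp \lambda m/(\lambda m+1)\asymp 1$ (for $\lambda m$ large). Quantitatively, when the center is infected each healthy leaf becomes infected at rate $\lambda$ and each infected leaf recovers at rate $1$, so $I_t$ is pushed toward its equilibrium value $\asymp \lambda m$; the ``lit'' threshold $\lambda/(16e)$ is chosen comfortably below this equilibrium density, leaving a drift that restores the lit condition after small excursions.

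For (i): assuming $v$ is lit at time $0$, I would run the following renewal argument on unit-length time intervals. On an interval of length $1$, with probability bounded below by a constant the center gets infected (from one of its $\geq \lambda m/(16e)$ infected neighbours, using $\lambda^2 m$ large) and stays infected for, say, time $1/2$; during such a window the number of infected leaves has positive probability $\geq 1-\exp(-c\lambda^2 m)$ of being restored above the lit threshold, by a large-deviation estimate (\eqref{ld}) for the relevant binomial comparison. Thus in each unit interval the process remains ``active'' with probability $\geq 1-\exp(-c\lambda^2 m)$, and iterating over $\exp(c\lambda^2 m)$ intervals and a union bound gives that the infection survives up to time $\exp(c\lambda^2 m)$ with probability $\geq 1-\exp(-c\lambda^2 m)$ (after adjusting constants). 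Assertion (iv) is proved by exactly the same mechanism, simply noting that the event ``$v$ lit at some time in $[\exp(c\lambda^2 m),2\exp(c\lambda^2 m)]$'' fails only if the infection dies out or stays unlit throughout, which has probability $\leq 2\exp(-c\lambda^2 m)$ by the same renewal estimate applied on the shifted interval together with the Markov property.

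For (iii): starting from $\xi_0(v)=1$, with probability exactly $1/e$ the center does not recover during $[0,1]$; on this event, each of the $m$ leaves is independently infected by time $1$ with probability $1-e^{-\lambda}\geq \lambda/2$, so the number of infected leaves at time $1$ stochastically dominates a $\bin(m,\lambda/2)$ variable, which by \eqref{ld} exceeds $\lambda m/(16e)$ with probability $\geq 1-\exp(-c\lambda m)$; combining the two events gives (iii). For (ii): starting from a single infection at the center, with probability bounded below (a constant, by a cruder version of (iii)) the process reaches a configuration where $v$ is lit within a bounded time, and one can iterate: conditionally on not yet having been lit and not yet extinct, there is a uniformly positive chance to become lit in the next unit of time — but to get the probability to tend to $1$ as $m\to\infty$ one instead argues that the only way to fail is early extinction, whose probability one bounds above by a quantity tending to $0$ (e.g. via (iii): extinction before the center is ever lit forces the infection to vanish within the first unit interval before spreading, an event of probability $\to 0$ since a single infected center infects $\Theta(\lambda m)$ leaves before recovering with high probability). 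The main obstacle is assertion (ii), because it is the only purely qualitative (non-exponential) statement and requires ruling out all early-death scenarios uniformly; the cleanest route is to show $\pn(v\text{ ever lit}\mid \xi_0(v)=1)\geq 1-o(1)$ by observing that failure implies the infection dies before the first time the leaf-count reaches the lit threshold, and bounding this by the probability that a $\bin(m,\lambda/2)$-type count stays below $\lambda m/(16e)$, which is $\exp(-c\lambda m)=o(1)$. All the binomial tail inputs are immediate from \eqref{ld}, and the comparisons of $I_t$ with birth-and-death chains are the routine couplings; I would state the renewal lemma once and reuse it for (i) and (iv).
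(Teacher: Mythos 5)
The paper does not actually prove this lemma: it simply cites \cite{MVY} Lemma~3.1 for parts (i)--(iii) and \cite{C} for part (iv). So your proposal is, by construction, a different route---a self-contained sketch rather than a citation. Your outlines of (i), (iii) and (iv) are in the spirit of the standard birth-and-death/renewal arguments in \cite{MVY, P, BBCS} and are broadly sound, modulo small imprecisions (for (iii), the count of infected leaves \emph{at} time $1$ is not $\bin(m,1-e^{-\lambda})$; one should use the stationary-type probability $\frac{\lambda}{\lambda+1}(1-e^{-(\lambda+1)})$ for a leaf to be infected at time $1$ given the center stays infected, which is still $\Theta(\lambda)$ and feeds the same large-deviation bound).

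Part (ii), however, has a genuine gap, and you flagged it yourself as the ``main obstacle''. Your proposed route---``failure implies the infection dies before the leaf-count reaches the lit threshold, and bound this by the probability that a $\bin(m,\lambda/2)$-type count stays below $\lambda m/(16e)$''---conflates the one-shot binomial in (iii) with the dynamic event in (ii). The binomial bound in (iii) is only available after conditioning on the center staying infected for a unit of time, an event of probability $1/e$, so (iii) as stated yields a probability bounded below by roughly $1/e$, not $1-o(1)$. If the center recovers early, say at time $s\ll 1$, only $\kO(\lambda m s)$ leaves are infected, possibly far below the lit threshold, and the process can then cycle through several reinfections of the center, drift down, and die out without ever being lit; that failure is not a single binomial event. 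The genuine proof of (ii) needs an iterated/gambler's-ruin argument: when the center is healthy with $r$ infected leaves, it is reinfected before all leaves recover except with probability $(1+\lambda)^{-r}$; while the center is infected, the leaf count has positive drift toward $\lambda m/(1+\lambda)$. Chaining these shows the leaf count forms a supercritical random walk, so the probability of hitting the lit threshold before $0$ is $1-o(1)$ as $m\to\infty$. That iteration is the missing idea.
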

\begin{proof}
Parts (i), (ii) and (iii) are exactly Lemma 3.1 in \cite{MVY}, and (iv) can be proved similarly, see for instance \cite{C} (similar results can be found in \cite{BBCS, CD, D,P}). 
\end{proof}

\noindent \textit{Proof of Proposition \ref{psta}.} 
We first handle the easy case when there is some $1\le i \le k$, 
such that $\deg(v_i) \geq n/2$. First by Lemma \ref{lst} we know that w.h.p. the virus survives inside $S(v_1)$ at least a time 
$\exp(c\lambda^2 d_1)$. Since by hypothesis $d_1$ diverges when $n$ tends to infinity, and since $v_1$ and $v_i$ 
are at distance at most two (both are connected to $v_0$), 
we deduce that w.h.p. $v_i$ will be infected before the extinction of the virus. 
The proposition follows by another use of Lemma \ref{lst}.

\vspace{0.2cm}
We now  assume that $d_i \leq n/2$, for all $i$. 
First we need to introduce some more notation.
For $s<t$ and $v,w \in S(v_i)$, we write 
\begin{equation}
\label{iconnect}
(v,s)\mathop{\longleftrightarrow}^{(i)} (w,t),
\end{equation}
if there exists an infection path entirely inside $S(v_i)$ joining $(v,s)$ and $(w,t)$. 
Similarly if $V$ and $W$ are two subsets of $G$, 
we write 
$$V\times \{s\} \mathop{\longleftrightarrow}^{(i)} W \times \{t\},$$
if there exists $v\in V\cap S(v_i)$ and $w\in W\cap S(v_i)$, such that \eqref{iconnect} holds. 
Now for  $\ell \geq 0$ and $1\leq i \leq k$ define
\begin{align*}
E_{\ell , i}: =  \left\{\xi_{\ell n^2} \times \{\ell n^2\} \mathop{\longleftrightarrow}^{(i)} S(v_i)\times \{(\ell +1)n^2\} \right\}.
\end{align*}
We claim that for any $\ell \geq 0$ and $1 \leq i \leq k$, we have
\begin{align} \label{ps1}
\pn\big(E_{\ell,i} \cap \left(\cap_{j \neq i} E_{\ell+1,j}^c\right) \big) \leq \exp(- c \lambda^2 n),
\end{align}
for some constant $c>0$. To fix ideas we will prove the claim for $i=1$ (clearly by symmetry there is no loss of 
generality in assuming this) and to simplify notation we also assume that $\ell =0$ (the proof works the same for any $\ell$).
Furthermore, in the whole proof the notation $c$ will stand for a positive constant independent of $\lambda$, 
whose value might change from line to line. 

\vspace{0.2cm}
\noindent Now before we start the proof we give a new definition. We denote by $(\xi'_t)_{t\ge 0}$ the contact process on $\overline S(v_1):=S(v_1)\cup \{v_0\}$, which is defined  
by using the same Poisson processes as $\xi$, but only on this subgraph. In particular with $\xi'$, the vertex $v_0$ can only be infected by $v_1$, 
and thus the restriction of $\xi$ on $\overline S(v_1)$ dominates $\xi'$. We also assume that the starting configurations of $\xi'$ and of the restriction 
of $\xi$ on $\overline S(v_1)$ are the same.   
Now for any integer $m \leq n$, define 
$$G_m=\left\{ \xi'_t(v_0)=1 \textrm{ for all } t\in [3m+2,3m+3]\right\}.$$
Let also $\kF_t=\sigma(\xi'_s,s\le t)$ be the natural filtration of the process $\xi'$. Then 
observe that for any vertex $w\in S(v_1)$, conditionally on $\kF_{3m}$, and on the event $\{ \xi'_{3m}(w)=1\}$, we have 
\begin{eqnarray*}
G_m &\subset&  \{\kN_w \cap [3m,3m+1] = \varnothing, \kN_{(w,v_1)} \cap[3m,3m+1] \neq \varnothing, \kN_{v_1} \cap[3m,3m+2] = \varnothing, \\
&&  \kN_{(v_1,v_0)} \cap[3m+1,3m+2] \neq \varnothing, \kN_{v_0} \cap[3m+1,3m+3] = \varnothing\},
\end{eqnarray*}
at least if $w\neq v_1$. Moreover, the event on the right hand side has probability equal to $(1- e^{- \lambda})^2 e^{-5}$, which is 
larger than $c\lambda^2$, for some $c>0$, and a similar result holds if $w=v_1$. Therefore for any $m$ and any nonempty subset $A \subset S(v_1)$,  
\begin{equation*}
\pn(G_m^c \mid \kF_{3m} )\, {\bf 1}(\xi'_{3m}=A) \leq (1-c\lambda^2){\bf 1}(\xi'_{3m}=A).
\end{equation*}
In other words, if we define  
$$H_m=\{\xi'_{3m} \cap S(v_1) \neq \varnothing\},$$
we get 
\begin{equation*} \label{es1}
\pn(G_m^c \mid \kF_{3m})\, {\bf 1}(H_m) \leq 1-c\lambda^2, 
\end{equation*}
for all $m\le n$. By using induction, it follows that 
\begin{align*}
\pn \left( \left(\bigcup_{m=0}^{n-1} G_m \right)^c \cap \left(\bigcap_{m=0}^{n-1} H_m\right)\right) \leq (1-c\lambda^2)^n.
\end{align*}
But by construction
$$E_{0,1}\ \subset \  \bigcap_{m=0}^{n-1} H_m.$$
Therefore 
$$\pn \left(E_{0,1}\cap\{\exists m\in [0,3n-1]\, :\,  \xi'_t(v_0)=1 \textrm{ for all }t\in [m,m+1] \}^c \right) \le \exp(- c\lambda^2 n).$$
Then by repeating the argument in each interval $[3 Mn,3(M +1)n]$, for every $M\le n/3-1$, we get  
\begin{equation}
\label{kM}
\pn \left(E_{0,1} , \, |\kM| < n/3\right) \le \exp(-c\lambda^2 n ),
\end{equation}
where 
$$\kM:= \left\{m \le n^2-1\,  :\,  \xi'_t(v_0)=1 \textrm{ for all }t\in [m,m+1] \right\}  .$$
Now for each $2\le j\le k$ and $m\le n^2-1$, define, 
\begin{eqnarray*}
&&C_{m,j}:= \left\{\kN_{(v_0,v_j)}\cap [m,m+1]\neq \varnothing,\, \kN_{v_j}\cap [m,m+2]=\varnothing\right\}\\
& \cap &\left\{|\{w\in S(v_j): \kN_{(v_j,w)}\cap[m+1,m+2]\neq \varnothing\textrm{ and }\kN_w\cap[m+1,m+2] =\varnothing\}|> \frac{\lambda d_j}{16e}\right\}.
\end{eqnarray*}
Note that these events are independent of $\kM$ and $E_{0,1}$, as they depend on different Poisson processes. 
Note also that by using \eqref{ld} 
\begin{eqnarray}
\label{Cmj}
\nonumber \pn(C_{m,j}) & =& (1-e^{-\lambda})e^{-2} \times \pn(\kB(d_j, (1-e^{-\lambda})/e)\ge \lambda d_j/(16e))\\
 &\ge & c\lambda,
\end{eqnarray}
and thus (since $C_{m,j}$ and $C_{m',j}$ are independent when $m-m'\ge 2$),  
$$\pn\left(\bigcap_{m\in \kM} C_{m,j}^c\ \Big| \ |\kM|\ge n/3\right) \le \exp(-c\lambda n).$$ 
Moreover, by construction if $m\in \kM$ and $C_{m,j}$ holds, then $v_j$ is lit at some time $t\in [m+1,m+2]$. 
Therefore by using \eqref{kM},  
\begin{eqnarray}
\label{finalprop}
\pn\left(E_{0,1}\cap\{ \exists j\in\{2,\dots,k\}:  v_j \textrm{ is never lit in }[0,n^2]\}\right) \le \exp(-c\lambda n).
\end{eqnarray}
Finally define $U_j = \exp(c\lambda^2 d_j)$, for all $j\le k$, with the constant $c$ as in Lemma \ref{lst}, 
and take $C$ large enough, so that the hypothesis $d_j\lambda^2\ge C\log n$ implies $U_j\ge 2n^2$. 
Then \eqref{finalprop} together with Lemma \ref{lst} (i) imply that 
$$\pn\left(E_{0,1} \cap (\cap_{j\ge 2} E_{1,j}^c)\right)\le \exp(-c\lambda^2n) + \prod_{j\ge 2} U_j^{-1} \le 2\exp(-c\lambda^2n/2),$$
where for the last inequality we used that $d_2+\dots +d_k \ge n/2$. This concludes the proof of \eqref{ps1}. 
The proposition immediately follows, since by using Lemma \ref{lst}, we also know that $\pn(E_{0,1})=1-o(1)$, when $v_1$ is infected initially (observe that $\exp(c\lambda^2d_1) \ge n^2$, if the constant $C$ in the hypothesis is large enough). 
\hfill $\square$

\section{Proof of Theorem \ref{td}}
\label{sectionproof}
The proof is the same in all the cases we considered, so to fix ideas we assume in all this section 
that the degree distribution is given by \eqref{pnaj} with $a\in(1,2)$. The other cases are left to the reader.

Let $(t_n)$ be as in the statement of Theorem \ref{td}. Define for $v\in V_n$, 
$$ X_{n,v}={\bf 1}(\{\xi^{v}_{t_n} \neq \varnothing\}).$$ 
The self-duality of the contact process (see (1.7) p. 35 in \cite{L}) implies that for any $\gamma >0$,
\begin{equation*} 
\mathbb{P}\left( |\xi^{V_n}_{t_n}| > \gamma  n \right) =
\mathbb{P} \left( \sum_{v\in V_n} X_{n,v} > \gamma n \right)
\end{equation*}
and similarly  for the reverse inequality. 
Hence, to prove that $|\xi^{V_n}_{t_n}|/n $ converges in probability to $ \rho_a(\lambda)$, 
we have to show that 
\begin{equation} \label{ek1}
\mathbb{P}\left( \sum_{v\in V_n} X_{n,v} > (\rho_{n,a}(\lambda) + \varepsilon) n \right) \to 0\quad \textrm{as }n\to \infty
\end{equation}
and 
\begin{equation} \label{ek2}
\mathbb{P}\left( \sum_{v\in V_n} X_{n,v} < (\rho_{n,a}(\lambda) - \varepsilon) n \right) \to 0\quad \textrm{as }n\to \infty
\end{equation}
for all $\varepsilon>0$ (recall that $\rho_{n,a}(\lambda)$  converges to $ \rho_a(\lambda),$ as $n \rightarrow \infty$). 
We will prove these two statements in the next two subsections. 

\subsection{Proof of \eqref{ek1}}
\label{subsectionYnv}
This part is quite elementary. The idea is to say that if the virus survives for a time $t_n$ 
starting from some vertex $v$, then $v$ has to infect one of its neighbors before $\sigma(v)$ 
(recall the definition \eqref{sm}), unless 
$\sigma(v) \ge t_n$, but this last event has $o(1)$ probability so we can ignore it.   
Now the probability that $v$ infects a neighbor before $\sigma(v)$, is bounded by the probability that one of the Poisson point 
processes associated to the edges emanating from $v$ has a point before $\sigma(v)$ (actually it is exactly equal to this 
if there is no loop attached to $v$). 
Then having observed that the latter event has probability exactly equal to $\rho_{n,a}(\lambda)$, 
we get the desired upper bound, at least in expectation. The true upper bound will follow using Chebyshev's inequality and the domination of the $X_{n,v}$'s by suitable i.i.d. random variables.    

\vspace{0.2cm}
\noindent Now let us write this proof more formally.  
Set $Y_{n,v} = {\bf 1}(C_{n,v})$, with (recall \eqref{sme}) 
$$C_{n,v}= \left\{  \min_{e:v\to \cdot} \sigma(e) <\sigma(v)  \right\},$$
where the notation $e:v\to \cdot$ means that $e$ is an (oriented) 
edge emanating from $v$ (possibly forming a loop).  
By construction the  $(Y_{n,v})_{v\in V_n}$ are i.i.d. random variables, and 
moreover, the above discussion shows that for all $v$, 
\begin{equation}
\label{XYnv} 
 X_{n,v} \leq Y_{n,v} + {\bf 1}(\{ \sigma(v) >t_n \}). 
\end{equation}
Now we have
\begin{align} \label{Cnv}
\en(Y_{n,v})=\mathbb{P} (C_{n,v})&= \sum_{j=1}^n \mathbb{P} (C_{n,v}\mid  D_v=j)\mathbb{P} (D_v=j)  \notag \\
& =  \sum_{j=1}^n \frac{j \lambda}{j \lambda +1} p_{n,a}(j)  = \rho_{n,a}(\lambda).
\end{align}
Therefore it follows from Chebyshev's inequality that 
\begin{equation*}
\label{Ynv}
\mathbb{P} \left( \sum_v Y_{n,v} > (\rho_{n,a}(\lambda) + \varepsilon/2) n \right)  =o(1), 
\end{equation*} 
for any fixed $\varepsilon>0$. 
On the other hand $\pn(\sigma(v) >t_n)=e^{-t_n} =o(1)$. Thus by using Markov's inequality we get
$$\mathbb{P} \left( \sum_v {\bf 1}(\{\sigma(v)>t_n \}) > \varepsilon  n/2 \right) =o(1).$$
Then \eqref{ek1}  follows with \eqref{XYnv}.

\subsection{Proof of \eqref{ek2}}
This part is more complicated and requires the results obtained so far in Sections 2, 3 and 4. 
First define $Z_{n,v}= {\bf 1}(A_{n,v}\cap B_{n,v})$, for $v\in V_n$, where
$$A_{n,v}= \{ v \textrm{ infects one of its neighbors before } \sigma(v)\},$$
and $B_{n,v}= \{\xi_{t_n}^v\neq \varnothing\}$. 
Remember that $X_{n,v} = {\bf 1}(B_{n,v})$, which in particular gives $Z_{n,v} \leq X_{n,v}$. 
Therefore the desired lower bound follows from the next lemma and Chebyshev's inequality.  
\begin{lem} \label{d4}
For any $v\neq w \in V_n$, 
\begin{itemize} 
\item[(i)] $\en (Z_{n,v} )\geq \rho_{n,a}(\lambda) - o(1)$. \\
\item[(ii)] $\cov (Z_{n,v}, Z_{n,w}) =o(1)$. 
\end{itemize}
\end{lem}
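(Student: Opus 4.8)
The plan is to prove the two parts of Lemma \ref{d4} separately, both of which ultimately reduce to controlling the event $A_{n,v}$ that $v$ infects a neighbor before $\sigma(v)$, together with the claim that once this happens the virus survives up to time $t_n$ with probability $1-o(1)$. For part (i), write $Z_{n,v} = \mathbf{1}(A_{n,v}\cap B_{n,v}) = \mathbf{1}(A_{n,v}) - \mathbf{1}(A_{n,v}\cap B_{n,v}^c)$, so that $\en(Z_{n,v}) = \pn(A_{n,v}) - \pn(A_{n,v}, B_{n,v}^c)$. The first term is exactly $\rho_{n,a}(\lambda)$ up to the contribution of loops: indeed, $\pn(A_{n,v})$ equals the probability that some Poisson process on an edge emanating from $v$ fires before $\sigma(v)$, and since $\pn(s_v\ge 1)=o(1)$ by Lemma \ref{lta1}(iv), this probability differs from $\rho_{n,a}(\lambda)$ by at most $o(1)$. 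The crux is then to show $\pn(A_{n,v}, B_{n,v}^c) = o(1)$, i.e. conditionally on $v$ infecting a neighbor $w$ before dying, the process survives up to $t_n$ with high probability. Here is where Sections 3 and 4 enter: by Lemma \ref{lta1}(v), w.h.p. all neighbors of $v$ have degree at least $n^\chi$ (for a fixed $\chi<1$), and by Lemma \ref{lta1}(iii) (or Lemma \ref{2a<2}(iii), Lemma \ref{ltb1}(iii)) any such high-degree neighbor is connected to a vertex of very large degree, which by Proposition \ref{stara<2} (or its analogues) sits inside the large two-step star graph $S(k;d_1,\dots,d_k)$. So once $w$ is infected, with high probability $w$ lights its own star and, within a bounded number of "relighting" cycles as in the proof of Proposition \ref{psta}, infects the center $v_1$ of the two-step star graph; then Proposition \ref{psta} guarantees survival for a time $\exp(c\lambda^2 n) \ge t_n$.

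More precisely, for part (i) I would fix a high-probability event $\Omega_n$ on which: the degree estimates of Lemma \ref{lb} and Lemma \ref{lta1} hold, $G_n$ contains the large two-step star graph of Proposition \ref{stara<2}, and the vertex $v$ has no loop and only high-degree neighbors. On $\Omega_n$, decompose according to whether $v$ infects a neighbor $w$: if it does, then $D_w \ge n^\chi$, so $w$ is connected (by Lemma \ref{lta1}(iii)) to some vertex $u$ of degree $\ge n/2$, hence to the center $v_0$ of the two-step star graph, and in fact $u$ can be taken to be one of the star centers $v_i$. Using Lemma \ref{lst}(iii) (the center of a star of diverging size gets lit within one unit of time with probability bounded below) and a constant number of attempts via fresh Poisson clocks on the short path from $w$ to $v_1$, with probability $1-o(1)$ the vertex $v_1$ is infected before time $t_n^{1/2}$, say; then Proposition \ref{psta} finishes. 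Summing the failure probabilities — the probability of leaving $\Omega_n$, plus the $o(1)$ chance of not propagating from $w$ to $v_1$ — gives $\pn(A_{n,v}, B_{n,v}^c) = o(1)$, and hence $\en(Z_{n,v}) \ge \rho_{n,a}(\lambda) - o(1)$.

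For part (ii), the covariance bound, the natural approach is to show both $\en(Z_{n,v}Z_{n,w})$ and $\en(Z_{n,v})\en(Z_{n,w})$ are equal to $\rho_{n,a}(\lambda)^2 + o(1)$. The upper bound $Z_{n,v} \le \mathbf{1}(A_{n,v})$ and the near-independence of $A_{n,v}$ and $A_{n,w}$ (they depend essentially on the Poisson clocks of edges at $v$ and at $w$ respectively, which are distinct once $v\not\sim w$, an event of probability $1-o(1)$, and even when $v\sim w$ the overlap is a single edge contributing $o(1)$) give $\en(Z_{n,v}Z_{n,w}) \le \pn(A_{n,v}\cap A_{n,w}) = \rho_{n,a}(\lambda)^2 + o(1)$. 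For the matching lower bound, I would intersect with the survival events: on the high-probability event $\Omega_n$ above, if both $v$ and $w$ infect a neighbor before dying, then — running the propagation argument simultaneously for the two "streams" using disjoint sets of Poisson clocks away from $v,w$ — with probability $1-o(1)$ both streams reach the two-step star graph and survive to time $t_n$. Since $\en(Z_{n,v})\en(Z_{n,w}) = \rho_{n,a}(\lambda)^2 + o(1)$ by part (i), subtracting gives $\cov(Z_{n,v},Z_{n,w}) = o(1)$.

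The main obstacle is the propagation step in part (i): showing that conditionally on infecting \emph{some} neighbor $w$ — which may have only moderately large degree $n^\chi$ rather than the huge degree of a star center — the infection reliably climbs up to a vertex $v_1$ covered by Proposition \ref{psta}. This requires combining the "one neighbor of $v$ is enough" observation with the connectivity lemmas (\ref{lta1}(iii), (v) and their analogues) and a short-path relighting argument modeled on the proof of Proposition \ref{psta}, taking care that the clocks used for this propagation are independent of the event $A_{n,v}$ itself so that the conditional probabilities factor cleanly; for part (ii) the same care is needed to keep the two streams' propagation clocks disjoint. The degree and structural estimates are all in place, so the argument is a matter of assembling them correctly rather than proving anything genuinely new.
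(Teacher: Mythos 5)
Your part (i) follows the paper's route essentially verbatim: establish $\pn(B_{n,v}\mid A_{n,v})=1-o(1)$ via the chain ``infected neighbor $w$ of degree $\ge n^{\chi}$ lights its own star (Lemma \ref{lst}), $w$ is connected to $v_1$ (Lemmas \ref{lta1}(iii),(v)), so $v_1$ gets infected and Proposition \ref{psta} takes over,'' then combine with $\pn(A_{n,v})\ge\rho_{n,a}(\lambda)-o(1)$ from the loop estimate Lemma \ref{lta1}(iv). For part (ii), however, you are working noticeably harder than necessary, and the key simplifying fact you are missing is that the paper sandwiches $\{s_v=0\}\cap C_{n,v}\subset A_{n,v}\subset C_{n,v}$ and uses that the indicators $Y_{n,v}={\bf 1}(C_{n,v})$ are \emph{exactly} i.i.d., not merely ``near-independent.'' This is precisely why the paper kept the loop Poisson clocks in the graphical construction and noted that each (unoriented) edge carries two oriented clocks: $C_{n,v}$ and $C_{n,w}$ then involve disjoint collections of Poisson processes \emph{even when} $v\sim w$, so your worry about the case $v\sim w$ and ``a single edge contributing $o(1)$'' is spurious. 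With this in hand, the covariance bound is immediate: since $0\le Y_{n,v}-Z_{n,v}$ with $\en(Y_{n,v}-Z_{n,v})=o(1)$ by part (i), one has $\bigl|\en(Z_{n,v}Z_{n,w})-\en(Y_{n,v}Y_{n,w})\bigr|\le\en(Y_{n,v}-Z_{n,v})+\en(Y_{n,w}-Z_{n,w})=o(1)$ and $\en(Y_{n,v}Y_{n,w})=\rho_{n,a}(\lambda)^2$, giving part (ii) without any joint analysis of the two processes. In particular your proposed ``two-stream simultaneous propagation with disjoint clocks'' argument for the lower bound on $\en(Z_{n,v}Z_{n,w})$ is both unnecessary and awkward to make rigorous (the two processes live on the same graphical representation and their survival events need not be decorrelated); a plain union bound $\pn(A_{n,v}\cap A_{n,w}\cap(B_{n,v}^c\cup B_{n,w}^c))\le\pn(A_{n,v}\cap B_{n,v}^c)+\pn(A_{n,w}\cap B_{n,w}^c)=o(1)$ already suffices and is what the domination by the $Y$'s implicitly delivers.
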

\begin{proof}
We claim that  
\begin{equation}
\label{ABnv}
\pn(B_{n,v} \mid A_{n,v}) = 1-o(1).
\end{equation}
To see this first use that w.h.p. there is a large two-step star graph in $G_n$ (given by Proposition \ref{stara<2}). 
Then use Lemma \ref{lta1} (iii) and (v) to see that w.h.p. all neighbors of $v$ have large degree and are connected to all the $v_i$'s  of the two-step star graph (recall that by construction $D_{v_i}\ge n/2$, for all $i$). 
Note that in the case $a=2$, this is not exactly true, but nevertheless the neighbors of 
$v$ and the $v_i$'s are still w.h.p. at distance at most two, 
since they are all connected to the set of vertices $z$ satisfying $D_z\ge n/\log n$ (and w.h.p. this set is nonempty).  
Now if a neighbor, say $w$, of $v$ is infected and has large degree, then Lemma \ref{lst} shows that 
w.h.p. the virus will survive in the star graph formed by $w$ and its neighbors for a long time. But if in addition $w$ and $v_1$ 
are connected (or more generally if they are at distance at most two), then $v_1$ will be infected as well w.h.p. before extinction of the process. Then Proposition \ref{psta} gives \eqref{ABnv}.

On the other hand observe that 
$$\{s_v=0\} \cap C_{n,v}\, \subset \, A_{n,v}.$$ 
Therefore \eqref{Cnv} and Lemma \ref{lta1} (iv) give Part (i) of the lemma. The second part follows easily 
by using that we also have $A_{n,v} \subset C_{n,v}$, and that the $C_{n,v}$'s are independent.   
\end{proof}

\section{Proof of Theorem \ref{propexp}}
We first prove a lower bound on the probability that the  extinction time is smaller than $n^2$.
Together with the following lemma, we will get the assertion (ii) of the theorem: 
\begin{lem}
\label{mintaun}
For every $s>0$, we have 
$$\pn(\tau_n\le s)\le \frac{s}{\en(\tau_n)}.$$
\end{lem}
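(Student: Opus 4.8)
The statement to prove is Lemma \ref{mintaun}: for every $s>0$, $\pn(\tau_n\le s)\le s/\en(\tau_n)$. The natural approach is to exploit the (approximate) memorylessness of the extinction time of the contact process started from full occupancy, which is exactly the reason one expects $\tau_n$ to be asymptotically exponential. The cleanest way to package this is a restart/subadditivity argument: if the process has not died by time $s$, then by the Markov property and attractiveness the remaining lifetime stochastically dominates an independent copy of $\tau_n$ started from full occupancy (since at time $s$ the configuration $\xi^{V_n}_s$ is dominated below by nothing useful — but one compares $\xi^{V_n}_{s}$ with $\xi^{V_n}_0=V_n$ via the standard coupling, giving $\tau^{V_n}_s \succeq$ an independent copy only after re-seeding). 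The precise and standard fact is: $\pn(\tau_n> (k+1)s)\le \pn(\tau_n>ks)\,\pn(\tau_n>s)$? No — that inequality goes the wrong way. Instead one uses $\pn(\tau_n>(k+1)s\mid \tau_n>ks)\ge \pn(\tau_n>s)$? That is also not literally true. The correct and robust route is the following.

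First I would recall the elementary general fact that for any nonnegative random variable $T$ and any $s>0$,
\begin{equation*}
\en(T)\ \ge\ \en(T\wedge s)\ \ge\ s\,\pn(T\ge s),
\end{equation*}
simply because $T\wedge s\le s$ always and $T\wedge s = s$ on $\{T\ge s\}$. Wait — this gives $\en(\tau_n)\ge s\,\pn(\tau_n\ge s)$, i.e.\ $\pn(\tau_n\ge s)\le \en(\tau_n)/s$, which is the reverse of what is wanted. So the trivial Markov bound is not enough; the content of the lemma is genuinely a lower bound on $\en(\tau_n)$ in terms of the small-time extinction probability, i.e.\ $\en(\tau_n)\ge s/\pn(\tau_n\le s)$, equivalently $\pn(\tau_n\le s)\cdot\en(\tau_n)\ge\en(\tau_n)$... no. Let me restate: $\pn(\tau_n\le s)\le s/\en(\tau_n)$ means $\en(\tau_n)\le s/\pn(\tau_n\le s)$, an \emph{upper} bound on the mean. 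That is the surprising direction, and it must use the structure of the contact process.

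The mechanism is a renewal/restart inequality: $\pn(\tau_n> ks)\ge (1-\pn(\tau_n\le s))^k = \pn(\tau_n>s)^k$. This \emph{does} hold, and here is why: run the process from $V_n$; on the event $\{\tau_n>s\}$ the configuration $\xi^{V_n}_s$ is nonempty, and by attractiveness the future evolution from $\xi^{V_n}_s$, using fresh Poisson clocks after time $s$, stochastically dominates the evolution from a single fixed vertex — but more to the point, we can couple so that survival past time $(j+1)s$ given survival past $js$ has probability at least $\pn(\tau^{\{v_\star\}}_n>s)$ for a worst-case vertex; cleaner still, use self-duality or simply the fact that from \emph{any} nonempty configuration, $\pn(\text{survive another }s)\ge \min_v \pn(\xi^v_s\ne\varnothing)=:q_n$, hence $\pn(\tau_n>ks)\ge q_n^{?}$... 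This is getting delicate. The robust statement actually used in the literature (e.g.\ \cite{MMVY}) and which I would invoke is: \emph{by attractiveness, $t\mapsto \pn(\tau_n>t)$ is submultiplicative is false, but $\pn(\tau_n>t+s)\ge \pn(\tau_n>t)\pn(\tau_n>s)$ holds}, i.e.\ the survival probability is \emph{super}multiplicative. This follows because, letting $\xi^{V_n}$ be built from the graphical representation, $\{\tau_n>t+s\}\supseteq\{\tau_n>t\}\cap\{\text{the process restarted from }\xi^{V_n}_t\text{ survives another }s\}$, and conditionally on $\xi^{V_n}_t\ne\varnothing$, by attractiveness and the Markov property the latter has probability $\ge \pn(\xi^{V_n}_s\ne\varnothing)=\pn(\tau_n>s)$ since $\xi^{V_n}_t\subseteq V_n$ gives $\xi$ started from $\xi^{V_n}_t$ dominated \emph{above}, not below — so this needs $V_n$ to be the \emph{maximal} configuration, which it is, and we compare the other way: from $\xi_t^{V_n}$ the process is dominated \emph{below} by nothing. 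Hmm — the right statement: starting from any configuration $A\ne\varnothing$, couple with the process started from $V_n$; then $\xi^A_\cdot\le\xi^{V_n}_\cdot$, so $\tau^A_n\le\tau^{V_n}_n$, which again is the wrong direction.

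Given these sign subtleties, here is the plan I would actually execute, which sidesteps them. Fix $s>0$ and set $q=\pn(\tau_n>s)$. I claim $\pn(\tau_n>ks)\ge q^k$ for all integers $k\ge1$, proved by induction using the graphical construction: on $\{\tau_n>ks\}$, the set $R:=\{v:(v,ks)\leftrightarrow(v,ks)\text{ in the trivial sense, i.e. }\xi^{V_n}_{ks}(v)=1\}$ is nonempty; now the event that \emph{some} vertex alive at time $ks$ keeps the infection alive up to $(k+1)s$ contains the event $\mathcal{E}$ that the graphical construction restricted to times $[ks,(k+1)s]$, when started from \emph{full} occupancy at time $ks$, survives — and $\{\xi^{V_n}_{ks}\ne\varnothing\}\cap\mathcal{E}^{[ks,(k+1)s]}_{\text{from }\xi^{V_n}_{ks}}$... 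I will instead use the standard and correct fact (see \cite[proof of Thm 1.2]{MMVY} or \cite[Sec. I.3]{L}) that $\pn(\tau_n>(k+1)s)\ge\pn(\tau_n>ks)\,\pn(\tau_n>s)$ via the following coupling: let $\xi$ run from $V_n$; independently, let $\tilde\xi$ be a contact process on $G_n$ from $V_n$ driven by the time-shifted clocks on $[ks,\infty)$; on $\{\tau_n>ks\}$ we have $\xi^{V_n}_{ks}\ne\varnothing$, pick any $w\in\xi^{V_n}_{ks}$, and note $\xi$ restarted dominates the single-site process from $w$ which in turn, after it has infected a lit star (w.h.p.), dominates — rather than chase this, I invoke monotonicity once: $\pn(\text{survival on }[ks,(k+1)s]\mid\mathcal F_{ks})\ge\pn(\tau_n>s)$ on $\{\tau_n>ks\}$, which is Proposition \ref{pcel}-type input, or simply attractiveness since from a nonempty set the survival-for-time-$s$ probability is minimized... no, maximized... at the empty set it's $0$. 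The genuinely correct inequality is: from any nonempty $A$, $\pn_A(\tau_n>s)\ge\pn_{\{v\}}(\tau_n>s)$ for the $v\in A$ minimizing the RHS, and one then needs a uniform-in-$v$ lower bound — which is more than the lemma asks. I will therefore present the argument at the level used in the paper: \textbf{Step 1}, establish supermultiplicativity $g(t+s)\ge g(t)g(s)$ where $g(t)=\pn(\tau_n>t)$, via the graphical construction and attractiveness (the restarted process from the nonempty $\xi_t^{V_n}$, using independent fresh randomness, survives an extra time $s$ with conditional probability at least $g(s)$, because $\xi^{V_n}_t$ being nonempty lets us dominate below by a single-site process and then apply the already-proven survival statements Proposition \ref{psta}/Lemma \ref{lst}; but to keep it clean I will simply cite the monotonicity argument of \cite{MMVY}). \textbf{Step 2}, from $g(ks)\ge g(s)^k$ and $g(s)=1-\pn(\tau_n\le s)\ge e^{-2\pn(\tau_n\le s)}$ (valid once $\pn(\tau_n\le s)\le 1/2$; the case $\pn(\tau_n\le s)>1/2$ forces, by Step 1 applied to show $g$ cannot drop too fast... actually handle it separately: if $\pn(\tau_n\le s)\ge1/2$ the claimed bound $\pn(\tau_n\le s)\le s/\en(\tau_n)$ holds provided $\en(\tau_n)\le 2s$, which we would get from $g(ks)\le 2^{-k}$), compute
\begin{equation*}
\en(\tau_n)=\int_0^\infty g(t)\,dt\le s\sum_{k=0}^\infty g(ks)\le s\sum_{k=0}^\infty g(s)^k=\frac{s}{1-g(s)}=\frac{s}{\pn(\tau_n\le s)},
\end{equation*}
which is exactly the claim. \textbf{Main obstacle.} The only real issue is Step 1: making precise that survival past $ks$ plus independent ``fresh'' survival for time $s$ implies survival past $(k+1)s$ — i.e.\ the supermultiplicativity $g(t+s)\ge g(t)g(s)$. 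This requires the restart argument together with attractiveness (to replace the nonempty configuration $\xi^{V_n}_t$ by something whose continuation probability is controlled), and is the point where I would lean on the results of Section 4 (Proposition \ref{psta}) or simply reference the analogous lemma in \cite{MMVY} and \cite[Section I.3]{L}; once Step 1 is in hand, Step 2 is the routine layer-cake computation above.
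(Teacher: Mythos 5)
Your Step~2 computation is the right one, but it uses the \emph{opposite} inequality to the one you announce in Step~1. You state supermultiplicativity $g(ks)\ge g(s)^k$ for $g(t)=\pn(\tau_n>t)$, yet the chain
\[
\en(\tau_n)=\int_0^\infty g(t)\,dt\le s\sum_{k\ge0}g(ks)\le s\sum_{k\ge0}g(s)^k=\frac{s}{1-g(s)}
\]
needs $g(ks)\le g(s)^k$, i.e.\ \emph{sub}multiplicativity. That is also the inequality that is actually true and easy here, and you in fact derived it and then discarded it: at time $t$, on $\{\tau_n>t\}$, the configuration $\xi^{V_n}_t$ is a nonempty subset of $V_n$, so by the graphical/monotone coupling the process restarted from $\xi^{V_n}_t$ is dominated by a fresh process started from $V_n$; hence $\pn(\tau_n>t+s\mid\mathcal F_t)\le g(s)$ on $\{\tau_n>t\}$, giving $g(t+s)\le g(t)g(s)$. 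When you observed that $\xi^A_\cdot\le\xi^{V_n}_\cdot$ and concluded this was ``the wrong direction,'' you had the right direction; the confusion propagated into the stated Step~1.

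Once the inequality is flipped, the lemma is immediate and no input from Section~4 (Proposition~\ref{psta}, Lemma~\ref{lst}) is needed: the whole content is that starting from the \emph{maximal} configuration the process can only slow down upon restarting, and the layer-cake bound then gives $\en(\tau_n)\le s/\pn(\tau_n\le s)$. Your side-case analysis ($g(s)\le 1/2$ versus $g(s)>1/2$) and the inequality $1-x\ge e^{-2x}$ are also superfluous. This corrected argument is exactly what the paper invokes by citing Lemma~4.5 of \cite{MMVY}, so modulo the sign error you arrived at the same proof the authors had in mind.
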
 
This lemma is a direct consequence of the Markov property and the attractiveness of the contact process, see for instance Lemma 4.5 in \cite{MMVY}. 

\vspace{0.2cm}
For simplicity we assume that $\lambda \le 1$, 
and leave to the reader the task to slightly modify the values of some constants in the case $\lambda>1$. 
We also assume first that the degree distribution is given by \eqref{pnaj}.

Let $\bar{n}_a$ be the number of vertices having degree larger than $n^{1/2a}$. Then $\bar{n}_a \sim \bin(n, \bar{p}_a),$ where $\bar{p}_a = \sum_{j > n^{1/2a}} p_{n,a}(j) \asymp n^{(1-a)/2a}$. Hence, as for Lemma \ref{lb}, there exists a constant $K>0$, such that 
\begin{equation*} \label{tt}
\pn\left(\bar{n}_a \le K n^{(1+a)/2a}\right) = 1-o(1). 
\end{equation*}
In fact thanks to Lemma \ref{lb}, we can even assume that 
\begin{equation}
\label{En}
\pn(\mathcal E_n) = 1-o(1),
\end{equation}
where
$$\mathcal{E}_n := \left\{n_j \le K n j^{-a} \textrm{ for all } j \le n^{1/2a}\right\} \cap \left\{\bar{n}_a \le K n^{(1+a)/2a}\right\}.$$  
Now if a vertex has degree $j$, the probability that it becomes healthy before spreading infection to another vertex is at least equal to $1/(1+ j \lambda)$ (it is in fact exactly equal to this if there is no loop attached to this vertex).  Since this happens independently for all vertices, we have that a.s. for $n$ large enough, on $\mathcal{E}_n$,
\begin{align*}
  \mathbb{P}(\tau_n \leq \min_v \sigma(v) \mid (D_v)_{v\in V_n}) & \geq (1/(1+  \lambda n))^{\bar{n}_{a}} \prod \limits_{j=1}^{ \, \,n^{1/2a}}(1/(1+  \lambda j))^{n_j} \\
& \geq  (2 \lambda n)^{-\bar{n}_{a}}  \prod \limits_{j=1}^{1/\lambda} 2^{-n_j}\prod \limits_{1/\lambda}^{\, \, n^{1/2a}} (2 \lambda j)^{-n_j} \\
& \geq (2\lambda)^{-n} n^{-\bar{n}_{a}} \prod \limits_{1/\lambda}^{\,\,n^{1/2a}} j^{-n_j}\\
& \geq \exp \left(-n \left( \log( 2 \lambda) + K \sum\limits_{1/\lambda}^{n^{1/2a}} j^{-a} \log j \right) - \bar{n}_{a} \log n \right) \\
& \geq \exp(-Cn/4),
\end{align*}
for some constant $C=C(\lambda)>0$.

Now for each vertex $v$, $\sigma(v)$ is an exponential random variable with mean $1$. Hence, a.s. for $n$ large enough and on $\mathcal{E}_n$, 
$$
 \pn(\tau_n \leq n^2 \mid (D_v)_{v\in V_n}) \geq e^{-Cn/4} - \pn(\exists v : \sigma(v) \geq n^2) \geq  e^{-Cn/2}.
$$
The same can be proved in the case when the degree distribution is given by \eqref{paj}. One just has to use that w.h.p. 
all the degrees are bounded by $n^{2/(a-1)}$, but this does not seriously affect the proof.

Together with \eqref{En}, it follows that 
$$\pn(\tau_n \leq n^2)\ge  \exp(-Cn)(1-o(1)),$$ 
and as we already mentioned above, with Lemma \ref{mintaun} we get the assertion (ii) of the theorem.

\vspace{0.2cm}
\noindent We now prove (i). This will be a consequence of a more general result: 
\begin{prop} \label{pcel}
Let $(G_n^0)$ be a sequence of connected graphs, such that $|G_n^0|\le n$, for all $n$. 
Let $\tau_n$ denote the extinction time of the contact process on $G_n^0$ 
starting from full occupancy. Assume that
\begin{align} \label{nas}
\frac{D_{n,\max}}{d_n \vee \log n} \rightarrow \infty,
\end{align}
with $D_{n,\max}$ the maximum degree and $d_n$ the diameter of $G_n^0$.  Then 
\begin{align*} 
\frac{\tau_n}{\en (\tau_n)}\quad  \mathop{\longrightarrow}^{(\kL)}_{n\to \infty} \quad  \kE(1),
\end{align*}
where $\kE(1)$ is an exponential random variable with mean one.
\end{prop}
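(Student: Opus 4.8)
The plan is to establish the standard "loss of memory" criterion for exponential limiting laws of extinction times, in the spirit of Mountford (see also \cite{MMVY}): it suffices to show that from \emph{any} initial configuration the process couples, with probability tending to one, with the process started from full occupancy, on a time scale which is negligible compared to $\en(\tau_n)$. More precisely, I would prove that there exist a sequence $s_n$ with $s_n/\en(\tau_n)\to 0$ (one may take $s_n$ of polynomial size, say $s_n=n^3$, or even $s_n$ growing slower, as long as it dominates $d_n\vee\log n$ suitably) and a sequence $\varepsilon_n\to 0$, such that for every nonempty $A\subset V_n$,
\begin{equation*}
\pn\big(\tau_n^A > s_n \textrm{ and } \xi^A_{s_n}=\xi^{V_n}_{s_n}\big)\ \ge\ \pn(\tau_n^A>s_n)(1-\varepsilon_n).
\end{equation*}
Granting this, the proof of (i) is the by-now classical argument: using the Markov property at times $s_n, 2s_n,\dots$ together with Lemma \ref{mintaun} to control $\pn(\tau_n\le s_n)$, one shows that $\tau_n/\en(\tau_n)$ has, in the limit, the lack-of-memory property characterizing the exponential distribution, and that its mean is $1$ by construction; I would cite \cite{MMVY} for the precise implication once the coupling estimate is in hand.

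The heart of the matter is therefore the coupling estimate, and this is where hypothesis \eqref{nas} enters. Fix a vertex $u$ of maximal degree $D_{n,\max}$; since $G_n^0$ is connected with diameter $d_n$, every vertex is within distance $d_n$ of $u$. Starting from full occupancy, Lemma \ref{lst} (iii) shows that the star centered at $u$ is lit at time $1$ with probability bounded below, and then Lemma \ref{lst} (i) and (iv) show that it stays lit for an exponentially long time (in $D_{n,\max}$), which in particular dwarfs $s_n$. Starting from an arbitrary nonempty $A$, on the event $\{\tau_n^A>s_n\}$ the infection is present somewhere at every intermediate time; using the graphical construction one forces, in an interval of length $O(d_n)$ and with probability at least $c^{d_n}$, a single infection path from $A$ to $u$, after which $u$'s star becomes lit. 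Because $d_n=o(D_{n,\max})$ and $\log n=o(D_{n,\max})$, the cost $c^{d_n}$ is negligible on the relevant scale and the "lit star at $u$" regime is reached, for both processes, well before time $s_n$. Once both $\xi^A$ and $\xi^{V_n}$ have $u$'s star lit, one runs them under the same graphical representation: the contact process restricted to a lit star has a unique quasi-stationary behavior, and a short additional time (again polynomial, dominated by $d_n\vee\log n$, hence by $s_n$) suffices for the two to coalesce on the whole graph with probability $1-\varepsilon_n$ — propagating the common infection from $u$ outward to all of $V_n$ along shortest paths, and killing all discrepancies, using again \eqref{nas} to absorb the $c^{d_n}$ and $(\ldots)^{\log n}$ factors.

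The main obstacle, as usual in these arguments, is the coalescence step: showing that two copies of the contact process, both with the $u$-star lit but otherwise in arbitrary configurations, can be coupled to agree everywhere within a short time window, uniformly in the starting configurations, with error $\varepsilon_n\to 0$. This requires quantifying how quickly the lit star "reinfects" the rest of the graph and how quickly spurious infections die out; the diameter bound $d_n$ controls the former and a union bound over the at most $n$ vertices controls the latter, but one must check that the reinfection probability along a single path of length $\le d_n$ stays lit long enough — this is precisely what Lemma \ref{lst} (i) and (iv) guarantee on the time scale $\exp(c\lambda^2 D_{n,\max})$, and what \eqref{nas} makes compatible with an error term that vanishes. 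I would organize the write-up so that all these elementary path-forcing and union-bound estimates are collected into one lemma feeding the coupling inequality above, and then invoke the abstract exponential-limit result to conclude.
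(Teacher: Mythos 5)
Your high-level framework is the right one: you invoke Mountford's criterion (Proposition 1.2 in \cite{M}, combined with Lemma \ref{mintaun}) and correctly identify that the work reduces to a uniform coupling estimate on a time scale $a_n = o(\en(\tau_n))$, with hypothesis \eqref{nas} entering via the gap between the diameter and the maximal degree. You also correctly identify the device of forcing an infection path to a high-degree vertex $u$, paying $\exp(-Cd_n)$ per attempt, and using Lemma \ref{lst} to keep $u$'s star lit afterward. That part matches the paper.

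However, there are two genuine gaps. First, the proposed time scale $s_n = n^3$ (or ``polynomial'') is wrong in the generality of the proposition: each attempt to push an infection path to $u$ succeeds with probability only about $\exp(-Cd_n)$, so one needs on the order of $\exp(Cd_n)$ attempts, i.e., a window of length roughly $d_n\exp(Cd_n)$. When $d_n$ grows faster than $\log n$ this is super-polynomial. The paper takes $a_n \approx \exp\big(c(\log n\vee d_n)\varphi_n\big)$ with $\varphi_n\to\infty$ slowly, and \eqref{nas} is exactly what guarantees $a_n = o\big(\exp(cD_{n,\max})\big) \le o(\en(\tau_n))$. Second and more importantly, the coalescence step you flag as ``the main obstacle'' is not resolved by the sketch you give. ``Propagating the common infection from $u$ outward to all of $V_n$ along shortest paths, and killing all discrepancies'' is not a workable argument: the contact process does not occupy all of $V_n$ (the equilibrium density is strictly below $1$), and the discrepancy set $\xi_t\setminus\xi^v_t$ is itself a contact process on the same graph with the same parameters, so there is no a priori reason it dies out on the scale $a_n$. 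The paper sidesteps direct coalescence entirely via self-duality: on the bad event there is a vertex $w$ with $\xi_{a_n}(w)=1$ but $\xi^v_{a_n}(w)=0$, which by duality means the \emph{forward} process $\xi^v$ and the \emph{backward dual} process $\hat\xi^{w,a_n}$ both survive for time $\ge b_n$ and yet never meet. One then shows that survival to time $b_n$ forces $u$'s star to be lit (forward) and forces a large subset of $S(u)$ to connect to $(w,a_n)$ (backward), at essentially the same time $2b_n$; a short, purely local calculation inside $S(u)$ (the events $C_j$ over $J_n$ small subintervals) shows the two lit configurations must be joined by an infection path within one unit of time, with probability $1-o(1/n)$, and a union bound over $w$ finishes. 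This reduction of a global coalescence statement to a local intersection statement on a single star is the key missing idea, and without it the proof does not close.
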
 

\begin{proof} 
According to Proposition 1.2 in \cite{M} and Lemma \ref{mintaun} above it suffices to show that there exists a sequence $(a_n)$, such that $a_n=o(\en(\tau_n))$ and 
\begin{eqnarray}
\label{xivxi}
\sup_{v\in V_n}\, \pn(\xi^v_{a_n} \neq \xi_{a_n}, \xi^v_{a_n} \neq \varnothing) = o(1),
\end{eqnarray}
where $(\xi_t)_{t\ge 0}$ denotes the process starting from full occupancy.

Set $\bar{\lambda}= \lambda \wedge 1$. Using Lemma \ref{lst}, we get 
\begin{align}
\label{taunDnmax}
\en(\tau_n) \geq \exp(c \bar{\lambda}^2 D_{n,\max}), 
\end{align}
with $c$ as in this lemma. 
Using next \eqref{nas}, we can find a sequence $(\varphi_n)$ tending to infinity, such that
\begin{align}
\label{Dnmaxdn}
\frac{D_{n,\max}}{(\log n \vee d_n)\varphi_n} \rightarrow \infty.
\end{align}
Now define 
\begin{align*}
b_n= \exp(c \bar{\lambda}^2 (\log n \vee d_n) \varphi_n ) \quad \textrm{and}\quad a_n=4b_n+1.  
\end{align*}
Then \eqref{taunDnmax} and \eqref{Dnmaxdn} show that $a_n=o(\en(\tau_n))$, so it amounts now to prove \eqref{xivxi} for this choice of $(a_n)$. To this end it is convenient to introduce the dual contact process. 
Given some positive real $t$ and 
$A$ a subset of the vertex set $V_n$ of $G_n$,  the dual process $(\hat{\xi}^{A,t}_s)_{s\le t}$ is defined by 
\[\hat{\xi}^{A,t}_s = \{ v\in V_n : (v,t-s)\longleftrightarrow A \times \{ t \} \},\]
for all $s\le t$. It follows from the graphical construction that for any $v$, 
\begin{eqnarray} \label{cl2}
&&\nonumber \pn(\xi^v_{a_n} \neq \xi_{a_n}, \xi^v_{a_n} \neq \varnothing)\\
 &=& \pn (\exists w\in V_n : \xi^v_{a_n}(w) = 0,\, \xi^v_{a_n} \neq \varnothing,\, \hat{\xi}^{w,a_n}_{a_n} \neq \varnothing) \notag\\
&\le & \sum_{w\in V_n} \pn\left(\xi^v_{a_n} \neq \varnothing,\, \hat{\xi}^{w,a_n}_{a_n} \neq \varnothing, \textrm{ and } \hat{\xi}^{w,a_n}_{a_n-t} \cap  \xi^v_t  = \varnothing \textrm{ for all } t\le a_n\right), 
\end{eqnarray}
So let us prove now that the last sum above tends to $0$ when $n\to \infty$. Set 
$$\beta_n = [\varphi_n (d_n \vee \log n)],$$
and let $u$ be a vertex with degree larger than $\beta_n$. Let then $S(u)$ be a star graph of size $\beta_n$ centered at $u$.  
Now we slightly change the definition of a lit vertex, and say that $u$ is lit if the number of its infected neighbors \textit{in $S(u)$} is larger than $\bar{ \lambda} \beta_n/(16e)$. 
We first claim that 
\begin{align}
\pp(\xi^v_{b_n} \neq \varnothing, u \textrm{ is  not lit before } b_n )  = o(1/n). \label{vbn1}
\end{align}  
To see this, define $K_n=[b_n/d_n]$ and for any $0 \leq k \leq K_n-1$
\[A_k:=\{\xi^v_{kd_n}\neq \varnothing\},\] 
and 
\[B_k:= \left\{ \xi_{kd_n}^v\times\{kd_n\} \longleftrightarrow (u,(k+1)d_n-1) \right\} \cap \{u \textrm{ is lit at time } (k+1)d_n\}.\]
 Note that 
\begin{align}
\label{inc.bn}
\{\xi^v_{b_n} \neq \varnothing, u \textrm{ is  not lit before } b_n\} \ \subset \  \bigcap_{k=0}^{K_n-1} A_k \cap B_k^c.
\end{align} 
Moreover, by using a similar argument as for \eqref{Cmj}, we obtain   
\begin{eqnarray*}
\pn\left((z,t)\longleftrightarrow (z',t+d_n-1)\right) \ge \exp(-C d_n) \quad \textrm{for any $z, z'\in V_n$ and $t\ge 0$},
\end{eqnarray*}
for some constant $C>0$ (in fact this is not true if $d_n=1$; but in this case one can just consider time intervals of length  $d_n+1$ instead of $d_n$). On the other hand,  Lemma \ref{lst} (iii) implies that  if $u$ is infected at time $t$ then it is lit at time $t+1$ with probability larger than $1/3$, if $n$ is large enough.
Therefore for any $k\le K_n-1$, 
$$\pn(B_k^c\mid \kG_k){\bf 1}(A_k) \le 1-\exp(-Cd_n)/3,$$
with $\kG_k$ the sigma-field generated by all the Poisson processes introduced in the graphical construction  in the time interval $[0, kd_n]$. Iterating this, we get  
\begin{eqnarray*}
\pn\left(\bigcap_{k=0}^{K_n-1} A_k \cap B_k^c\right) &\le  & (1-\exp(-Cd_n)/3)^{K_n-1} = o(1/n),
\end{eqnarray*}
where the last equality follows from the definition of $b_n$. Together with \eqref{inc.bn} this proves our claim \eqref{vbn1}. 
Then by using Lemma \ref{lst} (iv) we get 
\begin{align} \label{vb}
\pp(\xi^v_{b_n} \neq \varnothing, u \textrm{ is not lit at time } 2b_n ) = o(1/n).
\end{align}
Therefore, if we define 
\begin{align*}
\kA(v)&=\{\xi^v_{b_n} \neq \varnothing, u \textrm{ is lit at time $2b_n$}  \}, 
\end{align*}
we get 
$$
\pp(\kA(v)^c, \xi^v_{b_n} \neq \varnothing)=o(1/n).
$$
Likewise if 
\begin{align*}
\hat{\kA}(w)&= \{\hat{\xi}^{w,4b_n+1}_{b_n}  \neq  \varnothing,  \exists \, U \subset S(u): |U| \geq \frac{\bar{\lambda}}{16e}\beta_n\textrm{ and } (x,2b_n+1) \leftrightarrow (w,4b_n+1) \, \forall \, x \in U \}.
\end{align*}
then  
$$ 
\pp(\hat{\kA}(w)^c, \hat{\xi}^{w, 4b_n+1}_{b_n} \neq \varnothing)=o(1/n). 
$$
Moreover, $\kA(v)$ and $\hat{\kA}(w)$ are independent for all $v$, $w$. 
Now the result will follow if we can show that for any $A,B \subset S(u)$ with $|A|, |B|$ larger than $\bar{\lambda}\beta_n/(16e)$
\begin{eqnarray} \label{aub}
\pp(A \times \{2b_n\} \mathop{\longleftrightarrow}^{S(u)} B \times \{2b_n+1\} ) = 1-o(1/n),
\end{eqnarray} 
where the notation 
$$A \times \{2b_n\} \mathop{\longleftrightarrow}^{S(u)} B \times \{2b_n+1\}$$ 
means that there is an infection path inside $S(u)$ from a vertex in $A$ at time $2b_n$ to a vertex in $B$ at time $2b_n+1$. 
To prove \eqref{aub}, define
\begin{align*}
\bar{A} &=\{x \in A \setminus \{u\}: \kN_{x} \cap [2b_n,2b_n+1] = \varnothing\},\\
\bar{B} &=\{y \in B \setminus \{u\}: \kN_{y} \cap [2b_n,2b_n+1] = \varnothing\}.
\end{align*}
Since for any $x$,  
$$\pp(\kN_{x}\cap [2b_n,2b_n+1] = \varnothing)= 1-e^{-1},$$
standard large deviations results show that $|\bar{A}|$ and $|\bar B|$ are larger than $(1-e^{-1}) \bar{\lambda} \beta_n/(32e)$, with probability at least $1-o(1/n)$. Now let 
$$\kE= \{|\bar{A}| \geq (1-e^{-1}) \bar{\lambda} \beta_n/(32e)\} \cap \{|\bar{B}| \geq (1-e^{-1}) \bar{\lambda} \beta_n/(32e)\}.
$$
Set  
\begin{align*}
\varepsilon_n = \frac{1}{(\log n) \sqrt{\varphi_n}} \quad \textrm{and} \quad J_n = \left[ \frac{(\log n) \sqrt{\varphi_n}}{2}\right],
\end{align*}
and define for $0 \leq j \leq J_n-1$
\begin{eqnarray*}
C_j& =& \{\kN_u \cap [2b_n+2j \varepsilon_n ,2b_n+(2j+2) \varepsilon_n  ] = \varnothing\} \\
&\cap& \{ \exists x \in \bar{A}: \kN_{(x,u)} \cap [2b_n+2j \varepsilon_n ,2b_n+(2j+1) \varepsilon_n  ] \neq \varnothing\} \\
 &\cap&  \{ \exists y \in \bar{B}: \kN_{(u,y)} \cap [2b_n+(2j+1) \varepsilon_n ,2b_n+(2j+2) \varepsilon_n  ] \neq \varnothing\}.
\end{eqnarray*}
Observe that 
\begin{eqnarray}
\label{unionCj}
\bigcup_{j=0}^{J_n-1} C_j \subset \Big\{A \times \{2b_n\} \mathop{\longleftrightarrow}^{S(u)} B \times \{2b_n+1\}\Big\}. 
\end{eqnarray} 
Moreover, conditionally on $\bar A$ and $\bar B$, the events $(C_j)$ are independent, and
\begin{align*}
\pp(C_j \mid \bar A, \bar B) & = e^{-2 \varepsilon_n} \pp(\bin(|\bar A|, 1- e^{-\varepsilon_n}) \geq 1)\times \pp(\bin(|\bar B|, 1- e^{-\varepsilon_n}) \geq 1)\\
&\ge  1/2,
\end{align*}
on the event $\kE$, if $n$ is large enough. Therefore
\begin{eqnarray*}
\pn\left(\kE, \,  \bigcap_{j=0}^{J_n-1} C_j^c\right) &\le  & 2^{-J_n} = o(1/n).
\end{eqnarray*}
This together with \eqref{unionCj} imply  \eqref{aub}, and concludes the proof of the proposition.
\end{proof}

\begin{rema} \label{remexp}
\emph{This proposition can be used in various examples, for instance    
to the case of the configuration model with degree distribution satisfying $p(1)=p(2)=0$, and 
$$p(k) \sim c k^{-a}\qquad \textrm{as }k\to\infty,$$
for some constants $c>0$ and $a>2$. This is the degree distribution considered in \cite{CD, MMVY}. 
In this case it is known that w.h.p. the graph is connected and has diameter $\kO(\log n)$, see \cite[Lemma 1.2]{CD}, 
and since the maximal degree is at least polynomial, the proposition applies well here.  
It also applies to the preferential attachment graph model considered by Berger et al \cite{BBCS}, see \cite{C}. 
}
\end{rema}

\begin{rema}
\label{tn}
\emph{Assume that on a sequence of graphs $(G_n)$, one can prove that w.h.p. $\tau_n\ge \varphi(n)$, 
for some function $\varphi(n)$, and that in the mean time we can prove \eqref{xivxi} for some $a_n\le \varphi(n)$. 
Then observe that if \eqref{etd} holds with $t_n=a_n$, then by using the self-duality, we can see that the same holds as well  
with $t_n=\varphi(n)$. In particular, in our setting, by using Theorem \ref{propexp}, we deduce that \eqref{etd} holds with  
$t_n = \exp(c n)$, for any $c<c_{\textrm{crit}}:=\liminf (1/n)\log \en(\tau_n)$, but (using again Theorem \ref{propexp}) it does not  
when $c > c_{\textrm{crit}}$. This argument also explains why the combination of the results in \cite{MVY} and \cite{MMVY} give the statement that was mentioned in the introduction for the case $a>2$.  
}
\end{rema}

Now to complete the proof of Theorem \ref{propexp} (i), it remains to show that the hypothesis of the proposition is well satisfied in our case, namely 
for the maximal connected component -- call it $G_n^0$ -- of the configuration model $G_n$. 
It amounts to show first that the size of all the other connected components is much smaller, to ensure that 
w.h.p. the extinction time on $G_n$ and on $G_n^0$ coincide. 
Remember that with Theorem \ref{td} we know that on $G_n$ it is w.h.p. larger than $\exp(cn)$. 
In the mean time we will show that the diameter of $G_n^0$ 
is $o(n)$. Since we could not find a reference, we provide a short proof here (in fact much more is true, see below).

For $v\in V_n$, we denote by $\kC(v)$ the connected component of $G_n$ containing $v$, 
and by $||\kC(v)||$ its number of edges. 
We also define 
$$d'_n := \max_{v\notin G_n^0} \ ||\kC(v)||.$$ 

\begin{lem} \label{dmg}
Let $G_n$ be the configuration model with $n$ vertices and degree distribution given either by \eqref{pnaj} or \eqref{paj}, 
with $a\in (1,2]$. 
Let $d_n=\textrm{diam}(G_n^0)$ be the maximal distance between pair of vertices in $G_n^0$.  
Then there exists a positive constant $C$, such that w.h.p.
\begin{eqnarray*}
\max(d_n,d'_n)  \leq 
\left\{ 
\begin{array}{ll}
 C & \textrm{when } 1<a<2 \\ 
 4 \log n/ \log \log n &\textrm{when } a=2.\\  
\end{array}
\right. 
\end{eqnarray*}
\end{lem}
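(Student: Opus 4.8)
The plan is, in each case, to exhibit a \emph{core} $\mathcal K\subseteq V_n$ that lies in $G_n^0$ and has uniformly bounded diameter, and then to control the distance of an arbitrary vertex of $G_n^0$ to $\mathcal K$ as well as the number of edges of the components avoiding $\mathcal K$. For $1<a<2$ and the distribution \eqref{pnaj} I would take $\mathcal K=E=\{v:D_v\ge n/2\}$: by Lemma~\ref{lta1}(i)--(iii) it has $\asymp n^{2-a}$ vertices, total degree $\sum_{v\in E}D_v\asymp L_n\asymp n^{3-a}$, it is a clique (hence of diameter $1$) and therefore contained in $G_n^0$. For \eqref{paj} with $a<2$ one uses instead the finitely many vertices of maximal degree via Lemma~\ref{2a<2}, and for $a=2$ one takes $\mathcal K=\{v:D_v\ge n/\log n\}\cup\{v:D_v\ge(\log n)^4\}$, which by Lemma~\ref{ltb1}(ii)--(iii) is nonempty, has total degree $\asymp L_n\asymp n\log n$, and has diameter $\le 2$. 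In each case $\sum_{v\in\mathcal K}D_v\ge p_0L_n$ for some $p_0>0$.

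Two elementary ingredients remain. First, a \textbf{union bound}: since a prescribed half-edge lands in $\mathcal K$ with probability $\ge p_0$, a fixed vertex of degree $\ge T$ fails to be adjacent to $\mathcal K$ with probability at most $(1-p_0)^T$; choosing $T=T^{(a)}_n=C\log n$ with $C$ large enough that $n(1-p_0)^{T}\to0$, one gets that w.h.p.\ \emph{every} vertex of degree $\ge T^{(a)}_n$ is within distance $1$ of $\mathcal K$, and in particular lies in $G_n^0$. Second, a \textbf{first-moment estimate on low-degree subgraphs}: using that a prescribed matching of a half-edge of $u$ to one of $v$ has probability $\asymp D_uD_v/L_n$, the expected number of connected subgraphs with $m$ edges all of whose vertices have degree $\le T$ is $\lesssim L_n^{2}\big(\sum_{v:D_v\le T}D_v^{2}\big)^{m-1}/L_n^{m}$; plugging in $\sum_{v:D_v\le T}D_v^{2}\asymp nT^{3-a}$ (case \eqref{pnaj}, $a<2$) or $\asymp nT$ (case $a=2$) together with $L_n\asymp n^{3-a}$ resp.\ $n\log n$, this tends to $0$ once $m$ exceeds a suitable threshold, provided $T=T^{(b)}_n$ is chosen small enough; Markov's inequality then rules out all such subgraphs w.h.p.

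Combining the two: a geodesic in $G_n^0$ from a vertex $v$ to $\mathcal K$ has all its internal vertices of degree $<T^{(a)}_n$ (an internal vertex of degree $\ge T^{(a)}_n$ would be adjacent to $\mathcal K$ and would shorten the geodesic), so if $T^{(a)}_n\le T^{(b)}_n$ the first-moment step forces that geodesic to be short; since $\mathcal K$ has bounded diameter, this bounds $d_n$. Likewise, any component avoiding $\mathcal K$ consists only of vertices of degree $<T^{(a)}_n\le T^{(b)}_n$, hence has few edges by the first-moment step, which bounds $d'_n$. For $1<a<2$ this goes through cleanly: the first-moment bound still decays for $T^{(b)}_n$ as large as a small power of $n$, so $T^{(a)}_n=C\log n\le T^{(b)}_n$ and both $d_n$ and $d'_n$ are bounded by a constant $C=C(a)$.

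The case $a=2$ is where I expect the real work to be, and the main obstacle. There the union bound only works for $T^{(a)}_n$ of order $\log n$, whereas the first-moment step yields the claimed $\log n/\log\log n$ scale only when $T^{(b)}_n$ is of constant order (or at any rate $o(\log n)$), so $T^{(a)}_n>T^{(b)}_n$ and a "medium-degree" range is left uncovered by the above dichotomy. One then has to bridge this range: either by running a breadth-first exploration out of $\mathcal K$ and noting that its frontier degree grows very fast --- landing on a vertex of small degree being rare when $a=2$ --- so that a vertex of degree $\ge(\log n)^4$, and hence $\mathcal K$ itself, is reached within $O(\log n/\log\log n)$ steps; or by an additional first-moment estimate tailored to chains of medium-degree vertices. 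Since the application only needs $\max(d_n,d'_n)=o(n)$, and the stated bound $4\log n/\log\log n$ is already far stronger than that, a somewhat lossy treatment of this range is acceptable.
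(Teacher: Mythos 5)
Your approach is genuinely different from the paper's, and the gap you flag for $a=2$ is exactly where it fails. The paper uses a single, more elementary mechanism. Set $F=\{v:D_v\ge(\log n)^4\}$ when $a=2$ (and $F=\{v:D_v\ge n^\chi\}$ with $2-a<\chi<1$ when $a<2$). The observation your argument never makes is that this core carries \emph{almost all} of the degree mass: $\sum_{v\notin F}D_v\asymp n\log\log n$ while $L_n\asymp n\log n$, so a uniformly matched stub lands \emph{outside} $F$ with vanishing probability $\asymp\log\log n/\log n$ (and with probability $\asymp n^{-(1-\chi)(2-a)}$ when $a<2$). Exploring the component $\kC(v)$ sequentially, each revealed half-edge misses $F$ with this $o(1)$ probability, so for any fixed $v$,
\[
\pn\bigl(d(v,F)>k\ \text{and}\ ||\kC(v)||>k\bigr)\le\left(\frac{C\log\log n}{\log n}\right)^{k},
\]
which is already $o(1/n)$ at $k=2\log n/\log\log n-1$. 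A union bound over $v$, together with $\pn(F\subset G_n^0)=1-o(1)$ (Lemma~\ref{ltb1}(iii),(v)) and $\mathrm{diam}(F)\le2$, controls $d_n$ and $d'_n$ in one stroke; no first-moment estimate on low-degree subgraphs is used.

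The reason your two-ingredient scheme breaks at $a=2$ is that your core $\mathcal K$ only satisfies $\sum_{\mathcal K}D_v\ge p_0L_n$ with $p_0$ bounded away from one, forcing the adjacency union-bound threshold $T^{(a)}_n$ up to order $\log n$, out of reach of the first-moment window. Taking $\mathcal K=F$ instead gives $p_0=1-o(1)$, so every half-edge already meets $\mathcal K$ with probability $1-o(1)$; the ``medium-degree'' range you worry about never arises, and the BFS refinement you sketch at the end (landing on a small-degree vertex is rare when $a=2$) is precisely this exploration argument, which replaces both of your ingredients at once. For $a<2$ your version does close, but even there the paper's is simpler: the miss probability is a negative power of $n$, so a bounded number of exploration steps already gives $d(v,F)\le C$ directly.
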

\begin{proof}
We only prove the result for $a=2$ here, the case $a<2$ being entirely similar. 
To fix ideas we also assume 
that the degree distribution is given by \eqref{pnaj}, but the proof works as well with \eqref{paj}. 
Set 
$$F=\left\{v: D_v\ge (\log n)^4\right\}.$$
Lemma \ref{ltb1} (iii) shows that w.h.p. all the elements of $F$ are in the same connected component, and 
Lemma \ref{ltb1} (v) then shows that w.h.p. this component has size $n(1-o(1)$, in particular it is the maximal connected component. In conclusion we get  
\begin{equation}
\label{F}
\pn(F\subset G_n^0) = 1-o(1).
\end{equation} 
Now let 
$$R_n: = \sum_{v\in F} D_v.$$ 
By construction, the probability that any stub incident to some vertex $v\notin F$ is matched with a stub incident to a vertex 
lying in $F$ is equal to $R_n/(L_n-1)$. By iterating this argument, we get 
$$\pn \left(d(v,F)>k \textrm{ and }||\kC(v)|| > k  \mid (D_w)_{w \in V_n}\right) \leq \frac{R_n}{L_n-1} \frac{R_n}{L_n-3}\cdots \frac{R_n}{L_n-2k+1},$$
for any $k$, where $d(v,F)$ denotes the graph distance between $v$ and $F$ 
(which by convention we take infinite when there is no element of $F$ in $\kC(v)$).  
Then it follows from Lemma \ref{ltb1} (i) and the fact that $R_n\asymp n\log \log n$, that 
$$\pn \left(d(v,F) > k_n  \textrm{ and }||\kC(v)|| > k_n \right) \leq \left(\frac{C \log \log n}{ \log n} \right)^{ 2\log n / \log \log n-1} = o(n^{-1}),$$
for some constant $C>0$, with $k_n=2\log n/( \log \log n) -1$. 
This proves the lemma, using a union bound and \eqref{F}.  
\end{proof}

To complete the proof of Part (i) of the theorem, we just need to remember that on any graph with $k$ edges, and for any 
$t\ge 1$, 
the extinction time is bounded by $2t$ with probability at least $1-(1-\exp(-Ck))^t$ (since on each time interval of length $1$ it has probability at least $\exp(-Ck)$ to die out, for some constant $C>0$, independently of the past). 
Therefore the previous lemma shows that w.h.p. the extinction time on $G_n^0$ and on $G_n$ are equal, 
as was announced just above the previous lemma. Then Part (i) of the theorem follows with Proposition \ref{pcel}.

\section{Extension to more general degree distributions} 
\label{secext}
We present here some rather straightforward extensions of our results to more general degree distributions.

A first one, which was also considered in \cite{VVHZ}, is to take distributions 
which interpolate between \eqref{pnaj} and \eqref{paj}: for any fixed $\alpha \in [1,\infty]$, define 
$$p_{n,a,\alpha}(j):= c_{n,a,\alpha} \, j^{-a} \qquad \textrm{for all }1\le j\le n^\alpha,$$
where $(c_{n,a,\alpha})$ are normalizing constants, 
and with the convention that the case $\alpha = \infty$ corresponds to the distribution given by \eqref{paj}.

It turns out that if $a <2$ and $\alpha < 1/(a-1)$, one can use exactly the same proof  as in the case $\alpha =1$. 
When $\alpha > 1/(a-1)$, using that w.h.p. all vertices have degree smaller than $n^{1/(a-1)} \log \log n$, 
one can use the same proof as in the case $\alpha = \infty$. The case $\alpha = 1/(a-1)$ is more complicated, 
and as in \cite{VVHZ}, a proof would require a more careful look at it.

When $a =2$, using that w.h.p. all vertices have degree smaller than $n \log \log n$, one can see that the same proof applies for any $\alpha>1$.

\vspace{0.2cm}
Another extension is to assume that there exist positive constants $c$ and $C$, and some fixed $m\ge 1$, such that for any  vertex $v$, 
$$c j^{-a} \leq \pn(D_v=j) \leq C j^{-a} \qquad \textrm{for }m \le j\le n^\alpha,$$
say with $\alpha = 1$, but it would work with $\alpha=\infty$ as well. 
The only minor change in this case is in the proof of Lemma \ref{q}. 
But one can argue as follows: just replace the set $A_1$ 
by the set of vertices in $A_m$ whose first $m-1$ stubs are not connected to any of the vertices in $E$. 
By definition these vertices have at most one neighbor in $E$ and moreover, it is not difficult to see that 
this set also has w.h.p. a size of order $n$. Then the rest of the proof applies, mutadis mutandis. All other arguments in the proof of Theorem \ref{td} remain unchanged. Therefore in this case we obtain that:
$$\frac{|\xi^{V_n}_{t_n}|}{n} - \rho_{n,a}(\lambda) \xrightarrow{ \,\,  (\pn) \, \, \, } 0,$$
with $\rho_{n,a}(\lambda)$ as in \eqref{lm}. Theorem \ref{propexp} remains also valid in this setting.  
 
\vspace{0.2cm}
\noindent \textbf{Acknowledgments:} We thank Daniel Valesin for pointing out a gap in the proof of a previous version of Proposition 6.2.

\end{document}